\long\def\alert#1{\parindent2em\smallskip\hbox to\hsize%?
{\hskip\parindent\vrule%?
\vbox{\advance\hsize-2\parindent\hrule\smallskip\parindent.4\parindent%?
\narrower\noindent#1\smallskip\hrule}\vrule\hfill}\smallskip\parindent0pt}
 \newtheorem{thm}{Theorem}[section]
\newtheorem{cor}[thm]{Corollary}
 \newtheorem{lem}[thm]{Lemma}
 \newtheorem{prop}[thm]{Proposition}
\theoremstyle{definition}
 \newtheorem{defn}[thm]{Definition}
\theoremstyle{remark}
 \numberwithin{equation}{section}
\begin{document}

\title[$c$-nilpotent multiplier of finite  $p$-groups] {$c$-nilpotent multiplier of finite  $p$-groups}
\author[F. Johari]{Farangis Johari}
\author[M. Parvizi]{Mohsen Parvizi}
\author[P. Niroomand]{Peyman Niroomand}

\address{Department of Pure Mathematics\\
Ferdowsi University of Mashhad, Mashhad, Iran}
\email{farangisjohary@yahoo.com}

\address{Department of Pure Mathematics \\
Ferdowsi University of Mashhad, Mashhad, Iran}
\email{parvizi@um.ac.ir}

\address{School of Mathematics and Computer Science \\
Damghan University, Damghan, Iran}
\email{niroomand@du.ac.ir, p$\_$niroomand@yahoo.com}
\thanks{\textit{Mathematics Subject Classification 2010.} Primary 20C25; Secondary 20D15.}
\thanks{This research was supported by a grant from Ferdowsi University of Mashhad-Graduate Studys ( No. 31659).}

\keywords{$c$-Nilpotent multiplier, Capability, $p$-Groups}

\date{\today}

%\dedicatory{}?

%?
\begin{abstract}
The aim of this work is to find some exact sequences on the $c$-nilpotent multiplier of a group $G$.
We also give an upper bound for the $c$-nilpotent multiplier of finite $p$-groups and give the explicit structure of groups whose take  the upper bound. Finally, we will get the exact structure of the $c$-nilpotent multiplier and determine $c$-capable groups in the class of extra-special and generalized extra-special $p$-groups.
It lets us  to have a vast improvement over the last results on this topic.
\end{abstract}

%%% ----------------------------------------------------------------------
\maketitle
%%% ------
\section{introduction and motivation}
Let $G$ be a  group presented as the quotient $F/R$ of a free group $F$
by a normal subgroup $R.$ The  $c$-nilpotent multiplier of $G$ is defined as
\[\mathcal{M}^{(c)}(G)=\dfrac{\gamma_{c+1}(F)\cap R}{\gamma_{c+1}(R,F)}\]is the Baer invariant of $G$ with respect to the variety of nilpotent groups of class
at most $c$ in which $\gamma_{c+1}(F )$ is the $(c + 1)$-th term of the lower central series of $F$ and $\gamma_1(R, F ) = R, \gamma_{c+1}(R, F ) =
[\gamma_c(R, F ), F ]$, inductively.
The $c$-nilpotent multiplier of $G$ was born in the work of Baer \cite{1}(see also \cite{6, 6A, 11A, 18A} for more details in this topic). It is readily verified that the $c$-nilpotent multiplier $\mathcal{M}^{(c)}(G)$ is abelian and independent of the choice of the free presentation of $G$ (see \cite{green}).
The $1$-nilpotent multiplier of $G$ is denoted by
$\mathcal{M}(G)$ and it is called the Schur multiplier of $G.$ There are many stories involving this
concept and it can be found for instance in
\cite{2, 9, 11, 16, 17, 18, 25, 26, 27}.
The main reason to study the $c$-nilpotent multiplier comes from the
isologism theory of P. Hall \cite{14,15}. It is an instrument for classification of groups into isologism classes. Some results about the Baer invariant can be found in \cite{6, 6A, 11A, 18A, 21, 24, 22, 23}.

The aim of paper is to obtain some more inequalities on the $c$-nilpotent multiplier of a finite $p$-group $G$. For instance, among the results for non-abelian $p$-groups, we give a vast improvement over the result due to Moghaddam \cite{24, 23}. The results also gives an extension of the results obtained recently in \cite{29A}. In the class of extra-special $p$-groups and generalized extra-special $p$-groups, we characterize the explicit structure of the $c$-nilpotent multiplier.
In the other direction, it seems finding a suitable upper bound may be useful to know more about
the $c$-nilpotent multiplier and $c$-capability of groups. It is shown in \cite{22, 23} that for a $p$-group $G$ of order $p^n$,
\[ |\mathcal{M} ^{(c)}(G)| |\gamma_{c+1} (G)|
 \leq |\mathcal{M}^{(c)}(\mathbb{Z}_{p}^{(n)}) |,\] where $\mathbb{Z}_{p}^{(n)}=\underbrace{\mathbb{Z}_{p}\oplus \mathbb{Z}_{p}\oplus \cdots \oplus \mathbb{Z}_{p}}_\text{n-times}.$\\
Using this inequality and Corollary \ref{8} we have $| \mathcal{M} ^{(c)}(G)|  $ is bounded by $ p^{\chi _{c+1}(n-1)},$ where $c\geq 2$. We will show that
the bound is attained exactly when $G$ is elementary abelian similar to the result of \cite[Corollary 2]{2} due to Berkovich. Although we will reduce the upper bound as much as
possible in the case of non-abelian $p$-groups, and as a result we will generalize the work
of Berkovich \cite{2} to the $c$-nilpotent multipliers of finite $p$-groups.
Finally   we are able to identify which of generalized extra-special $p$-groups
are $c$-capable. It also generalizes the result about the capability and $2$-capability of extra-special $p$-groups in \cite{3, 29A}.
\section{Preliminaries}

In this section, we state the concepts and results which will be used in the next sections.\\
{\bf Notation}.
We use techniques involving the concept of basic commutators. Here is the definition.
Let $X$ be an arbitrary subset of a free group, and select an arbitrary total order for $X$. The basic commutators on $X$, their weight $wt$, and the order among them are defined as follows:
\begin{itemize}
\item[(i)]The elements of $X$ are basic commutators of weight one, ordered according to the total order previously chosen.
\item[(ii)]Having defined the basic commutators of weight less than $n$, a basic commutator of weight $n$ is $c_k=[c_i,c_j]$, where:
\begin{itemize}
\item[(a).]$c_i$ and $c_j$ are basic commutators and $wt(c_i)+wt(c_j)=n$, and
\item[(b).]$c_i>c_j$, and if $c_i=[c_s,c_t]$, then $c_j \geq c_t$.
\end{itemize}
\item[(iii)] The basic commutators of weight $n$ follow those of weight less than $n$. The basic commutators of weight $n$ are ordered among themselves in any total order, but the most common used total order is lexicographic order; that is, if $[b_1,a_1]$ and $[b_2,a_2]$ are basic commutators of weight $n$. Then $[b_1,a_1]<[b_2,a_2]$ if and only if  $b_1<b_2$ or $b_1=b_2$ and $a_1<a_2$.
\end{itemize}

The number of basic commutators is given in the following:

\begin{thm}
$($Witt Formula$)$. The number of basic commutators of weight $n$ on $d$
generators is given by the following formula:
$$ \chi_n(d)=\frac {1}{n} \sum_{m|n}^{} \mu (m)d^{n/m},$$
where $\mu (m)$ is the M\"{o}bius function, which is defined to be
   \[ \mu (m)=\left \{ \begin{array}{ll}
      1 & \textrm{if}\ \ m=1, \\ 0 &  \textrm{if} \ \ m=p_1^{\alpha_1}\ldots
p_k^{\alpha_k}\ \ \exists \alpha_i>1, \\ (-1)^s & \textrm{if} \ \
m=p_1\ldots p_s,
\end{array} \right.  \] where the $p_i$, $1\leq i\leq k$, are the distinct primes dividing
$m$.
\end{thm}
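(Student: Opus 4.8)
The plan is the classical one: identify $\chi_{n}(d)$ with the rank of the degree-$n$ homogeneous component of the free Lie ring on $d$ generators, read off that rank from a generating-function identity coming from the Poincar\'e--Birkhoff--Witt theorem, and finish with a M\"{o}bius inversion.

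First I would invoke the Hall basis theorem: the basic commutators of weight $n$ on a set $X$ with $|X|=d$ form a free generating set of $\gamma_{n}(F)/\gamma_{n+1}(F)$, where $F$ is the free group on $X$. Combined with the Magnus--Witt identification of $\bigoplus_{n\ge1}\gamma_{n}(F)/\gamma_{n+1}(F)$, under the commutator bracket, with the free Lie ring $L$ on $X$, this shows that $\chi_{n}(d)$, the number of basic commutators of weight $n$, equals $\operatorname{rank}_{\mathbb{Z}}L_{n}$. Hence it suffices to compute $\ell_{n}:=\operatorname{rank}_{\mathbb{Z}}L_{n}=\dim_{\mathbb{Q}}(L_{n}\otimes\mathbb{Q})$.

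The key input is the PBW theorem. The universal enveloping algebra of the free Lie algebra on $d$ generators is the free associative algebra $T$ on those generators, which is graded with $\dim T_{n}=d^{n}$, so its Hilbert series is $\sum_{n\ge0}d^{n}t^{n}=1/(1-dt)$. By PBW, after choosing an ordered homogeneous basis of $L\otimes\mathbb{Q}$ having $\ell_{n}$ elements in degree $n$, the non-decreasing monomials in that basis form a vector-space basis of $T$; counting them by total degree gives
\[
\frac{1}{1-dt}=\prod_{n\ge1}\frac{1}{(1-t^{n})^{\ell_{n}}}.
\]
Taking $-\log$ of both sides and expanding, $\sum_{m\ge1}\frac{d^{m}}{m}t^{m}=\sum_{n\ge1}\sum_{k\ge1}\frac{\ell_{n}}{k}t^{nk}$; comparing the coefficient of $t^{N}$ gives $d^{N}=\sum_{n\mid N}n\,\ell_{n}$, and M\"{o}bius inversion yields $n\,\ell_{n}=\sum_{m\mid n}\mu(m)d^{n/m}$, that is, $\chi_{n}(d)=\ell_{n}=\frac{1}{n}\sum_{m\mid n}\mu(m)d^{n/m}$.

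The only step carrying real content is the first; the rest is a formal power-series manipulation. An alternative that avoids Lie algebras entirely is to set up a weight-preserving bijection between basic commutators of weight $n$ and Lyndon words of length $n$ on a $d$-letter alphabet (essentially the Hall construction again), and then count Lyndon words: the Chen--Fox--Lyndon unique factorization of words into non-increasing products of Lyndon words gives the same identity $1/(1-dt)=\prod_{n\ge1}(1-t^{n})^{-L_{n}}$ with $L_{n}$ the number of Lyndon words of length $n$, whence $L_{n}=\ell_{n}=\chi_{n}(d)$. In either version I expect the main obstacle to be exactly this combinatorial basis-or-bijection result; once the basic commutators of weight $n$ are known to be the correct counting object for $L_{n}$, the identity drops out of manipulating $1/(1-dt)$ with the M\"{o}bius function.
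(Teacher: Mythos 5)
The paper states the Witt formula as a classical fact and gives no proof of it, so there is nothing internal to compare your argument against. Your argument is the standard and correct one: Theorem 2.2 of the paper (the Hall basis theorem, which you also invoke) identifies $\chi_n(d)$ with the rank of $\gamma_n(F)/\gamma_{n+1}(F)$, hence with the rank of the degree-$n$ part of the free Lie ring, and the PBW/Hilbert-series identity $1/(1-dt)=\prod_{n\ge 1}(1-t^n)^{-\ell_n}$ followed by logarithmic differentiation-style coefficient comparison and M\"obius inversion is exactly how Witt's formula is classically derived; your Lyndon-word alternative is equally valid. The only point worth flagging is that the combinatorial input (Hall basis or Chen--Fox--Lyndon factorization) must be established independently of the Witt count to avoid circularity, which it is in the standard references, so your proof is complete as written.
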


\begin{thm}\label{hh}
$($\cite[M. Hall]{m} and \cite[Theorem 11.15(a)]{Hu}$)$
Let $F$ be a free group on $\{x_1, x_2, \ldots, x_d\}.$ Then for all $i$, $1 \leq i \leq n$, \[\dfrac{\gamma_n(F)}{\gamma_{n+i}(F )}\] is a free abelian group freely generated by the basic commutators of weights $n, n + 1, \ldots, n + i - 1$ on the letters $\{x_1, x_2, \ldots, x_d\}.$
\end{thm}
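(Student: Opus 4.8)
The plan is to establish the two assertions of the theorem, ``generated by'' and ``freely generated by'', separately, and to deduce the general case from the case $i=1$ by induction on $i$. For generation I would invoke Philip Hall's \emph{collecting process}. By definition every element of $\gamma_{n}(F)$ is a product of conjugates of left-normed commutators of weight $\geq n$ in the $x_{j}$; repeatedly applying the identities $[ab,c]=[a,c]^{b}[b,c]$, $[a,bc]=[a,c][a,b]^{c}$ and the Hall--Witt identity one collects basic commutators of smallest weight to the left, each collection step producing only ``error'' commutators of strictly larger weight. Reducing modulo $\gamma_{n+i}(F)$ annihilates every factor of weight $\geq n+i$, so the process terminates and exhibits the given element as a product of basic commutators of weights $n,n+1,\dots,n+i-1$; in particular $\gamma_{n}(F)/\gamma_{n+1}(F)$ is generated by the basic commutators of weight exactly $n$.

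For freeness I would pass to the Magnus embedding. Let $A=\mathbb{Z}\langle\langle X_{1},\dots,X_{d}\rangle\rangle$ be the ring of formal power series in $d$ non-commuting indeterminates, $\mathfrak{a}=(X_{1},\dots,X_{d})$ its augmentation ideal, and embed $F$ into the group of units $1+\mathfrak{a}$ by $x_{j}\mapsto 1+X_{j}$ (injectivity is Magnus's theorem). From the congruence $[1+u,1+v]\equiv 1+(uv-vu)\pmod{\mathfrak{a}^{k'+k''+1}}$ for $u\in\mathfrak{a}^{k'}$, $v\in\mathfrak{a}^{k''}$, a short induction yields $\gamma_{k}(F)\subseteq 1+\mathfrak{a}^{k}$, and hence a well-defined homomorphism
\[
\lambda_{k}\colon\ \gamma_{k}(F)/\gamma_{k+1}(F)\longrightarrow \mathfrak{a}^{k}/\mathfrak{a}^{k+1},
\qquad g\,\gamma_{k+1}(F)\longmapsto (g-1)+\mathfrak{a}^{k+1},
\]
where $\mathfrak{a}^{k}/\mathfrak{a}^{k+1}$, the degree-$k$ homogeneous component of the free associative ring on $X_{1},\dots,X_{d}$, is free abelian of rank $d^{k}$. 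Feeding the commutator congruence through the recursive definition of basic commutators shows that $\lambda_{n}$ carries a basic commutator of weight $n$ in the $x_{j}$ to the same bracket arrangement in the $X_{j}$ with each group commutator replaced by the ring commutator, i.e.\ to the corresponding element of the degree-$n$ component $L_{n}$ of the free Lie ring $L=\bigoplus_{m}L_{m}$ generated by $X_{1},\dots,X_{d}$ inside $(A,[\,\cdot\,,\cdot\,])$.

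The crux is the purely Lie-theoretic fact that the images under $\lambda_{n}$ of the basic commutators of weight $n$ are $\mathbb{Z}$-linearly independent in $L_{n}$ and span it, and that $L_{n}$ is free abelian of the rank $\chi_{n}(d)$ given by the Witt formula; this is M.\ Hall's basis theorem for free Lie rings, which one may cite or derive from the Poincar\'e--Birkhoff--Witt theorem applied to $A=U(L)$ (the rank being the M\"obius inversion of the Hilbert-series identity $(1-dt)^{-1}=\prod_{m\geq 1}(1-t^{m})^{-\mathrm{rk}\,L_{m}}$, which reproduces $\chi_{n}(d)$). Granting this, the case $i=1$ is immediate: the basic commutators of weight $n$ generate $\gamma_{n}(F)/\gamma_{n+1}(F)$ by the first paragraph, and $\lambda_{n}$ sends them to $\mathbb{Z}$-independent elements, so they are independent in the group as well, hence a free basis, and $\lambda_{n}$ is an isomorphism onto $L_{n}$. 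For the general case I would induct on $i$ via the exact sequence
\[
1\longrightarrow \gamma_{n+i}(F)/\gamma_{n+i+1}(F)\longrightarrow \gamma_{n}(F)/\gamma_{n+i+1}(F)\longrightarrow \gamma_{n}(F)/\gamma_{n+i}(F)\longrightarrow 1:
\]
the quotient is free abelian by the inductive hypothesis, hence projective, so the sequence splits, and combining a basis of the kernel (the weight-$(n+i)$ basic commutators, by the case $i=1$ applied with $n$ replaced by $n+i$) with lifts of a basis of the quotient, together with the generation statement, identifies $\gamma_{n}(F)/\gamma_{n+i+1}(F)$ as free abelian on the basic commutators of weights $n,\dots,n+i$. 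The main obstacle is exactly the Hall basis theorem for the free Lie ring invoked in the crux step; everything surrounding it is formal manipulation with commutator identities and the $\mathfrak{a}$-adic filtration.
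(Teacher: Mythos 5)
The paper does not prove this statement at all: Theorem \ref{hh} is imported verbatim from M.~Hall's book and from Huppert--Blackburn, so there is no proof of record to compare against. Your outline is the standard proof found in precisely those references, and it is correct: the collecting process gives generation of $\gamma_n(F)/\gamma_{n+i}(F)$ by the basic commutators of weights $n,\dots,n+i-1$; the Magnus embedding $x_j\mapsto 1+X_j$ and the congruence $[1+u,1+v]\equiv 1+(uv-vu)$ reduce linear independence to the independence of the corresponding basic Lie monomials in the free Lie ring; and the splitting of the exact sequence of free abelian groups handles the passage from $i=1$ to general $i$ (your implicit lemma, that a basis of the kernel together with any lifts of a basis of the quotient is a basis of the middle term, is correct). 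The one step you defer rather than prove --- the Hall basis theorem for free Lie rings, or equivalently the $\mathbb{Z}$-independence of the images of the weight-$n$ basic commutators in $\mathfrak{a}^n/\mathfrak{a}^{n+1}$ --- is indeed the crux, and citing it (or deriving it from PBW plus the Witt formula via the Hilbert-series identity) is a reasonable stopping point given that the theorem itself is a citation in this paper.
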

Recall that from \cite{6} if $N $ is  a normal subgroup of $G,$ then we can form the non-abelian tensor product $ (N \otimes G) \otimes G.$ Therefore $ N\otimes_c G $ is defined recursively by $N\otimes_0 G = N$ and $N\otimes_{c+1}G = (N\otimes_{c}G)\otimes G$. Moreover, the exterior product $ N\wedge_c G $ is defined recursively by $N\wedge_0 G = N$ and $N\wedge_{c+1}G = (N\wedge_{c}G)\wedge G$.\\
Here we  give some results concerning  $\mathcal{M}^{(c)}(G)$ which are
used in the proof of the main results. This result are proved in \cite{22} for any variety, but we need them only for the variety of nilpotent groups of class at most $c$.

\begin{thm}\label{1}\cite[Lemma 2.2]{22}
Let $G$ be a finite group, $B\unlhd G$ and $A=G/B$. Then there exists a finite group $L$ with a normal subgroup $M$ such that
\begin{itemize}
\item[\textnormal{(i)}]$\gamma_{c+1} (G) \cap B\cong L/M$,
\item[\textnormal{(ii)}]$M \cong \mathcal{M} ^{(c)} (G)$,
\item[\textnormal{(iii)}]$\mathcal{M} ^{(c)} (A)$ is a homomorphic image of $L$.
\end{itemize}
\end{thm}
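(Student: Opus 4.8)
The plan is to work with a fixed free presentation and exhibit $L$ and $M$ as explicit sections of the free group. Choose a free presentation $G=F/R$. Since $B\unlhd G$, lift $B$ to a normal subgroup $S\unlhd F$ with $R\leq S$ and $B=S/R$; then $A=G/B\cong F/S$ is a free presentation of $A$. Put
\[
L=\frac{\gamma_{c+1}(F)\cap S}{\gamma_{c+1}(R,F)},\qquad
M=\frac{\gamma_{c+1}(F)\cap R}{\gamma_{c+1}(R,F)}.
\]
An easy induction shows $\gamma_{c+1}(R,F)\unlhd F$ and $\gamma_{c+1}(R,F)\leq \gamma_{c+1}(F)\cap R\leq \gamma_{c+1}(F)\cap S$, so both quotients are legitimate and $M\unlhd L$. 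By the very definition of the $c$-nilpotent multiplier applied to $G=F/R$ we have $M=\mathcal{M}^{(c)}(G)$, which is (ii).

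For (iii), since $R\leq S$ one gets $\gamma_{c+1}(R,F)\leq\gamma_{c+1}(S,F)$ by the same inductive argument, so the identity map on $\gamma_{c+1}(F)\cap S$ induces a surjection
\[
L=\frac{\gamma_{c+1}(F)\cap S}{\gamma_{c+1}(R,F)}\twoheadrightarrow\frac{\gamma_{c+1}(F)\cap S}{\gamma_{c+1}(S,F)}=\mathcal{M}^{(c)}(A),
\]
the last equality being the definition of $\mathcal{M}^{(c)}$ for the free presentation $A=F/S$ (legitimate by the presentation-independence recalled in the introduction). For (i), note $\gamma_{c+1}(G)=\gamma_{c+1}(F)R/R$, hence
\[
\gamma_{c+1}(G)\cap B=\frac{\gamma_{c+1}(F)R}{R}\cap\frac{S}{R}=\frac{\gamma_{c+1}(F)R\cap S}{R}=\frac{(\gamma_{c+1}(F)\cap S)R}{R},
\]
where the last step is Dedekind's modular law using $R\leq S$. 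Therefore $\gamma_{c+1}(G)\cap B\cong(\gamma_{c+1}(F)\cap S)/(\gamma_{c+1}(F)\cap R)=L/M$, since $(\gamma_{c+1}(F)\cap S)\cap R=\gamma_{c+1}(F)\cap R$. Finiteness of $L$ follows because $L/M\cong\gamma_{c+1}(G)\cap B$ is a subgroup of the finite group $G$ while $M=\mathcal{M}^{(c)}(G)$ is finite (the $c$-nilpotent multiplier of a finite group is finite), so $L$ is an extension of a finite group by a finite group.

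I do not expect a serious obstacle here: the content is essentially bookkeeping inside the free group $F$. The points demanding care are (a) checking that all the subgroups $\gamma_{c+1}(R,F),\gamma_{c+1}(S,F)$ and their intersections with $\gamma_{c+1}(F)$ are normal where normality is used, so that every quotient and every induced homomorphism above is well defined, and (b) the correct application of the modular law and of the inclusions $\gamma_{c+1}(R,F)\leq\gamma_{c+1}(F)\cap R$ and $\gamma_{c+1}(R,F)\leq\gamma_{c+1}(S,F)$, which are themselves short inductions on $c$.
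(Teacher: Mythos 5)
Your construction is correct, and it is precisely the one the paper relies on (via \cite[Lemma 2.2]{22}): taking $L=\bigl(\gamma_{c+1}(F)\cap S\bigr)/\gamma_{c+1}(R,F)$ and $M=\bigl(\gamma_{c+1}(F)\cap R\bigr)/\gamma_{c+1}(R,F)$ yields exactly the identity $\big{|}\mathcal{M}^{(c)}(G)\big{|}\,\big{|}\gamma_{c+1}(G)\cap B\big{|}=\big{|}\mathcal{M}^{(c)}(A)\big{|}\,\big{|}\gamma_{c+1}(S,F)/\gamma_{c+1}(R,F)\big{|}$ that Corollary \ref{2} and Theorem \ref{4} extract from ``Theorem \ref{1} and its proof.'' All the supporting inclusions, the modular-law step, and the finiteness argument check out, so this is essentially the same approach and nothing is missing.
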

%\begin{proof}
%Let $1\rightarrow R \rightarrow F\rightarrow G \rightarrow 1$ be a free presentation of $G$ and suppose $B=S/R$ so that $A=G/B\cong F/S$. Now, set $M =\dfrac{\gamma_{c+1}(F)\cap R}{\gamma_{c+1}(R,F)}$ and
% $L=\dfrac{\gamma_{c+1}(F)\cap S}{\gamma_{c+1}(R,F)}$. Then
% \begin{align*}
%  \gamma_{c+1}(G)\cap B&= \dfrac{\big{(}\gamma_{c+1}(F)R\big{)}\cap S}{R}=\dfrac{\big{(}\gamma_{c+1}(F)\cap S\big{)}~R}{R}\\
%  &\cong \dfrac{\gamma_{c+1}(F)\cap S}{\gamma_{c+1}(F) \cap R} \cong \dfrac{\dfrac{\gamma_{c+1}(F)\cap S}{\gamma_{c+1}(R,F)}}
%  {\dfrac{\gamma_{c+1}(F)\cap R}{\gamma_{c+1}(R,F)}}=\dfrac{L}{M}.
% \end{align*}
%By definition, $\mathcal{M}^{(c)}(G)\cong M$. Now we have
% \begin{align*}
% \mathcal{M}^{(c)}(A)= \dfrac{\gamma_{c+1}(F)\cap S}{\gamma_{c+1}(S,F)}
%  \cong \dfrac{\dfrac{\gamma_{c+1}(F)\cap S}{\gamma_{c+1}(R,F)}}
%  {\dfrac{\gamma_{c+1}(S,F)}{\gamma_{c+1}(R,F)}}\cong \dfrac{L}{\dfrac{\gamma_{c+1}(S,F)}{\gamma_{c+1}(R,F)}}.
% \end{align*}
% Therefore, $\mathcal{M}^{(c)}(A)$ is a homomorphic image  of $L$ under some homomorphism whose kernel is $\dfrac{\gamma_{c+1}(S,F)}{\gamma_{c+1}(R,F)}$.
%\end{proof}
\begin{cor}\label{2}
Let $G$ be a finite group, $B\unlhd G$ and $A=G/B$. Then \[\big{|}\mathcal{M}^{(c)}(A)\big{|} ~\text{divides}~
\dfrac{\big{|}\mathcal{M}^{(c)}(G)\big{|}~\big{|}\gamma_{c+1}(G)\cap B\big{|}}{\big{|}\gamma_{c+1}(B,G)\big{|}}.\]
\end{cor}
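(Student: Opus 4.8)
The plan is to go back to the construction behind Theorem \ref{1}. Write $G = F/R$ with $F$ free and $B = S/R$ for a normal subgroup $R \leq S \unlhd F$, so that $A = F/S$. In the proof of \cite[Lemma 2.2]{22} the relevant pair is $M = (R\cap\gamma_{c+1}(F))/\gamma_{c+1}(R,F) \cong \mathcal{M}^{(c)}(G)$ sitting inside $L = (S\cap\gamma_{c+1}(F))/\gamma_{c+1}(R,F)$: property (i) is $L/M \cong (S\cap\gamma_{c+1}(F))/(R\cap\gamma_{c+1}(F)) \cong \gamma_{c+1}(G)\cap B$, while property (iii) is realized by the canonical epimorphism $\pi\colon L \twoheadrightarrow (S\cap\gamma_{c+1}(F))/\gamma_{c+1}(S,F) = \mathcal{M}^{(c)}(A)$, which makes sense because $\gamma_{c+1}(R,F)\leq\gamma_{c+1}(S,F)$. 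Since $\gamma_{c+1}(S,F)\leq S\cap\gamma_{c+1}(F)$, one reads off $\ker\pi = \gamma_{c+1}(S,F)/\gamma_{c+1}(R,F)$, a finite group. Counting orders in the extensions $1\to M\to L\to L/M\to 1$ and $1\to\ker\pi\to L\to\mathcal{M}^{(c)}(A)\to 1$ yields
\[
|\mathcal{M}^{(c)}(A)|\cdot|\ker\pi| = |L| = |\mathcal{M}^{(c)}(G)|\cdot|\gamma_{c+1}(G)\cap B|,
\]
so the corollary reduces to showing that $|\gamma_{c+1}(B,G)|$ divides $|\ker\pi| = [\gamma_{c+1}(S,F):\gamma_{c+1}(R,F)]$.

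For that I would first check, by induction on $c$ (base case $\gamma_1(B,G)=B$; inductive step using $[\gamma_c(S,F)R,\,F] = \gamma_{c+1}(S,F)[R,F]$ together with $[R,F]\leq R$), that the relative lower central series of $B$ in $G$ is the image of the corresponding series in $F$, namely $\gamma_{c+1}(B,G) = \gamma_{c+1}(S,F)R/R$. Hence $|\gamma_{c+1}(B,G)| = [\gamma_{c+1}(S,F):\gamma_{c+1}(S,F)\cap R]$. Because $\gamma_{c+1}(R,F)\leq R$ and $\gamma_{c+1}(R,F)\leq\gamma_{c+1}(S,F)$, we get the chain $\gamma_{c+1}(R,F)\leq\gamma_{c+1}(S,F)\cap R\leq\gamma_{c+1}(S,F)$, so $[\gamma_{c+1}(S,F):\gamma_{c+1}(S,F)\cap R]$ divides $[\gamma_{c+1}(S,F):\gamma_{c+1}(R,F)] = |\ker\pi|$; that is, $|\gamma_{c+1}(B,G)|$ divides $|\ker\pi|$. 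Substituting into the displayed identity gives $|\mathcal{M}^{(c)}(A)| = \frac{|\mathcal{M}^{(c)}(G)|\,|\gamma_{c+1}(G)\cap B|}{|\ker\pi|}$, which divides $\frac{|\mathcal{M}^{(c)}(G)|\,|\gamma_{c+1}(G)\cap B|}{|\gamma_{c+1}(B,G)|}$, as required.

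The only genuinely delicate steps are the two subgroup identifications over the free group: that $\ker\pi$ really equals $\gamma_{c+1}(S,F)/\gamma_{c+1}(R,F)$ inside $L$, and that $\gamma_{c+1}(B,G) = \gamma_{c+1}(S,F)R/R$; both reduce to careful bookkeeping with the nested normal subgroups $\gamma_{c+1}(R,F)\leq\gamma_{c+1}(S,F)\cap R\leq\gamma_{c+1}(S,F)\leq S\cap\gamma_{c+1}(F)$ of $F$ and with the identity $[AB,C]=[A,C][B,C]$ for $A,B\unlhd F$. I note that using Theorem \ref{1} purely as a black box would yield only the weaker statement that $|\mathcal{M}^{(c)}(A)|$ divides $|\mathcal{M}^{(c)}(G)|\,|\gamma_{c+1}(G)\cap B|$; obtaining the sharper denominator $|\gamma_{c+1}(B,G)|$ forces one to look inside the construction (equivalently, to invoke the five-term exact sequence for Baer invariants).
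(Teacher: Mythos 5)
Your proposal is correct and follows essentially the same route as the paper: both derive the identity $|\mathcal{M}^{(c)}(A)|\,\bigl[\gamma_{c+1}(S,F):\gamma_{c+1}(R,F)\bigr]=|\mathcal{M}^{(c)}(G)|\,|\gamma_{c+1}(G)\cap B|$ from the construction underlying Theorem \ref{1}, and then factor $\bigl[\gamma_{c+1}(S,F):\gamma_{c+1}(R,F)\bigr]$ through $R\cap\gamma_{c+1}(S,F)$ to recognize $|\gamma_{c+1}(B,G)|=\bigl[\gamma_{c+1}(S,F):R\cap\gamma_{c+1}(S,F)\bigr]$ as a divisor. Your extra care in identifying $\ker\pi$ and in proving $\gamma_{c+1}(B,G)=\gamma_{c+1}(S,F)R/R$ by induction only makes explicit what the paper asserts directly.
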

\begin{proof}
Let $1\rightarrow R \rightarrow F\rightarrow G \rightarrow 1$ be a free presentation of $G$ and suppose $B=S/R$ so that $A=G/B\cong F/S$. By Theorem \ref{1}, we have
\[
\big{|}\mathcal{M}^{(c)}(A)\big{|}~\Bigg{|}\dfrac{\gamma_{c+1}(S,F)}{\gamma_{c+1}(R,F)}\Bigg{|}=
\big{|}\mathcal{M}^{(c)}(G)\big{|}~\big{|}\gamma_{c+1}(G)\cap B\big{|}.
\]
Since
\[
\gamma_{c+1}(B,G)\cong \Big{[} \dfrac{S}{R}{,}_{c} \dfrac{F}{R}\Big{]}= \dfrac{\gamma_{c+1}(S,F)~R}{R}\cong
\dfrac{\gamma_{c+1}(S,F)}{R\cap \gamma_{c+1}(S,F)},
\]

\[
\Bigg{|}\dfrac{\gamma_{c+1}(S,F)}{\gamma_{c+1}(R,F)}\Bigg{|}=
\Bigg{|}\dfrac{\gamma_{c+1}(S,F)}{R\cap \gamma_{c+1}(S,F)}\Bigg{|}~
\Bigg{|}\dfrac{R\cap \gamma_{c+1}(S,F)}{\gamma_{c+1}(R,F)}\Bigg{|}
\]
\[=\Bigg{|}\gamma_{c+1}(B,G)\Bigg{|}\Bigg{|}\dfrac{R\cap \gamma_{c+1}(S,F)}{\gamma_{c+1}(R,F)}\Bigg{|}.
\]
It follows that
\[
\big{|}\mathcal{M}^{(c)}(A)\big{|}~~ divides ~~\dfrac{\big{|}\mathcal{M}^{(c)}(G)\big{|}~\big{|}\gamma_{c+1}(G)\cap B\big{|}}{\big{|}\gamma_{c+1}(B,G)\big{|}}.
\]
\end{proof}
 For a group $ G,$ the quotient $G/G'$ is denoted by $G^{ab}$.
\begin{lem}\label{3}
Let $G$ be a finite group and $1\rightarrow R\rightarrow F \rightarrow G \rightarrow 1$ be a free presentation of $G$. Let $B$ be a central subgroup of $G$ with $B=S/R$. Put $A=G/B\cong F/S$. Then
$\dfrac{\gamma_{c+1}(S,F)}{\gamma_{c+1}(R,F)~\gamma_{c+1}(S)}$
is a homomorphic image of $B \otimes_{c}\Big{(}\dfrac{G}{B}\Big{)}^{ab}$.
\end{lem}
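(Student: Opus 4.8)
The plan is to construct an epimorphism
\[
\eta\colon B\otimes_{c}\Big(\dfrac{G}{B}\Big)^{ab}\longrightarrow \dfrac{\gamma_{c+1}(S,F)}{\gamma_{c+1}(R,F)\,\gamma_{c+1}(S)},
\]
determined on generators by $\eta(\bar s\otimes\bar f_1\otimes\cdots\otimes\bar f_c)=[s,f_1,\dots,f_c]\,\gamma_{c+1}(R,F)\gamma_{c+1}(S)$, where $s\in S$ represents $\bar s\in B=S/R$ and $f_i\in F$ represents $\bar f_i\in(G/B)^{ab}=F/S\gamma_{2}(F)$. Since $B\le Z(G)$, the group $G$ (hence $(G/B)^{ab}$) acts trivially on $B$ and on $(G/B)^{ab}$, so every action entering the iterated non-abelian tensor product is trivial and $B\otimes_{c}(G/B)^{ab}$ is just the $(c+1)$-fold tensor product $B\otimes_{\mathbb Z}(G/B)^{ab}\otimes_{\mathbb Z}\cdots\otimes_{\mathbb Z}(G/B)^{ab}$; thus defining $\eta$ amounts to checking that $(\bar s,\bar f_1,\dots,\bar f_c)\mapsto[s,f_1,\dots,f_c]\,\gamma_{c+1}(R,F)\gamma_{c+1}(S)$ is a well-defined $\mathbb Z$-multilinear map. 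Throughout I would use the elementary consequences of centrality: $[S,F]\subseteq R$, whence $\gamma_{2}(S)\subseteq[S,F]\subseteq R$ and $\gamma_{m+1}(S,F)=[[S,F],{}_{m-1}F]\subseteq\gamma_{m}(R,F)$ for every $m\ge1$ (in particular $\gamma_{c+2}(S,F)\subseteq\gamma_{c+1}(R,F)$ and $\gamma_{c+1}(S)\subseteq R$), together with the standard inclusions $[\gamma_{m}(S,F),\gamma_{n}(F)]\subseteq\gamma_{m+n}(S,F)$ and $[\gamma_{m}(R,F),\gamma_{n}(F)]\subseteq\gamma_{m+n}(R,F)$.

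Surjectivity of $\eta$ is the easy half. Unravelling $\gamma_{c+1}(S,F)=[\gamma_{c}(S,F),F]$ together with the identity $[u^{g},f]=[u,f^{g^{-1}}]^{g}$ shows that $\gamma_{c+1}(S,F)$ is generated by the $F$-conjugates of the left-normed commutators $[s,f_1,\dots,f_c]$ with $s\in S$, $f_i\in F$; and any such conjugate differs from $[s,f_1,\dots,f_c]$ by an element of $\gamma_{c+2}(S,F)\subseteq\gamma_{c+1}(R,F)$. Hence the cosets of the $[s,f_1,\dots,f_c]$ already generate the target, and $\eta$ is onto once it is defined.

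For well-definedness one expands $[s,f_1,\dots,f_c]$ after a change of representative by means of $[xy,z]=[x,z]^{y}[y,z]$ and $[x,yz]=[x,z][x,y]^{z}$ and their iterates. Replacing $s$ by $sr$ ($r\in R$) introduces only the factor $[r,f_1,\dots,f_c]\in\gamma_{c+1}(R,F)$ and commutator remainders lying in $\gamma_{c+2}(S,F)\subseteq\gamma_{c+1}(R,F)$; replacing $f_i$ by $f_i g$ with $g\in\gamma_{2}(F)$ introduces corrections lying in $[\gamma_{i}(S,F),\gamma_{2}(F)]$ bracketed with $c-i$ further copies of $F$, hence in $\gamma_{c+2}(S,F)\subseteq\gamma_{c+1}(R,F)$; and additivity in each slot follows from the same expansions, the quadratic remainders again being absorbed into $\gamma_{c+1}(R,F)$. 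The one genuinely delicate case is the replacement of $f_i$ by $f_i g$ with $g\in S$: here the leading correction is a weight-$(c+1)$ commutator $[s,f_1,\dots,f_{i-1},g,f_{i+1},\dots,f_c]$ carrying two entries in $S$, and one must show it lies in $\gamma_{c+1}(R,F)\gamma_{c+1}(S)$. My plan for this is an induction on $c$: by repeated use of the Hall--Witt identity one migrates the inner $S$-entry $g$ until it becomes adjacent to $s$, each migration producing a commutator that acquires an extra entry drawn from $[S,F]\subseteq R$ (so that, after re-bracketing, it either falls under the inductive hypothesis or, having a weight-$(c+1)$ bracket with an $R$-entry, lies in $\gamma_{c+1}(R,F)$), while the finally recollected commutator has the shape $[[s,g],\cdots]$ with $[s,g]\in\gamma_{2}(S)$ and is steered into $\gamma_{c+1}(S)\,\gamma_{c+1}(R,F)$. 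Granting this, $\eta$ is a well-defined epimorphism and the lemma follows.

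The step I expect to be the real obstacle is precisely this last one: organizing the Hall--Witt rearrangements so that every correction term produced when a surplus $S$-letter passes through a weight-$(c+1)$ commutator is provably absorbed by $\gamma_{c+1}(R,F)\gamma_{c+1}(S)$. The subtlety is that the naive weight count only places such a correction in $\gamma_{c}(R,F)$, so one must exploit its repeated $S$-content together with the containments $[S,F]\subseteq R$ and $\gamma_{2}(S)\subseteq R$ to push it into the smaller subgroup; it is exactly this interaction with the weight grading that makes $\gamma_{c+1}(R,F)\gamma_{c+1}(S)$, rather than $\gamma_{c+1}(R,F)$ alone, the correct denominator. Everything else is routine commutator calculus once the containments listed in the first paragraph are in place.
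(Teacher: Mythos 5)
Your construction is the same one the paper uses: define $\eta$ on generators by $\bar s\otimes\bar f_1\otimes\cdots\otimes\bar f_c\mapsto[s,f_1,\dots,f_c]$ and verify well-definedness by commutator expansion. Your surjectivity argument and the cases ``$s\mapsto sr$'', ``$f_i\mapsto f_ig$ with $g\in\gamma_2(F)$'', and additivity are fine. The genuine gap is exactly the case you flag as ``the real obstacle'': showing that a weight-$(c+1)$ commutator with two entries in $S$, such as $[s,f_1,\dots,f_{i-1},g,f_{i+1},\dots,f_c]$ with $s,g\in S$, lies in $\gamma_{c+1}(R,F)\,\gamma_{c+1}(S)$. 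No Hall--Witt rearrangement will deliver this, because the containment is false for $c\ge2$. Take $F$ free on $x,y$, let $R=\gamma_2(F)F^p$ (so $G=F/R\cong\mathbb{Z}_p\oplus\mathbb{Z}_p$), and let $S=\langle x\rangle R$, so that $B=S/R\cong\mathbb{Z}_p$ is central and $x$ becomes trivial in $(G/B)^{ab}=F/F'S$. Well-definedness would force $[x,y,x]\equiv[x,y,1]=1$ modulo $\gamma_3(R,F)\gamma_3(S)$. But reading everything in $\gamma_3(F)/\gamma_4(F)\cong\mathbb{Z}[y,x,x]\oplus\mathbb{Z}[y,x,y]$: the images of $R$ and of $[S,S]$ in the relevant graded pieces are divisible by $p$, so both $\gamma_3(R,F)$ and $\gamma_3(S)$ land inside $p\mathbb{Z}[y,x,x]+p\mathbb{Z}[y,x,y]$, whereas $[x,y,x]\equiv-[y,x,x]$ is a primitive vector. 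Hence $[x,y,x]\notin\gamma_3(R,F)\gamma_3(S)$ and the map is not well defined.

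The same computation shows the statement itself cannot be rescued by a cleverer proof: here $\gamma_3(S,F)$ surjects onto all of $\gamma_3(F)/\gamma_4(F)$, so
\[
\Big|\gamma_3(S,F)\big/\gamma_3(R,F)\gamma_3(S)\Big|\;\ge\;p^2,
\qquad\text{while}\qquad
\big|B\otimes_2(G/B)^{ab}\big|=|\mathbb{Z}_p\otimes\mathbb{Z}_p\otimes\mathbb{Z}_p|=p,
\]
so the left-hand quotient is not a homomorphic image of the right-hand group. (This also propagates to Theorem~\ref{4}: with $G=\mathbb{Z}_p^{(2)}$, $B\cong\mathbb{Z}_p$ and $c=2$ its conclusion would read $p^2\mid p$.) You should be aware that the paper's own proof dismisses precisely this step with ``it is easy to see''; your instinct that it is the crux is correct, but the honest conclusion is that the denominator $\gamma_{c+1}(R,F)\gamma_{c+1}(S)$ is too small for $c\ge2$ --- one must enlarge it, e.g.\ to include all weight-$(c+1)$ commutators having at least two entries in $S$, before a map of this kind can exist.
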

\begin{proof}
 Define
 \begin{align*}
&\theta :
B \times \underbrace{  \Big{(} \dfrac{G}{B}\Big{)}^{ab} \times \ldots \times  \Big{(} \dfrac{G}{B}\Big{)}^{ab}}
_\text{c-times}\longrightarrow \dfrac{\gamma_{c+1}(S,F)}{\gamma_{c+1}(R,F)~\gamma_{c+1}(S)} \\
&(sR, f_1F' S, \ldots,  f_cF' S)\longmapsto [s,f_1,\ldots, f_c] ~\gamma_{c+1}(R,F)~\gamma_{c+1}(S)
\end{align*}
where $ f_1,\ldots ,f_c\in F, s\in S $. Put $ T=\gamma_{c+1}(R,F)~\gamma_{c+1}(S)$. We claim that $ \theta $ is well-defined.
Let $ f_i= f'_{i}x_iy_i$ where $ f'_i\in F, x_i\in F',y_i\in S$ and $ 1\leq i\leq c$ and also
$ s=s_1r' $ with $ r'\in R,s_1\in S $. It is easy to see that $ [s,f_1,\ldots ,f_c]\equiv [s_1,f'_1,\ldots ,f'_c]  $
mod $ T $. This follows that $ \theta $ is well-defined. Therefore  there exists a unique homomorphism
 \begin{align*}
&\overline{\theta}  :B \otimes \underbrace{ \Big{(} \dfrac{G}{B}\Big{)}^{ab} \otimes \ldots \otimes  \Big{(} \dfrac{G}{B}\Big{)}^{ab}}_\text{c-times}\longrightarrow \dfrac{\gamma_{c+1}(S,F)}{\gamma_{c+1}(R,F)~\gamma_{c+1}(S)} \\
&sR\otimes f_1F' S\otimes \ldots \otimes f_cF'S\longmapsto [s,f_1,\ldots ,f_c] ~\gamma_{c+1}(R,F)~\gamma_{c+1}(S),
\end{align*}
such that $ \mathrm{Im}~ \overline{\theta}= \dfrac{\gamma_{c+1}(S,F )}{\gamma_{c+1}(R,F)~\gamma_{c+1}(S)} $.
\end{proof}
\begin{thm}\label{4}
Let $G$ be a finite group, $B\subseteq Z(G)$ and $A=G/B$. Then
\[
\Big{|} \mathcal{M} ^{(c)} (G)\Big{|} ~\Big{|} \gamma_{c+1}(G) \cap B\Big{|} ~divides ~\Big{|} \mathcal{M} ^{(c)} (A)\Big{|}~\Big{|} \mathcal{M}^{(c)} (B)\Big{|}~\Big{|}B \otimes_{c} \Big{(}\dfrac{G}{B}\Big{)}^{ab}\Big{|}.
\]
\end{thm}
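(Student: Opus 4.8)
The plan is to fix a free presentation $1\to R\to F\to G\to 1$, write $B=S/R$ (so $A\cong F/S$), and reduce the asserted divisibility to two simpler ones, one supplied by Lemma \ref{3} and one obtained from a free presentation of $B$ itself. First I would record the consequences of $B\le Z(G)$: passing to $F/R$ this says $[S,F]\le R$ and $S\unlhd F$, hence $\gamma_{c+1}(S,F)\le\gamma_2(S,F)=[S,F]\le R$. By definition $\mathcal{M}^{(c)}(G)=(\gamma_{c+1}(F)\cap R)/\gamma_{c+1}(R,F)$ and $\mathcal{M}^{(c)}(A)=(\gamma_{c+1}(F)\cap S)/\gamma_{c+1}(S,F)$, while the modular law together with $R\le S$ identifies $\gamma_{c+1}(G)\cap B\cong(\gamma_{c+1}(F)\cap S)/(\gamma_{c+1}(F)\cap R)$. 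Since $G$ and $A$ are finite all these indices are finite, and from the chain
\[
\gamma_{c+1}(R,F)\le\gamma_{c+1}(S,F)\le\gamma_{c+1}(F)\cap R\le\gamma_{c+1}(F)\cap S
\]
(the inclusions being immediate from $R\unlhd F$, $R\le S$, and $\gamma_{c+1}(S,F)\le R$) multiplicativity of indices gives the identity
\[
|\mathcal{M}^{(c)}(G)|\,|\gamma_{c+1}(G)\cap B|=|\mathcal{M}^{(c)}(A)|\cdot[\gamma_{c+1}(S,F):\gamma_{c+1}(R,F)].
\]
Thus it remains to show that $[\gamma_{c+1}(S,F):\gamma_{c+1}(R,F)]$ divides $|\mathcal{M}^{(c)}(B)|\,\bigl|B\otimes_c(G/B)^{ab}\bigr|$.

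For that I would insert the intermediate subgroup $\gamma_{c+1}(R,F)\,\gamma_{c+1}(S)$, which is normal in $F$ and contained in $\gamma_{c+1}(S,F)$ (because $\gamma_{c+1}(R,F)\le\gamma_{c+1}(S,F)$ and $\gamma_{c+1}(S)\le\gamma_{c+1}(S,F)$), so that
\[
[\gamma_{c+1}(S,F):\gamma_{c+1}(R,F)]=\Bigl|\dfrac{\gamma_{c+1}(S,F)}{\gamma_{c+1}(R,F)\,\gamma_{c+1}(S)}\Bigr|\cdot[\gamma_{c+1}(S):\gamma_{c+1}(S)\cap\gamma_{c+1}(R,F)],
\]
the second factor being computed by the second isomorphism theorem, legitimate since every subgroup occurring is normal in $F$. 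The first factor divides $\bigl|B\otimes_c(G/B)^{ab}\bigr|$ at once by Lemma \ref{3}. For the second factor I would use that $S$, as a subgroup of a free group, is free, so $1\to R\to S\to B\to 1$ is a free presentation of $B$ and $\mathcal{M}^{(c)}(B)=(\gamma_{c+1}(S)\cap R)/\gamma_{c+1}(R,S)$; moreover $B$ is abelian, so $\gamma_2(S)\le R$, whence $\gamma_{c+1}(S)\le R$ and $\mathcal{M}^{(c)}(B)=\gamma_{c+1}(S)/\gamma_{c+1}(R,S)$. Since $\gamma_{c+1}(R,S)\le\gamma_{c+1}(S)\cap\gamma_{c+1}(R,F)\le\gamma_{c+1}(S)$ (using $R\le S$ and $S\le F$), the second factor divides $[\gamma_{c+1}(S):\gamma_{c+1}(R,S)]=|\mathcal{M}^{(c)}(B)|$. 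Combining the two estimates gives $[\gamma_{c+1}(S,F):\gamma_{c+1}(R,F)]$ dividing $|\mathcal{M}^{(c)}(B)|\,\bigl|B\otimes_c(G/B)^{ab}\bigr|$, and then the displayed identity yields the theorem.

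The argument is essentially bookkeeping with verbal commutator subgroups; the single non-formal input is Lemma \ref{3}, already at our disposal. The points I would be most careful about are, first, the inclusion $\gamma_{c+1}(S,F)\le R$ — this is exactly where the hypothesis $B\le Z(G)$ is used — and, second, the remark that $S$ is free, which is what allows $\mathcal{M}^{(c)}(B)$ to be read off directly from the pair $R\le S$. Once these are in place, the factorization of the index $[\gamma_{c+1}(S,F):\gamma_{c+1}(R,F)]$ through $\gamma_{c+1}(R,F)\gamma_{c+1}(S)$ and the normality in $F$ of all the subgroups involved make the index arithmetic and the isomorphism theorems entirely routine, so no serious obstacle is expected.
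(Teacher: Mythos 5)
Your proof is correct and follows essentially the same route as the paper: the same key identity $|\mathcal{M}^{(c)}(G)|\,|\gamma_{c+1}(G)\cap B|=|\mathcal{M}^{(c)}(A)|\,[\gamma_{c+1}(S,F):\gamma_{c+1}(R,F)]$, the same intermediate subgroup $\gamma_{c+1}(R,F)\gamma_{c+1}(S)$, Lemma \ref{3} for one factor and the free presentation $1\to R\to S\to B\to 1$ for the other. The only difference is cosmetic: you derive the first identity by direct index bookkeeping in the chain $\gamma_{c+1}(R,F)\le\gamma_{c+1}(S,F)\le\gamma_{c+1}(F)\cap R\le\gamma_{c+1}(F)\cap S$, whereas the paper quotes it from (the proof of) Theorem \ref{1}.
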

\begin{proof}
Let $1\rightarrow R\rightarrow F \rightarrow G \rightarrow 1$ be a free presentation of $G$, and $B$ be a central subgroup of $G$. Put $B=S/R$ so that $A\cong F/S$. Clearly, $[F,S]\subseteq R$. By Theorem \ref{1} and its proof,
\[
\Big{|} \mathcal{M}^{(c)}(G)\Big{|} ~ \Big{|} \gamma_{c+1}(G) \cap B\Big{|}=\Big{|} \mathcal{M}^{(c)}(A)\Big{|}~
\Big{|}\dfrac{\gamma_{c+1}(S,F)}{\gamma_{c+1}(R,F)} \Big{|}.
\]
Since
\[
\dfrac{\dfrac{\gamma_{c+1}(S,F)}{\gamma_{c+1}(R,F)}}{\dfrac{\gamma_{c+1}(R,F)~\gamma_{c+1}(S)}{\gamma_{c+1}(R,F)}}
\cong \dfrac{\gamma_{c+1}(S,F)}{\gamma_{c+1}(R,F)~\gamma_{c+1}(S)},
\]
we have
\[
\Big{|} \mathcal{M} ^{(c)} (G)\Big{|} ~\Big{|} \gamma_{c+1}(G) \cap B\Big{|}=\Big{|} \mathcal{M} ^{(c)} (A)\Big{|}~\Big{|} \dfrac{\gamma_{c+1}(S,F)}{\gamma_{c+1}(R,F)~\gamma_{c+1}(S)} \Big{|} ~\Big{|} \dfrac{\gamma_{c+1}(R,F)\gamma_{c+1}(S)}{\gamma_{c+1}(R,F)} \Big{|}.
\]
But
\[
\dfrac{\gamma_{c+1}(R,F)\gamma_{c+1}(S)}{\gamma_{c+1}(R,F)}\cong  \dfrac{\gamma_{c+1}(S)}{\gamma_{c+1}(S)\cap \gamma_{c+1}(R,F)}\cong \dfrac{\dfrac{\gamma_{c+1}(S)}{\gamma_{c+1}(R,S)}}
{\dfrac{\gamma_{c+1}(S)\cap \gamma_{c+1}(R,F)}{\gamma_{c+1}(R,S)}}.
\]
Now since $\gamma_{c+1}(S)\subseteq \gamma_{c+1}(S,F)\subseteq [S,F] \subseteq R$,
we have
\[
\dfrac{\gamma_{c+1}(S)}{\gamma_{c+1}(R,S)}=\dfrac{\gamma_{c+1}(S)\cap R}{\gamma_{c+1}(R,S)}=\mathcal{M}^{(c)}(B).
\]
Therefore
\[
\Big{|}\mathcal{M}^{(c)}(G)\Big{|} ~ \Big{|}\gamma_{c+1}(G)\cap B\Big{|}=
\dfrac{\Big{|} \mathcal{M}^{(c)}(A)\Big{|}~\Big{|} \mathcal{M}^{(c)}(B)\Big{|}~\Bigg{|}\dfrac{\gamma_{c+1}(S,F)}{\gamma_{c+1}(S)\gamma_{c+1}(R,F)} \Bigg{|}}
{\Bigg{|}\dfrac{\gamma_{c+1}(S)\cap\gamma_{c+1}(R,F)}{\gamma_{c+1}(R,S)} \Bigg{|}}.
\]
Finally by Lemma \ref{2},
\[
\Big{|} \mathcal{M} ^{(c)} (G)\Big{|} ~\Big{|} \gamma_{c+1}(G)\cap B \Big{|} ~divides ~\Big{|} \mathcal{M}^{(c)} (A)\Big{|}~\Big{|} \mathcal{M} ^{(c)} (B)\Big{|}~\Big{|}B \otimes_c \Big{(}\dfrac{G}{B}\Big{)}^{ab}\Big{|}.
\]
\end{proof}
We just need the results of the next three propositions, so they come without proof.
\begin{prop}\label{5}
\cite[Theorem 2.6]{6}
Let $G$ be a group and $B$ be a central subgroup of $G$. Then the sequence
\[
B \wedge_{c} G\longrightarrow \mathcal{M}^{(c)}(G) \longrightarrow  \mathcal{M}^{(c)}(G/B)\longrightarrow B\cap \gamma_{c+1}(G) \longrightarrow 1
\]
is exact.
\end{prop}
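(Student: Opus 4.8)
The plan is to compute everything inside a fixed free presentation. Take $1\to R\to F\to G\to 1$ and write the central subgroup $B$ as $S/R$ with $R\le S\le F$; centrality gives $[F,S]\le R$, and $G/B\cong F/S$. An easy induction on $k$ (using $[F,S]\le R$, normality of $R$ in $F$, and $\gamma_{k+1}(S,F)=[\gamma_k(S,F),F]$) shows $\gamma_{k+1}(S,F)\le\gamma_k(R,F)$ for all $k\ge 1$; in particular $\gamma_{c+1}(S,F)\le\gamma_{c+1}(F)\cap R$ and $\gamma_{c+2}(S,F)\le\gamma_{c+1}(R,F)$, while trivially $\gamma_{c+1}(R,F)\le\gamma_{c+1}(S,F)$. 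Hence we have the chain $\gamma_{c+1}(R,F)\le\gamma_{c+1}(S,F)\le\gamma_{c+1}(F)\cap R$, and $\gamma_{c+1}(S,F)/\gamma_{c+1}(R,F)$ lies in the centre of $F/\gamma_{c+1}(R,F)$. By definition $\mathcal M^{(c)}(G)=(\gamma_{c+1}(F)\cap R)/\gamma_{c+1}(R,F)$ and $\mathcal M^{(c)}(G/B)=(\gamma_{c+1}(F)\cap S)/\gamma_{c+1}(S,F)$, and Dedekind's modular law (valid since $R\le S$) gives $B\cap\gamma_{c+1}(G)=(\gamma_{c+1}(F)\cap S)R/R\cong(\gamma_{c+1}(F)\cap S)/(\gamma_{c+1}(F)\cap R)$.

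Next I would put in the two right-hand maps as the evident natural ones: let $\alpha\colon\mathcal M^{(c)}(G)\to\mathcal M^{(c)}(G/B)$ be induced by the inclusion $R\hookrightarrow S$ (well defined because $\gamma_{c+1}(R,F)\le\gamma_{c+1}(S,F)$), and let $\beta\colon\mathcal M^{(c)}(G/B)\to B\cap\gamma_{c+1}(G)$ be induced by the projection $\gamma_{c+1}(F)\cap S\to(\gamma_{c+1}(F)\cap S)/(\gamma_{c+1}(F)\cap R)$ (well defined because $\gamma_{c+1}(S,F)\le\gamma_{c+1}(F)\cap R$). The right half of the sequence is then pure subgroup arithmetic: $\beta$ is visibly onto; its kernel is $(\gamma_{c+1}(F)\cap R)/\gamma_{c+1}(S,F)$, and since $\gamma_{c+1}(S,F)\le\gamma_{c+1}(F)\cap R$ the image of $\alpha$ is exactly this subgroup, so the sequence is exact at $\mathcal M^{(c)}(G/B)$; and $\ker\alpha=\gamma_{c+1}(S,F)/\gamma_{c+1}(R,F)$. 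Thus the whole proposition reduces to exactness at $\mathcal M^{(c)}(G)$, i.e. to producing a homomorphism $\lambda\colon B\wedge_c G\to\mathcal M^{(c)}(G)$ whose image equals $\gamma_{c+1}(S,F)/\gamma_{c+1}(R,F)$.

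For $\lambda$ I would use the generalized Ganea homomorphism (this is the content of \cite{6}). As $\mathcal M^{(c)}(G)$ is abelian, it is enough to build a surjection $B\wedge_c G\twoheadrightarrow\gamma_{c+1}(S,F)/\gamma_{c+1}(R,F)$ and postcompose it with the inclusion $\gamma_{c+1}(S,F)/\gamma_{c+1}(R,F)\hookrightarrow\mathcal M^{(c)}(G)$. On generators of the iterated nonabelian exterior product (unwound recursively through $N\wedge_{k+1}G=(N\wedge_k G)\wedge G$) the surjection sends $b\wedge g_1\wedge\dots\wedge g_c$ to the class of the left-normed commutator $[\widetilde b,\widetilde g_1,\dots,\widetilde g_c]$ modulo $\gamma_{c+1}(R,F)$, where $\widetilde b\in S$ and $\widetilde g_i\in F$ are arbitrary lifts; it is surjective because $\gamma_{c+1}(S,F)$ is generated modulo $\gamma_{c+1}(R,F)$ by such commutators.

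The hard part is the well-definedness of this assignment: one must check that it is independent of the chosen lifts and that it respects the defining relations of $B\wedge_c G$. Independence of lifts is routine commutator collection — replacing $\widetilde b$ or some $\widetilde g_i$ by an $R$-multiple perturbs $[\widetilde b,\widetilde g_1,\dots,\widetilde g_c]$ only within $\gamma_{c+1}(R,F)$, using $[F,S]\le R$ and $\gamma_{k+1}(S,F)\le\gamma_k(R,F)$. The relations of the exterior product translate into commutator identities that must hold modulo $\gamma_{c+1}(R,F)$, and the place where centrality of $B$ is indispensable is in the conjugation (``action'') terms: because $[F,S]\le R$, the three subgroups lemma yields $[F',S]\le[F,R]$, and after absorbing the remaining commutator entries this lands inside $\gamma_{c+1}(R,F)$, which is exactly what makes the identities close up. An equivalent route is to prove the whole proposition by induction on $c$, the base case $c=1$ being the Ganea-type exact sequence $B\wedge G\to\mathcal M(G)\to\mathcal M(G/B)\to B\cap G'\to 1$ attached to the nonabelian exterior product, and the inductive step using $\gamma_{c+1}(S,F)=[\gamma_c(S,F),F]$ together with the recursion defining $B\wedge_c G$. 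Beyond this commutator bookkeeping every step is formal, so I expect the verification of the exterior-product relations for $\lambda$ to be the only genuine obstacle.
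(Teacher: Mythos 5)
First, a point of comparison: the paper does not prove Proposition \ref{5} at all --- it is one of the ``next three propositions'' explicitly imported without proof from Burns--Ellis \cite[Theorem 2.6]{6}. So your attempt can only be judged on its own merits. Your reduction of the right-hand portion of the sequence to subgroup arithmetic in a free presentation is correct and complete: the containments $\gamma_{c+1}(R,F)\le\gamma_{c+1}(S,F)\le\gamma_{c+1}(F)\cap R$, the identification $B\cap\gamma_{c+1}(G)\cong(\gamma_{c+1}(F)\cap S)/(\gamma_{c+1}(F)\cap R)$, the computation of $\operatorname{Im}\alpha=\ker\beta=(\gamma_{c+1}(F)\cap R)/\gamma_{c+1}(S,F)$ and of $\ker\alpha=\gamma_{c+1}(S,F)/\gamma_{c+1}(R,F)$ all check out, and this style of computation is consistent with how the paper itself manipulates these quotients (compare Lemma \ref{3} and the proof of Theorem \ref{4}).

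The genuine gap is that everything you have actually verified is the formal part; the entire content of the theorem is concentrated in the existence of a well-defined homomorphism $\lambda\colon B\wedge_c G\to\mathcal{M}^{(c)}(G)$ with image $\gamma_{c+1}(S,F)/\gamma_{c+1}(R,F)$, and this you only sketch. Two specific points need real work. (1) Well-definedness cannot be checked merely on ``simple'' generators $b\wedge g_1\wedge\cdots\wedge g_c$: since $B\wedge_{k+1}G=(B\wedge_k G)\wedge G$ is generated by $x\wedge g$ for \emph{arbitrary} $x\in B\wedge_k G$, you must construct the map stage by stage, verifying at each stage that the image carries a compatible crossed-module (or at least $G$-action) structure so that the next exterior product and the next induced map exist; this is exactly the inductive route you mention in your last sentence, and it is the one that has to be carried out, not the direct one. (2) The commutator estimates are subtler than ``routine collection'': for instance, showing that perturbing $\widetilde{g}_i$ by $r\in R$ moves $[\widetilde{b},\widetilde{g}_1,\dots,\widetilde{g}_c]$ only inside $\gamma_{c+1}(R,F)$ requires an inclusion of the type $[\gamma_i(S,F),R]\le\gamma_{i+1}(R,F)$, which does not follow from the naive bound $[\gamma_i(S,F),R]\le[\gamma_{i-1}(R,F),R]\le\gamma_i(R,F)$ (that loses one weight) but needs an induction via the three subgroups lemma using $[F,S]\le R$. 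These steps are all true and provable, so your architecture is sound, but as written the proof of exactness at $\mathcal{M}^{(c)}(G)$ --- the only non-formal assertion in the proposition --- is deferred rather than given.
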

G. Ellis in \cite{7} has generalized the formula of M. R. R. Moghaddam in \cite{24}, for the $c$-nilpotent multiplier of the direct product of two groups. In the next theorem $\Gamma_{c+1} (A, B)$ for abelian groups $A$ and $B$ is defined as follows. Let $A$ and $B$ be $d$-generator and $d'$-generator abelian groups with generating sets $\{a_1,\ldots, a_d\}$ and $\{b_1,\ldots, b_{d'}\}$, respectively. For each basic commutator of weight $c+1$ on $\{a_1,\ldots, a_d,b_1,\cdots, b_{d'}\}$ which is ordered as $a_1<a_2<\cdots<a_d<b_1<b_2<\cdots<b_{d'}$ such as $[x_{i_1},\cdots, x_{i_{c+1}}$, (with any bracketing), we correspond an iterating tensor product $X_1\otimes \cdots \otimes X_{c+1}$ in which $X_{i_j}=A$ if $x_{i_j}\in \{a_1,\ldots, a_d\}$ and $X_{i_j}=B$ if $x_{i_j}\in \{b_1,\ldots, b_{d'}\}$. Now $\Gamma_{c+1} (A, B)$ is the direct sum of all such iterated tensor products involving at least one $A$ and one $B$. (See \cite{7} for more information).
\begin{prop}\label{61}
\cite[Theorem 5]{7}
Let $G$ and $H$ be  finite groups. Then there is an isomorphism
\[
\mathcal{M}^{(c)}(G\times H)\cong \mathcal{M}^{(c)}(G)\oplus \mathcal{M}^{(c)}( H)\oplus \Gamma_{c+1} (G^{ab},H^{ab}).
\]

\end{prop}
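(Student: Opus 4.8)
The statement is Ellis's direct-product formula, and since the preliminaries have already assembled the basic-commutator machinery (the Witt Formula and Theorem \ref{hh}), the natural route is through free presentations. The plan is as follows. Fix free presentations $1\to R_1\to F_1\to G\to 1$ and $1\to R_2\to F_2\to H\to 1$ and put $F=F_1*F_2$, which is again free. Using the isomorphism $F/[F_1,F_2]^{F}\cong F_1\times F_2$ one checks that $S:=R_1R_2[F_1,F_2]^{F}$ is normal in $F$ and $F/S\cong G\times H$, so that
\[
\mathcal{M}^{(c)}(G\times H)=\dfrac{\gamma_{c+1}(F)\cap S}{\gamma_{c+1}(S,F)}.
\]
Next choose a basis $X=X_1\cup X_2$ of $F$ with $X_i$ a basis of $F_i$, ordered so that every letter of $X_1$ precedes every letter of $X_2$. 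By Theorem \ref{hh}, the basic commutators of weight $c+1$ on $X$ fall into three families: those on letters of $X_1$ only (a free basis for $\gamma_{c+1}(F_1)/\gamma_{c+2}(F_1)$), those on letters of $X_2$ only, and the \emph{mixed} ones, which involve at least one letter from each $X_i$ and which, by construction, are indexed by exactly the same data as the iterated tensor products building $\Gamma_{c+1}(G^{ab},H^{ab})$.

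First I would dispose of the "pure" contribution. The projection $G\times H\to G$ and the inclusion $G\to G\times H$ show, by functoriality of $\mathcal{M}^{(c)}$, that $\mathcal{M}^{(c)}(G)$ is a direct summand of $\mathcal{M}^{(c)}(G\times H)$, and likewise $\mathcal{M}^{(c)}(H)$; since these two retractions have complementary kernels the summands meet trivially, so $\mathcal{M}^{(c)}(G)\oplus\mathcal{M}^{(c)}(H)$ splits off. Concretely, the subgroup of $\gamma_{c+1}(F)\cap S$ generated by the pure-$F_1$ basic commutators reduces, modulo $\gamma_{c+1}(S,F)$, to $(\gamma_{c+1}(F_1)\cap R_1)/\gamma_{c+1}(R_1,F_1)=\mathcal{M}^{(c)}(G)$, and symmetrically for $H$. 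Then I would identify the remaining complement with $\Gamma_{c+1}(G^{ab},H^{ab})$: a mixed basic commutator $[x_{i_1},\dots,x_{i_{c+1}}]$, viewed in $\gamma_{c+1}(F)/\gamma_{c+1}(S,F)$, depends on each entry only through its image in $F_1/F_1'R_1=G^{ab}$ or in $F_2/F_2'R_2=H^{ab}$ — replacing an entry $x$ by $xf'r$ with $f'\in F_i'$, $r\in R_i$ alters the commutator only by an element of $\gamma_{c+1}(S,F)$, which is a Hall--Witt computation of the same flavour as the well-definedness step in Lemma \ref{3}. This produces a surjection $\Gamma_{c+1}(G^{ab},H^{ab})\longrightarrow(\text{mixed part})$, and I would prove it is an isomorphism first in the elementary-abelian (more generally free-abelian) case, where both sides are free abelian and the ranks match by the Witt Formula, and then bootstrap to arbitrary finite $G,H$ by an extension/induction argument using Theorem \ref{4} and Proposition \ref{5}; an alternative is to realise $\mathcal{M}^{(c)}$ through the iterated non-abelian exterior product $G\wedge_c G$ and expand the standard direct-product decomposition of the non-abelian tensor product $c$ times, which again separates a pure and a mixed part.

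The main obstacle is the direct-sum claim itself: showing that $\gamma_{c+1}(F)\cap S$ is the internal direct product of its pure-$F_1$, pure-$F_2$ and mixed subgroups and that $\gamma_{c+1}(S,F)$ respects this decomposition, so that no "cross terms" between the three families survive in the multiplier. The delicate point is controlling how $[F_1,F_2]^{F}$ and its repeated commutators with $F$ sit inside $\gamma_{c+1}(F)$: via the collection process together with the Hall--Witt identities, any such cross term can be rewritten as a product of genuine mixed basic commutators of the admissible shape, so it contributes nothing beyond what $\Gamma_{c+1}(G^{ab},H^{ab})$ already records. Once this bookkeeping is carried out, the three summands can be read off directly and the asserted isomorphism follows.
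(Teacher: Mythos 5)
A point of comparison first: the paper does not prove Proposition \ref{61} at all. It is imported verbatim from Ellis \cite[Theorem 5]{7}, and the authors state explicitly that the next three propositions ``come without proof''. So your sketch cannot be matched against an in-paper argument; it can only be judged on its own terms.

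On its own terms it has a genuine gap, and it is precisely the step you yourself label ``the main obstacle'' and then declare surmountable. The setup $F=F_1*F_2$, $S=R_1R_2[F_1,F_2]^F$ is correct, the retraction argument (orthogonal idempotents induced by the inclusions and projections) does split off $\mathcal{M}^{(c)}(G)\oplus\mathcal{M}^{(c)}(H)$, and the well-definedness of the map from $\Gamma_{c+1}(G^{ab},H^{ab})$ onto the mixed part is plausible by the same computation as in Lemma \ref{3}. What is never established is the injectivity of that surjection, equivalently that the pure and mixed families remain independent modulo $\gamma_{c+1}(S,F)$. Theorem \ref{hh} only controls $\gamma_{c+1}(F)$ modulo later terms of the lower central series of $F$, whereas here one quotients by $\gamma_{c+1}(S,F)$, which neither contains $\gamma_{c+2}(F)$ nor has an evident basic-commutator description; ``collection plus Hall--Witt'' is an appeal to exactly the bookkeeping that constitutes the theorem. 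Your proposed repair --- a Witt-formula rank count in the free-abelian case followed by a ``bootstrap'' via Theorem \ref{4} and Proposition \ref{5} --- cannot close this: those results give only divisibility of orders and a non-split exact sequence, so at best they would bound $|\mathcal{M}^{(c)}(G\times H)|$ without determining the isomorphism type, and even the matching upper bound is not actually derived. The alternative you mention in one clause (expanding the iterated nonabelian tensor/exterior product of a direct product $c$ times) is much closer to how the cited result is really proved and is the route worth developing if a self-contained argument is wanted.
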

 Let  $\mathbb{Z}_ n^{(m)}$ denote the direct product of $m$ copies of $\mathbb{Z}_ n.$
\begin{prop}\label{7}
\cite[Theorem 2.4]{21}
Let $G\cong \mathbb{Z}_{n_1}\oplus\ldots\oplus \mathbb{Z}_{n_k}$, where $n_{i+1}\Big{|} n_i$, $1\leq i \leq k-1$. Then
\[
\mathcal{M}^{(c)}(G)\cong \mathbb{Z}^{(\chi _{c+1}(2))}_{n_2} \oplus \mathbb{Z}^{(\chi _{c+1}(3)-\chi _{c+1}(2))}_{n_3} \oplus\ldots \oplus
\mathbb{Z}^{(\chi _{c+1}(k)-\chi _{c+1}(k-1))}_{n_k}.
\] Where $\chi _{r}(s)$ is the number of all basic commutators of weight $r$ on $s$ letters.
\end{prop}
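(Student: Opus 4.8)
Throughout, $c\ge 1$ (for $c=0$ one has $\mathcal{M}^{(0)}(G)=1$ and the statement is vacuous). I would compute $\mathcal{M}^{(c)}(G)$ from a free presentation and read everything off the basic-commutator basis of Theorem~\ref{hh}. Take $F$ free on $x_1,\dots,x_k$ and $R\unlhd F$ with $F/R\cong G$, $F'\le R$, and $R/F'$ identified with $n_1\mathbb{Z}\oplus\cdots\oplus n_k\mathbb{Z}\le F/F'\cong\mathbb{Z}^{k}$; then $R=F'N$ with $N=\langle x_1^{n_1},\dots,x_k^{n_k}\rangle$. Since $c\ge 1$, $\gamma_{c+1}(F)\le F'\le R$, so $\gamma_{c+1}(F)\cap R=\gamma_{c+1}(F)$, while $\gamma_{c+1}(F',F)=\gamma_{c+2}(F)\le\gamma_{c+1}(R,F)$ by monotonicity in the first slot; hence
\[
\mathcal{M}^{(c)}(G)=\frac{\gamma_{c+1}(F)}{\gamma_{c+1}(R,F)}\cong\frac{\gamma_{c+1}(F)/\gamma_{c+2}(F)}{\gamma_{c+1}(R,F)/\gamma_{c+2}(F)},
\]
and by Theorem~\ref{hh} the numerator is free abelian with basis $\mathcal{B}$, the set of basic commutators of weight $c+1$ on $x_1,\dots,x_k$.

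The heart of the matter is to locate $H:=\gamma_{c+1}(R,F)/\gamma_{c+2}(F)$ inside $\bigoplus_{b\in\mathcal{B}}\mathbb{Z}b$. First, modulo $\gamma_{c+2}(F)$ the $F'$-part of $R$ dies after $c$ further commutators, so $\gamma_{c+1}(R,F)\equiv\gamma_{c+1}(N,F)\pmod{\gamma_{c+2}(F)}$; and the standard facts that the $c$-fold commutator is multilinear modulo $\gamma_{c+2}(F)$ and that $[x^{n},f_1,\dots,f_c]\equiv[x,f_1,\dots,f_c]^{n}\pmod{\gamma_{c+2}(F)}$ show that $H$ is generated by the elements $n_i[x_i,x_{l_1},\dots,x_{l_c}]$, over all indices, each left-normed commutator being rewritten in the basis $\mathcal{B}$. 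Now $\mathcal{B}$ splits according to the multiset of letters occurring in a basic commutator; for such a multiset $M$ write $j(M)$ for its largest index. Rewriting $[x_i,x_{l_1},\dots,x_{l_c}]$ stays within the block of its own content and always uses $x_i$, so $i\le j(M)$ and $n_{j(M)}\mid n_i$; hence on each block $H$ is contained in $n_{j(M)}\cdot\big(\bigoplus_{b}\mathbb{Z}b\big)$. For the reverse inclusion, invoke the classical fact that in the free Lie ring the content-$M$ component is spanned by the left-normed brackets beginning with any fixed letter of $M$: choosing that letter to be $x_{j(M)}$, the generators $n_{j(M)}[x_{j(M)},x_{l_1},\dots,x_{l_c}]$ already fill out $n_{j(M)}\cdot\big(\bigoplus_{b}\mathbb{Z}b\big)$ on the block. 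Therefore $H=\bigoplus_{b\in\mathcal{B}}n_{j(b)}\mathbb{Z}b$, where $j(b)$ is the largest index in $b$, and consequently $\mathcal{M}^{(c)}(G)\cong\bigoplus_{b\in\mathcal{B}}\mathbb{Z}/n_{j(b)}\mathbb{Z}$.

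It remains to collect terms. A basic commutator of weight $c+1$ on $x_1,\dots,x_k$ has largest letter exactly $x_j$ iff it is a basic commutator on $\{x_1,\dots,x_j\}$ but not on $\{x_1,\dots,x_{j-1}\}$; there are $\chi_{c+1}(j)-\chi_{c+1}(j-1)$ of these (and $\chi_{c+1}(1)=0$). Substituting yields
\[
\mathcal{M}^{(c)}(G)\cong\bigoplus_{j=2}^{k}\mathbb{Z}_{n_j}^{(\chi_{c+1}(j)-\chi_{c+1}(j-1))}.
\]

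The step I expect to be the real obstacle is the reverse inclusion in the second paragraph — the assertion that each basic commutator, once multiplied by $n_{j(b)}$, is reachable from left-normed commutators starting with its largest letter. It is the only non-formal input (it uses the left-normed spanning property of the free Lie ring), and it is where the divisibility $n_{i+1}\mid n_i$ is decisive: it makes $n_{j(M)}$ the \emph{least} of the moduli $n_i$ attached to the letters of a content block, so that all generators $n_i[x_i,\dots]$ with $i\le j(M)$ land in, and those with $i=j(M)$ exactly fill, the submodule $n_{j(M)}\cdot\big(\bigoplus_b\mathbb{Z}b\big)$. A shortcut that sidesteps the Lie-ring bookkeeping is induction on $k$: writing $G\cong(\mathbb{Z}_{n_1}\oplus\cdots\oplus\mathbb{Z}_{n_{k-1}})\times\mathbb{Z}_{n_k}$ and using $\mathcal{M}^{(c)}(\mathbb{Z}_{n_k})=1$, Proposition~\ref{61} reduces the problem to $\Gamma_{c+1}(\mathbb{Z}_{n_1}\oplus\cdots\oplus\mathbb{Z}_{n_{k-1}},\mathbb{Z}_{n_k})$, which is the direct sum, over the weight-$(c+1)$ basic commutators that use the generator of $\mathbb{Z}_{n_k}$, of iterated tensor products of the cyclic groups named by their letters; since $n_k\mid n_i$ for all $i$, each such tensor product of cyclic groups collapses to $\mathbb{Z}_{n_k}$, and as there are $\chi_{c+1}(k)-\chi_{c+1}(k-1)$ of them, the induction closes.
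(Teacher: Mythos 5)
The paper does not actually prove this proposition: it is quoted from \cite{21} in the block introduced by ``we just need the results of the next three propositions, so they come without proof,'' so there is no in-paper argument to compare against. Your main argument is the standard free-presentation computation --- identify $\mathcal{M}^{(c)}(G)$ with $\gamma_{c+1}(F)/\gamma_{c+1}(R,F)$, pass to $\gamma_{c+1}(F)/\gamma_{c+2}(F)$ with its basic-commutator basis from Theorem~\ref{hh}, and pin down the image of $\gamma_{c+1}(R,F)$ block by block according to content --- and it is correct; this is essentially the route taken in the cited source. You correctly isolate the one genuinely non-formal input, namely that each multihomogeneous component of the free Lie ring is spanned over $\mathbb{Z}$ by the left-normed brackets beginning with any fixed letter occurring in the content (classical: the left-normed basis of the multilinear component together with specialization), and the divisibility hypothesis $n_{i+1}\mid n_i$ enters exactly where you say it does, making $n_{j(M)}$ the least modulus on each block. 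One caution about your proposed shortcut via Proposition~\ref{61}: it goes through under the reading of $\Gamma_{c+1}(A,B)$ in which each tensor slot carries the cyclic factor named by its letter, but the definition as printed in this paper puts the whole groups $A$ and $B$ into the slots while still indexing over all $d+d'$ generators, which overcounts (already for $c=1$ and $G=\mathbb{Z}_{n_1}\oplus\mathbb{Z}_{n_2}\oplus\mathbb{Z}_{n_3}$ it would give $\mathbb{Z}_{n_3}^{(4)}$ where the correct contribution is $\mathbb{Z}_{n_3}^{(2)}$); so if you take that route, quote Ellis's theorem in its precise form rather than relying on the paper's formulation.
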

\begin{cor}\label{8}
Let $G\cong \mathbb{Z}_{p^{m_1}}\oplus\ldots\oplus \mathbb{Z}_{p^{m_k}}$, where $m_1 \geq \ldots \geq m_k$. Then
\[
\mathcal{M}^{(c)}(G)\cong  \bigoplus_{i=2}^{k}\mathbb{Z}^{(\chi _{c+1}(i)-\chi _{c+1}(i-1))}_{p^{m_i}}.
\]

\end{cor}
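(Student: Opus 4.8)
The plan is to read this off directly from Proposition \ref{7}, since an abelian $p$-group is a special case of a finitely generated abelian group presented in invariant-factor form. First I would set $n_i = p^{m_i}$ and check the divisibility hypothesis of Proposition \ref{7}: for $1 \leq i \leq k-1$ the condition $n_{i+1}\mid n_i$ is precisely $p^{m_{i+1}}\mid p^{m_i}$, i.e. $m_{i+1}\leq m_i$, which is exactly the assumption $m_1\geq\cdots\geq m_k$. So Proposition \ref{7} applies verbatim and gives
\[
\mathcal{M}^{(c)}(G)\cong \mathbb{Z}^{(\chi _{c+1}(2))}_{p^{m_2}} \oplus \mathbb{Z}^{(\chi _{c+1}(3)-\chi _{c+1}(2))}_{p^{m_3}} \oplus\cdots \oplus \mathbb{Z}^{(\chi _{c+1}(k)-\chi _{c+1}(k-1))}_{p^{m_k}}.
\]

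What remains is only to rewrite the right-hand side as a single sum $\bigoplus_{i=2}^{k}\mathbb{Z}^{(\chi_{c+1}(i)-\chi_{c+1}(i-1))}_{p^{m_i}}$. For $i=3,\dots,k$ the $i$-th summand above already has this exponent, so the one point to verify is that the first summand $\mathbb{Z}^{(\chi_{c+1}(2))}_{p^{m_2}}$ equals $\mathbb{Z}^{(\chi_{c+1}(2)-\chi_{c+1}(1))}_{p^{m_2}}$, i.e. that $\chi_{c+1}(1)=0$. By the Witt formula, $\chi_{c+1}(1)=\frac{1}{c+1}\sum_{m\mid c+1}\mu(m)$, and since $\sum_{m\mid n}\mu(m)=0$ for every $n>1$, we get $\chi_{c+1}(1)=0$ for all $c\geq 1$ (equivalently: there are no basic commutators of weight $\geq 2$ on a single generator). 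Substituting this back gives the desired form.

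There is essentially no obstacle here: the corollary is just the restatement of Proposition \ref{7} in the case of abelian $p$-groups, and the only mildly non-obvious ingredient is the identity $\chi_{c+1}(1)=0$, which is immediate from Möbius inversion and has the transparent combinatorial meaning just noted. If one wants to be fully self-contained, one could alternatively note that the numbers $\chi_{c+1}(i)-\chi_{c+1}(i-1)$ count the basic commutators of weight $c+1$ on $i$ ordered letters that actually involve the $i$-th letter, and match these up with the summands produced by Proposition \ref{7}; but invoking Proposition \ref{7} as a black box is the shortest route.
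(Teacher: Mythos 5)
Your proof is correct and matches the paper's (implicit) argument: the paper states Corollary \ref{8} with no proof, treating it as the immediate specialization of Proposition \ref{7} to abelian $p$-groups in invariant-factor form, exactly as you do. Your observation that $\chi_{c+1}(1)=0$ (needed to fold the first summand into the uniform expression $\chi_{c+1}(i)-\chi_{c+1}(i-1)$ at $i=2$) is the only detail worth recording, and it is right.
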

\section{$c$-nilpotent multipliers of finite $p$-groups}
In this section, we intend to obtain an upper bound for the order of the $c$-nilpotent multiplier of a finite $p$-group. While we're working on the $p$-groups, the extra-special $p$-groups can not be overlooked. At first, we compute the $c$-nilpotent multipliers of these groups. The following theorem of Beyl et al. to see the analogies between the Schur multiplier and the $c$-nilpotent multiplier of extra-special $p$-groups (see Theorem \ref{14}).
\begin{thm}\label{9}\textnormal{(Beyl and Tappe 1982) \cite[Theorem 3.3.6]{18}}
  Let $G$ be an extra-special $p$-group of order $p^{2n+1}$.
\begin{itemize}
\item[\textnormal{(i)}] If $n>1$, then $G$ is unicentral and $\mathcal{M}(G)$ is an elementary abelian $p$-group of order $p^{2n^2 -n-1}$.
\item[\textnormal{(ii)}] Suppose that $G$  of order $p^3$ and $p$ is odd. Then $\mathcal{M}(G)\cong\mathbb{Z}_p \times \mathbb{Z}_p $ if $G$ is of exponent $p$ and $\mathcal{M}(G)=0$ if $G$ is of exponent $p^2$.
\item[\textnormal{(iii)}] The quaternion group of order 8, $ Q_8, $ has trivial multiplier, whereas the multiplier of the dihedral group of order $8$, $ D_8 $, is of order $2$.
\end{itemize}
\end{thm}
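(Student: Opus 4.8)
\medskip
\noindent\textbf{Proof strategy.}
I would separate the small case $|G|=p^{3}$ from the generic case $n>1$. When $|G|=p^{3}$ the group is, up to isomorphism, the Heisenberg group $\langle a,b\mid a^{p}=b^{p}=[a,b]^{p}=1,\ [a,[a,b]]=[b,[a,b]]=1\rangle$ of exponent $p$ or the modular group $\langle a,b\mid a^{p^{2}}=b^{p}=1,\ a^{b}=a^{1+p}\rangle$ of exponent $p^{2}$ for odd $p$, and $D_{8}$ or $Q_{8}$ for $p=2$. In each of these four cases one fixes the displayed two-generator free presentation $1\to R\to F\to G\to 1$, lists the basic commutators of $F$ of weight at most $3$ using Theorem~\ref{hh}, and computes $\mathcal{M}(G)=(R\cap\gamma_{2}(F))/[R,F]$ directly from the Hopf formula; this finite bookkeeping returns $\mathbb{Z}_{p}\oplus\mathbb{Z}_{p}$, $0$, $\mathbb{Z}_{2}$ and $0$, respectively. (Equivalently one may apply Proposition~\ref{5} with $B=Z(G)$ and $c=1$, using $\mathcal{M}(\mathbb{Z}_{p}\oplus\mathbb{Z}_{p})\cong\mathbb{Z}_{p}$ from Corollary~\ref{8}.)

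For $n>1$ put $Z=Z(G)$, so that $Z=G'=\gamma_{2}(G)$ has order $p$ and, since $\Phi(G)=Z$, the quotient $V:=G/Z$ is elementary abelian of rank $2n$. Proposition~\ref{5} with $B=Z$ and $c=1$ yields the exact sequence
\[
Z\wedge G\;\longrightarrow\;\mathcal{M}(G)\;\longrightarrow\;\mathcal{M}(V)\;\longrightarrow\;Z\cap\gamma_{2}(G)\;\longrightarrow\;1,
\]
in which $Z\cap\gamma_{2}(G)=Z\cong\mathbb{Z}_{p}$. The claim is that the whole of (i) reduces to showing that the connecting homomorphism $Z\wedge G\to\mathcal{M}(G)$ is the zero map. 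Indeed, take a free presentation $1\to R\to F\to G\to 1$ and let $t\in F$ be such that $tR$ generates $Z$; unravelling the construction, the image of that connecting map inside $\mathcal{M}(G)=(R\cap\gamma_{2}(F))/[R,F]$ is $\big([\langle t\rangle^{F},F]\,[R,F]\big)/[R,F]$, and this is trivial exactly when $[t,F]\subseteq[R,F]$, i.e.\ exactly when $tR$ lies in the epicentre $Z^{*}(G)$; since $Z^{*}(G)\leq Z(G)$ and $|Z(G)|=p$, this in turn is equivalent to $Z^{*}(G)=Z(G)$, that is, to $G$ being unicentral. Thus the single inclusion $[t,F]\subseteq[R,F]$ gives at once unicentrality and the embedding $\mathcal{M}(G)\hookrightarrow\mathcal{M}(V)$.

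To prove $[t,F]\subseteq[R,F]$ I would exploit the structure theorem writing $G$ (for $n\geq2$) as a central product of $n$ extra-special groups of order $p^{3}$ amalgamating their centres: take $F$ free on $a_{1},b_{1},\ldots,a_{n},b_{n}$, with $R$ containing for each block its order and class-$2$ relations, for distinct blocks $i\neq j$ the commuting relations $[a_{i},a_{j}],[a_{i},b_{j}],[b_{i},a_{j}],[b_{i},b_{j}]\in R$, and the amalgamating relations $[a_{i},b_{i}][a_{i+1},b_{i+1}]^{-1}\in R$; here $t=[a_{1},b_{1}]$. As $G$ has class $2$ one has $\gamma_{3}(F)\subseteq R$, hence $\gamma_{4}(F)\subseteq[R,F]$, so every computation may be read modulo $\gamma_{4}(F)$. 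Expanding $[a_{1},\,[a_{1},b_{1}][a_{2},b_{2}]^{-1}]$ gives, modulo $\gamma_{4}(F)$, the element $[[a_{1},b_{1}],a_{1}]^{-1}\,[[a_{2},b_{2}],a_{1}]$; the left-hand side lies in $[R,F]$ because $[a_{1},b_{1}][a_{2},b_{2}]^{-1}\in R$, while a Jacobi-identity expansion of $[[a_{2},b_{2}],a_{1}]$ combined with the cross-block relations $[a_{1},a_{2}],[a_{1},b_{2}]\in R$ shows $[[a_{2},b_{2}],a_{1}]\in[R,F]$ as well; hence $[[a_{1},b_{1}],a_{1}]\in[R,F]$. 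Running the same argument with $b_{1}$ in place of $a_{1}$, and disposing of the cross-block commutators $[[a_{1},b_{1}],a_{k}],[[a_{1},b_{1}],b_{k}]$ ($k\neq1$) by the analogous Jacobi expansion, gives $[t,F]\subseteq[R,F]$. Making this commutator bookkeeping airtight --- tracking the $\gamma_{4}(F)$ error terms and covering both isomorphism types uniformly (in the modular or $Q_{8}$ block one still has $t=[a,b]$) --- is the main obstacle; alternatively one may simply quote the known classification of capable extra-special $p$-groups, from which $Z^{*}(G)=Z(G)$ for $n>1$ is immediate.

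Given the embedding $\mathcal{M}(G)\hookrightarrow\mathcal{M}(V)$, part (i) follows immediately: by Corollary~\ref{8} (taking $c=1$ and all the exponents equal to $1$) and the Witt formula, $\mathcal{M}(V)\cong\mathbb{Z}_{p}^{(\chi_{2}(2n))}$ with $\chi_{2}(2n)=\binom{2n}{2}=2n^{2}-n$, so the exact sequence gives $|\mathcal{M}(G)|=|\mathcal{M}(V)|/|Z|=p^{\,2n^{2}-n-1}$; and $\mathcal{M}(G)$, being a subgroup of the elementary abelian group $\mathcal{M}(V)$, is itself elementary abelian, while unicentrality has already been established. Together with the four computations of the first paragraph, this proves the theorem.
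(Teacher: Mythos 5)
The paper does not prove this statement at all: Theorem \ref{9} is imported verbatim by citation from Beyl--Tappe via Karpilovsky's monograph, so there is no in-paper argument to compare yours against. Judged on its own terms, your proposal is essentially correct and is in fact close to the classical proof. The reduction of part (i) to the single inclusion $[t,F]\subseteq [R,F]$ is sound: the kernel of $\mathcal{M}(G)\to\mathcal{M}(G/Z)$ is exactly $[S,F]/[R,F]$ with $S=\langle t\rangle R$, so vanishing of the connecting map simultaneously gives $Z(G)\subseteq Z^{*}(G)$ (hence unicentrality, since $Z^{*}(G)\leq Z(G)$ and $|Z(G)|=p$) and the embedding $\mathcal{M}(G)\hookrightarrow\mathcal{M}(V)$, after which the order $p^{2n^{2}-n-1}$ and elementary-abelianness drop out of $|\mathcal{M}(V)|=p^{\chi_{2}(2n)}=p^{2n^{2}-n}$ and the tail $\mathcal{M}(V)\twoheadrightarrow Z\cap\gamma_{2}(G)\cong\mathbb{Z}_{p}$. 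The Hall--Witt computation exploiting the amalgamation relation $[a_{1},b_{1}][a_{2},b_{2}]^{-1}\in R$ together with the cross-block commutators is exactly where the hypothesis $n>1$ enters, and it goes through uniformly for all block types since only cross-block and amalgamation relations are used; note that for $n=1$ the argument genuinely fails, consistent with $E_{1}$ and $D_{8}$ being capable. The two acknowledged gaps --- the four order-$p^{3}$ Hopf-formula computations and the $\gamma_{4}(F)$ error-term bookkeeping --- are routine; just make sure in the Witt expansion that you check $[t,f]$ only on free generators, which is legitimate because $[t,\cdot]$ is additive modulo $\gamma_{4}(F)\subseteq[R,F]$. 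If you prefer to shortcut the commutator calculus by quoting the Beyl--Felgner--Schmid classification of capable extra-special groups (as the paper itself does elsewhere), state that explicitly, since otherwise that classification is usually derived from the very multiplier computation you are proving.
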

The concept of epicenter $Z^*(G)$ is defined by Beyl et al. in \cite{3}. It gives a
criterion for detecting capable groups. In fact $G$ is capable if and only if $Z^*(G) = 1.$
Ellis in \cite{8} defined the exterior center $Z^{\wedge}(G)$ of $G$ the set of all elements $g$ of $G$ for which
$g \wedge h = 1_{G\wedge G}$ for all $h \in  G$ and he showed $Z^*(G) = Z^{\wedge}(G).$\newline
Using the next lemma, the $c$-nilpotent multiplier of non-capable extra-special $p$-groups can be easily computed.

\begin{lem}\label{41}$($See \cite[Theorem 4.2]{3}$)$
Let $N$ be a central subgroup of a group $G$. Then $ N\subseteq Z^{\wedge}(G)$ if and only if $\mathcal{M}(G)\rightarrow \mathcal{M}(G/N)$ is a monomorphism.
\end{lem}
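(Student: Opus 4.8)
The plan is to fix a free presentation $1 \to R \to F \to G \to 1$, write $N = S/R$ with $R \le S \le F$, and note that centrality of $N$ in $G$ means precisely $[S,F]\subseteq R$, while $G/N \cong F/S$. I would then compute both sides of the asserted equivalence inside $F$ and observe that each one reduces to the single inclusion $[S,F]\subseteq[R,F]$.

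For the multiplier map, by Hopf's formula $\mathcal{M}(G)\cong (F'\cap R)/[R,F]$ (the case $c=1$ of the definition of $\mathcal{M}^{(c)}$ given in the Introduction) and similarly $\mathcal{M}(G/N)\cong (F'\cap S)/[S,F]$, and the natural map $\mathcal{M}(G)\to\mathcal{M}(G/N)$ is induced by the inclusions $F'\cap R\subseteq F'\cap S$ and $[R,F]\subseteq[S,F]$, so it sends $w[R,F]\mapsto w[S,F]$. Its kernel is thus $(F'\cap R\cap[S,F])/[R,F]$; since $[S,F]\subseteq[F,F]=F'$ always and $[S,F]\subseteq R$ by centrality, the kernel is simply $[S,F]/[R,F]$. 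As $R\subseteq S$ gives $[R,F]\subseteq[S,F]$, we conclude that $\mathcal{M}(G)\to\mathcal{M}(G/N)$ is a monomorphism if and only if $[S,F]\subseteq[R,F]$.

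For the exterior center, I would use the standard presentation-level description $G\wedge G\cong\gamma_2(F)/[R,F]$, under which the generator $(xR)\wedge(yR)$ corresponds to $[x,y]\,[R,F]$ (independence of the chosen lifts $x,y\in F$ being the routine verification that $[xr,ys]\equiv[x,y]\bmod[R,F]$ for $r,s\in R$). With this, $xR\in Z^{\wedge}(G)$ exactly when $[x,y]\in[R,F]$ for every $y\in F$, i.e. $[x,F]\subseteq[R,F]$; hence $N=S/R\subseteq Z^{\wedge}(G)$ if and only if $[S,F]\subseteq[R,F]$, which is precisely the condition obtained in the previous paragraph. This proves the equivalence.

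The one ingredient that needs care is the identification $G\wedge G\cong\gamma_2(F)/[R,F]$ together with the commutator correspondence; granting it, the remainder is elementary manipulation of commutator subgroups. Alternatively, to stay within the results quoted in the paper, one can apply Proposition \ref{5} with $c=1$ to get $\ker\big(\mathcal{M}(G)\to\mathcal{M}(G/N)\big)=\mathrm{Im}\big(N\wedge G\to\mathcal{M}(G)\big)$, and then the task is to verify that this map, composed with $\mathcal{M}(G)\hookrightarrow G\wedge G$, is the natural map $N\wedge G\to G\wedge G$ induced by $N\hookrightarrow G$ — which lands in $\mathcal{M}(G)$ because centrality of $N$ forces $n\wedge g\mapsto[n,g]=1$. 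Its image is then trivial exactly when $n\wedge g=1$ in $G\wedge G$ for all $n\in N$, $g\in G$, i.e. exactly when $N\subseteq Z^{\wedge}(G)$; in this variant the delicate point is the identification of that connecting map.
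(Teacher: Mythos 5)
Your argument is correct, but it is worth noting that the paper does not prove this lemma at all: it is quoted from Beyl--Felgner--Schmid \cite[Theorem~4.2]{3}, where it is stated in terms of the epicenter $Z^{*}(G)$, and then transported to the exterior center via Ellis's identification $Z^{*}(G)=Z^{\wedge}(G)$ from \cite{8}. Your route is a direct, self-contained verification at the level of a free presentation: both the injectivity of $\mathcal{M}(G)\to\mathcal{M}(G/N)$ (via Hopf's formula, whose kernel you correctly compute as $[S,F]/[R,F]$ using $[S,F]\subseteq R\cap F'$) and the condition $N\subseteq Z^{\wedge}(G)$ reduce to the single inclusion $[S,F]\subseteq[R,F]$. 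The only nonelementary input is the identification $G\wedge G\cong\gamma_2(F)/[R,F]$ sending $xR\wedge yR$ to $[x,y][R,F]$; this is a genuine theorem (essentially Miller's theorem, in the Brown--Loday formulation) rather than a routine computation, and you rightly flag it as the load-bearing step. Granting it, the proof is complete and arguably more transparent than chasing the definitions of $Z^{*}$; the paper's citation buys brevity and avoids importing that presentation of the exterior square, while your version makes the equivalence visibly a statement about one commutator inclusion. Your alternative via Proposition~\ref{5} with $c=1$ is also sound, with the analogous caveat that the connecting map $N\wedge G\to\mathcal{M}(G)$ must be identified with the map induced by $N\hookrightarrow G$.
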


\begin{thm}\label{10}
Let $G$ be a non-capable extra-special $p$-group of order $ p^n$ and $c\geq 2$. Then
\[
\mathcal{M}^{(c)}(G)\cong \mathcal{M}^{(c)}(G/G').
\]
\end{thm}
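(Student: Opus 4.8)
The plan is to exploit the fact that a non-capable extra-special $p$-group has a nontrivial epicenter, and that for extra-special $p$-groups the epicenter is forced to be exactly the derived subgroup $G'$, which is the unique minimal (and central) subgroup. First I would recall that $G' = Z(G)$ has order $p$ when $n \geq 3$ is odd and $G/G'$ is elementary abelian of rank $n-1$. Since $G$ is non-capable, $Z^*(G) = Z^{\wedge}(G) \neq 1$; as $G'$ is the unique subgroup of order $p$ contained in $Z(G)$ and $Z^*(G) \leq Z(G)$, we get $G' \subseteq Z^{\wedge}(G)$. By Lemma \ref{41}, this is equivalent to the statement that $\mathcal{M}(G) \to \mathcal{M}(G/G')$ is a monomorphism; combined with $G' \cap \gamma_2(G) = G' \neq 1$ and the exact sequence of Proposition \ref{5} in the case $c=1$, one reads off that $G' \wedge G \to \mathcal{M}(G)$ is trivial, i.e. $G' \wedge_1 G = 1$ effectively maps to zero.

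The next step is to climb from $c=1$ to general $c \geq 2$ using Proposition \ref{5} with $B = G'$. The exact sequence
\[
G' \wedge_{c} G \longrightarrow \mathcal{M}^{(c)}(G) \longrightarrow \mathcal{M}^{(c)}(G/G') \longrightarrow G' \cap \gamma_{c+1}(G) \longrightarrow 1
\]
shows that $\mathcal{M}^{(c)}(G) \to \mathcal{M}^{(c)}(G/G')$ is an isomorphism provided two things hold: (a) $G' \cap \gamma_{c+1}(G) = 1$, and (b) the map $G' \wedge_{c} G \to \mathcal{M}^{(c)}(G)$ is trivial. For (a), note that $G$ has nilpotency class $2$, so $\gamma_{3}(G) = 1$ and hence $\gamma_{c+1}(G) = 1$ for all $c \geq 2$; thus $G' \cap \gamma_{c+1}(G) = 1$ trivially, and the surjection $\mathcal{M}^{(c)}(G) \to \mathcal{M}^{(c)}(G/G')$ is already established. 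It remains to prove injectivity, which is exactly (b).

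For (b) I would argue that $G' \wedge_c G$ maps trivially into $\mathcal{M}^{(c)}(G)$ by a dimension/order count combined with Theorem \ref{4} (or Corollary \ref{2}) applied to $B = G' \subseteq Z(G)$, $A = G/G'$. From Theorem \ref{4} we get
\[
|\mathcal{M}^{(c)}(G)|\,|\gamma_{c+1}(G) \cap G'| \text{ divides } |\mathcal{M}^{(c)}(A)|\,|\mathcal{M}^{(c)}(G')|\,\big|\, G' \otimes_c (G/G')^{ab}\,\big|,
\]
and since $G'$ is cyclic of order $p$, $\mathcal{M}^{(c)}(G') = 0$ and $\gamma_{c+1}(G) \cap G' = 1$; so $|\mathcal{M}^{(c)}(G)|$ divides $|\mathcal{M}^{(c)}(A)|\cdot|G' \otimes_c (G/G')^{ab}|$. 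On the other hand Corollary \ref{2} with the same data gives $|\mathcal{M}^{(c)}(A)|$ divides $|\mathcal{M}^{(c)}(G)|$ (using again $\gamma_{c+1}(G) \cap G' = 1$). The surjectivity from Proposition \ref{5} already gives $|\mathcal{M}^{(c)}(A)| \le |\mathcal{M}^{(c)}(G)|$; to force equality, hence triviality of the first map in the exact sequence, I would show that the image of $G' \wedge_c G$ in $\mathcal{M}^{(c)}(G)$ is annihilated — this is where the non-capability hypothesis must be fed in again, propagating the $c=1$ vanishing of $G' \wedge G \to \mathcal{M}(G)$ through the recursive definition $N \wedge_{c} G = (N \wedge_{c-1} G) \wedge G$ together with the observation that the relevant commutator images $[s, f_1, \ldots, f_c]$ modulo $\gamma_{c+1}(R,F)$ already lie in the subgroup killed at the first stage (class-$2$ collapse makes all higher commutators involving an element of $G'$ trivial).

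The main obstacle I anticipate is step (b): carefully justifying that the connecting map $G' \wedge_c G \to \mathcal{M}^{(c)}(G)$ is the zero map rather than merely having small image. The clean way is to reduce it to $c=1$ via naturality of the exact sequence in Proposition \ref{5} under the iterated exterior product, using that $G' \wedge G = 1$ holds in the relevant quotient because $G' \subseteq Z^{\wedge}(G)$ — equivalently, every commutator $[g', g]$ with $g' \in G'$ is trivial in $G \wedge G$, so all iterated exterior products starting from $G'$ vanish. Once that is in place the isomorphism $\mathcal{M}^{(c)}(G) \cong \mathcal{M}^{(c)}(G/G')$ follows immediately from exactness, and since $G/G' = G^{ab}$ this is the claimed statement.
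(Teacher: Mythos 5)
Your skeleton coincides with the paper's: apply the exact sequence of Proposition \ref{5} with $B=G'$, use that $G$ has nilpotency class $2$ so that $G'\cap\gamma_{c+1}(G)=1$ for $c\geq 2$, and then show that the connecting map $\tau\colon G'\wedge_c G\to\mathcal{M}^{(c)}(G)$ has trivial image. The paper simply asserts $\mathrm{Im}\,\tau=0$; you correctly isolate this as the crux, but none of the justifications you offer for it is sound as written. The parenthetical claim that ``class-$2$ collapse makes all higher commutators involving an element of $G'$ trivial'' concerns commutators in $G$, whereas $\mathrm{Im}\,\tau$ lives in $\mathcal{M}^{(c)}(G)=\bigl(R\cap\gamma_{c+1}(F)\bigr)/\gamma_{c+1}(R,F)$, where higher commutators emphatically do not collapse --- Theorem \ref{12} shows $\mathcal{M}^{(c)}(E_1)$ is large precisely because basic commutators of weights $c+1$ and $c+2$ survive in $F$ modulo $[R,\,_cF]$. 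Similarly, ``$g'\wedge g=1$ in $G\wedge G$, so all iterated exterior products starting from $G'$ vanish'' is a non sequitur: $G'\wedge_cG=(G'\wedge_{c-1}G)\wedge G$ is a freshly presented group at each stage, not a subgroup of $G\wedge G$, so the vanishing of the image of $G'\wedge G$ in $G\wedge G$ does not by itself kill the image of $G'\wedge_cG$ in $\mathcal{M}^{(c)}(G)$. The order-counting detour through Theorem \ref{4} and Corollary \ref{2} only bounds $|\mathcal{M}^{(c)}(G)|$ from above and cannot force the first map in the sequence to vanish, as you yourself concede.

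The missing step has a short correct proof, which your phrase ``already lie in the subgroup killed at the first stage'' gestures at. Write $G=F/R$ and $G'=S/R$. Since $G$ is extra-special, $Z(G)=G'$ has order $p$, so non-capability forces $Z^{*}(G)=G'$; by the defining property of the epicentre (equivalently, by Lemma \ref{41} together with the $c=1$ instance of Proposition \ref{5}) this is exactly the containment $[S,F]\subseteq[R,F]$. Consequently
\[
\gamma_{c+1}(S,F)=\bigl[[S,F],\,_{c-1}F\bigr]\subseteq\bigl[[R,F],\,_{c-1}F\bigr]=\gamma_{c+1}(R,F),
\]
and since $\tau$ sends a generator to the class of $[s,f_1,\ldots,f_c]$ modulo $\gamma_{c+1}(R,F)$, its image is contained in $\gamma_{c+1}(S,F)\gamma_{c+1}(R,F)/\gamma_{c+1}(R,F)=1$. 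With this inserted, your argument closes and agrees with the paper's proof.
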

\begin{proof}
Since $G$ is a non-capable extra-special $p$-group, $Z^{*}(G)=G'$. Now by virtue of Proposition \ref{5}, the sequence
\[
G' \wedge_{c} G \stackrel{\tau}{\longrightarrow} \mathcal{M}^{(c)}(G)\longrightarrow \mathcal{M}^{(c)}(G^{ab})\longrightarrow
 G'\cap \gamma_{c+1}(G) \longrightarrow 1
 \]
 is exact. The rest of proof is obtained by the fact that $G$ is nilpotent of class $2$ and Im $\tau=0$. Therefore
 \[\mathcal{M}^{(c)}(G)\cong \mathcal{M}^{(c)}(G/G').\]
 \end{proof}
\begin{cor}
Let $G$ be a non-capable extra-special $p$-group of order $p^{2n+1}$ and $c\geq 2$. Then $$\mathcal{M}^{(c)}(G)\cong \mathbb{Z}_p^{\big(\chi _{c+1}(2n)\big)}.$$
\end{cor}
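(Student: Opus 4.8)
The plan is to reduce the whole computation to the abelianization by means of Theorem \ref{10}, and then to read off the answer from Corollary \ref{8}. First I would invoke Theorem \ref{10}: since $G$ is a non-capable extra-special $p$-group and $c\geq 2$, that theorem gives
\[
\mathcal{M}^{(c)}(G)\cong \mathcal{M}^{(c)}(G/G').
\]
Next I would use the standard structural facts about extra-special $p$-groups: if $|G|=p^{2n+1}$ then $G'=Z(G)$ is cyclic of order $p$ and $G/G'$ is elementary abelian, so $G/G'\cong \mathbb{Z}_p^{(2n)}$. Hence $\mathcal{M}^{(c)}(G)\cong \mathcal{M}^{(c)}(\mathbb{Z}_p^{(2n)})$.

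Then I would apply Corollary \ref{8} to the elementary abelian group $\mathbb{Z}_p^{(2n)}$, taking $k=2n$ and $m_1=\cdots=m_k=1$, which yields
\[
\mathcal{M}^{(c)}(\mathbb{Z}_p^{(2n)})\cong \bigoplus_{i=2}^{2n}\mathbb{Z}_p^{(\chi_{c+1}(i)-\chi_{c+1}(i-1))}.
\]
Every summand has exponent $p$, so this is an elementary abelian $p$-group whose rank is the telescoping sum $\sum_{i=2}^{2n}\bigl(\chi_{c+1}(i)-\chi_{c+1}(i-1)\bigr)=\chi_{c+1}(2n)-\chi_{c+1}(1)$. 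Finally, since there are no basic commutators of weight $c+1\geq 3$ on a single generator, the Witt formula gives $\chi_{c+1}(1)=0$, and therefore the rank equals $\chi_{c+1}(2n)$, so $\mathcal{M}^{(c)}(G)\cong \mathbb{Z}_p^{(\chi_{c+1}(2n))}$.

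I do not expect a genuine obstacle here: the statement is a direct consequence of Theorem \ref{10} together with the explicit computation in Corollary \ref{8}, the only minor points being the elementary structural description of $G/G'$ and the vanishing $\chi_{c+1}(1)=0$ needed to collapse the telescoping sum.
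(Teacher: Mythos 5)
Your proof is correct and follows exactly the paper's route: the paper also deduces the result by combining Theorem \ref{10} with Corollary \ref{8}, and you have merely filled in the routine details (that $G/G'\cong\mathbb{Z}_p^{(2n)}$, the telescoping of the ranks, and $\chi_{c+1}(1)=0$) that the paper leaves implicit.
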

\begin{proof}
By  Corollary \ref{8}  and Theorem \ref{10}, we have $\mathcal{M}^{(c)}(G)\cong \mathbb{Z}_p^{\big(\chi_{c+1}(2n)\big)}.$
\end{proof}

There are only two capable extra-special $p$-groups, $D_8$ and $E_1,$ in which $E_1$ is the extra-special $p$-group of order $p^3$ and exponent $p$ for odd $p,$ in \cite{3} . In the following we compute $\mathcal{M}^{(c)}(E_1)$ which has the following presentation.
\[
E_1=\langle x, y|x^p=y^p=[y,x]^p=[y,x,y]=[y,x,x]=1\rangle.
\]
We have $\mathcal{M}^{(c)}(E_1)=\dfrac{R\cap \gamma_{c+1}(F)}{[R,_{c}F]}$  in which $F$ is the free group on the set $\{ x,y\}$ and
$R=\langle x^p, y^p,[y,x]^p,[y,x,y],[y,x,x]\rangle ^F.$\\
Since $E_1$ is nilpotent of class 2, we have $\gamma_{c+1}(F)\subseteq \gamma_3(F)\subseteq R$. Therefore for every $c\geq 2$,
\[
\mathcal{M}^{(c)}(E_1)=\dfrac{\gamma_{c+1}(F)}{[R,_{c}F]}\cong \dfrac{\dfrac{\gamma_{c+1}(F)}{\gamma_{c+3}(F)}}{\dfrac{[R,_{c}F]}{\gamma_{c+3}(F)}}.
\]
We know that $\dfrac{\gamma_{c+1}(F)}{\gamma_{c+3}(F)}$ is the free abelian group with the basis of all basic commutators of weights $c+1$ and $c+2$ on $\{x,y\}$, by Theorem \ref{hh}.
\begin{thm}\label{11}
With the above notations and assumptions we have
\[
[R,_{c}F]\equiv \Big{(} \gamma_{c+1}(F)\Big{)}^p \pmod{\gamma_{c+3}(F)}, ~~for ~all~c\geq 2.
\]
\end{thm}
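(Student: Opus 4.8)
The plan is to work in the free nilpotent group $F/\gamma_{c+3}(F)$ and compare the normal closure $[R,{}_c F]$ with $(\gamma_{c+1}(F))^p$ modulo $\gamma_{c+3}(F)$ by a direct basic-commutator calculation. First I would recall that $R = \langle x^p, y^p, [y,x]^p, [y,x,y], [y,x,x]\rangle^F$, so modulo $\gamma_3(F)$ the group $R$ is generated by $x^p, y^p$ together with the whole of $\gamma_3(F)$ (since $[y,x,y],[y,x,x]$ generate $\gamma_3(F)$ modulo $\gamma_4(F)$, and in fact generate $\gamma_3(F)$ as a normal subgroup together with lower terms). Consequently $[R,{}_c F]$ is generated modulo $\gamma_{c+3}(F)$ by the commutators $[x^p,{}_c F]$, $[y^p,{}_c F]$, and $[\gamma_3(F),{}_c F] = \gamma_{c+3}(F)$, which is trivial in the quotient. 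So the whole computation reduces to understanding $[x^p, f_1,\dots,f_c]$ and $[y^p, f_1,\dots,f_c]$ modulo $\gamma_{c+3}(F)$ for $f_i \in F$.

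The key step is the commutator identity: in any group, $[a^p, b] \equiv [a,b]^p \pmod{\gamma_3(\langle a,b\rangle)}$, and more precisely, iterating, $[a^p, f_1,\dots,f_c] \equiv [a,f_1,\dots,f_c]^p$ modulo commutators of weight $\geq c+2$ that involve $a$ at least twice — but any commutator of weight $c+2$ in $x,y$ already lies in $\gamma_{c+2}(F)$, and the "error" terms from the Hall–Petrescu collection formula involving $a$ twice have weight $\geq c+2$ and sit in $\gamma_{c+3}(F)$ once we also use that the binomial coefficients $\binom{p}{k}$ for $2 \le k \le p-1$ are divisible by $p$, combined with $(\gamma_{c+1}(F))^p$ absorbing the weight-$(c+1)$ part. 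Thus $[x^p,{}_c F] \equiv (\gamma_{c+1}(F) \text{ generated by commutators starting with } x)^p$, similarly for $y$, and together these $p$-th powers of basic commutators of weight $c+1$ generate $(\gamma_{c+1}(F))^p \pmod{\gamma_{c+3}(F)}$ because $\gamma_{c+1}(F)/\gamma_{c+3}(F)$ is free abelian on basic commutators of weights $c+1$ and $c+2$, and every weight-$(c+1)$ basic commutator begins with $x$ or $y$.

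For the reverse inclusion I would check that each generator $[w,{}_c F]$ with $w \in \{x^p, y^p, [y,x]^p\}$ actually lands inside $(\gamma_{c+1}(F))^p \gamma_{c+3}(F)$: the first two by the identity above, and $[[y,x]^p,{}_c F] \subseteq \gamma_{c+3}(F)$ trivially since $[y,x]$ has weight $2$. Hence $[R,{}_c F]\gamma_{c+3}(F) = (\gamma_{c+1}(F))^p \gamma_{c+3}(F)$, which is the claim.

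The main obstacle I expect is making the commutator-collection estimate rigorous: one must control precisely which correction terms arise when expanding $[a^p, f_1, \ldots, f_c]$ via the Hall–Petrescu formula and verify that all of them, after accounting for the factor $p$ on the mixed binomial coefficients, lie in $(\gamma_{c+1}(F))^p\gamma_{c+3}(F)$ rather than merely in $\gamma_{c+1}(F)$. The cleanest route is probably to prove the auxiliary statement $[a^p,{}_k F] \equiv ([a,{}_k F])^p \pmod{\gamma_{k+2}(F')\text{-type terms}}$ by induction on $k$, using at each step that a $p$-th power times a weight-$(k{+}2)$-or-higher commutator behaves well, and then specialize to $k=c$ and reduce modulo $\gamma_{c+3}(F)$; here the hypothesis $c \ge 2$ is what guarantees $\gamma_{c+1}(F) \subseteq \gamma_3(F) \subseteq R$, so that the quotient description of $\mathcal{M}^{(c)}(E_1)$ used above is valid.
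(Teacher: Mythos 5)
Your strategy---a direct basic-commutator computation of $[R,{}_{c}F]$ modulo $\gamma_{c+3}(F)$---could in principle be pushed through, but as written it has a concrete false step and leaves the real content unproved. The false step is the claim that $[[y,x]^p,{}_{c}F]\subseteq \gamma_{c+3}(F)$ ``trivially''. Expanding as you do for $x^p$ and $y^p$, one gets $[[y,x]^p,f_1,\ldots,f_c]\equiv [[y,x],f_1,\ldots,f_c]^p$ modulo higher-weight terms, and this lies in $\big(\gamma_{c+2}(F)\big)^p$, which is generally nontrivial modulo $\gamma_{c+3}(F)$ because $\gamma_{c+2}(F)/\gamma_{c+3}(F)$ is free abelian. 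Discarding these elements also damages your forward inclusion: $\gamma_{c+1}(F)/\gamma_{c+3}(F)$ is free abelian on the basic commutators of weights $c+1$ \emph{and} $c+2$, so $\big(\gamma_{c+1}(F)\big)^p\gamma_{c+3}(F)/\gamma_{c+3}(F)$ has a weight-$(c+2)$ layer as well; your argument (``every weight-$(c+1)$ basic commutator begins with $x$ or $y$'') only produces the $p$-th powers of the weight-$(c+1)$ layer, and the terms you threw away (equivalently, the commutators $[x^p,f_1,\ldots,f_c]$ with some $f_i\in\gamma_2(F)$, which you do not analyse) are exactly what must supply the rest. Finally, you yourself flag the Hall--Petrescu collection estimate as the ``main obstacle''; in your approach that estimate \emph{is} the theorem, so the proof is not complete. (When you do carry it out, note that the divisibility $p\mid\binom{p}{2}$ you invoke requires $p$ odd, which is legitimate here since $E_1$ is only defined for odd $p$.)

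The paper takes a much shorter route that you should compare with. It inducts on $c$: the case $c=2$ is quoted from \cite[Theorem 2.6]{29A}, and the inductive step is the single computation
\[
[R,{}_{c+1}F]=\big[[R,{}_{c}F],F\big]\equiv \big[(\gamma_{c+1}(F))^p,F\big]\equiv \big(\gamma_{c+2}(F)\big)^p \pmod{\gamma_{c+4}(F)},
\]
which needs only the elementary congruence $[u^p,f]\equiv[u,f]^p$ modulo $[\gamma_{c+2}(F),\gamma_{c+1}(F)]\subseteq\gamma_{2c+3}(F)\subseteq\gamma_{c+4}(F)$---no collection formula, no case analysis on the generators of $R$. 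If you want a self-contained argument, either repair the points above, or adopt the induction and confine the hard commutator collection to the base case $c=2$.
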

\begin{proof}
Using induction on $c$, the result is true for $c=2$ (\cite[Theorem 2.6]{29A}). Assume that the result holds for $c\geq 2$. By hypothesis, we have
\[
[R,_{c+1} F]\equiv \Bigg{[}\Big{(}\gamma_{c+1}(F)\Big{)}^p,F\Bigg{]} \pmod{\gamma_{c+4}(F)}.
\]
It follows that
\[
[R,_{c+1} F]\equiv \Big{(}\gamma_{c+2}(F)\Big{)}^p \pmod{\gamma_{c+4}(F)}.
\]
\end{proof}
\begin{thm}\label{12}
With that above notations and assumptions we have
\[
\mathcal{M}^{(c)}(E_1)\cong \mathbb{Z}_{p}^{\big(\chi _{c+1}(2)+\chi_{c+2}(2)\big)},~~\text{for} ~~c\geq 2.
\]
\end{thm}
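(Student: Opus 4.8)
The plan is to feed the explicit description of $\mathcal{M}^{(c)}(E_1)$ recorded just before the statement into Theorem \ref{11} and into the structure theorem for free lower central factors, Theorem \ref{hh}. Recall that, since $E_1$ has nilpotency class $2$ and $c\geq 2$, we already know $\gamma_{c+1}(F)\subseteq\gamma_3(F)\subseteq R$, so that $\mathcal{M}^{(c)}(E_1)=\gamma_{c+1}(F)/[R,_{c}F]$; moreover $\gamma_{c+3}(F)=[\gamma_3(F),_{c}F]\subseteq[R,_{c}F]\subseteq\gamma_{c+1}(F)$, which legitimizes the isomorphism
\[
\mathcal{M}^{(c)}(E_1)\cong\frac{\gamma_{c+1}(F)/\gamma_{c+3}(F)}{[R,_{c}F]/\gamma_{c+3}(F)}
\]
already established above. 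The whole argument then reduces to identifying the denominator.

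First I would promote the congruence of Theorem \ref{11}, namely $[R,_{c}F]\equiv(\gamma_{c+1}(F))^{p}\pmod{\gamma_{c+3}(F)}$, to the genuine equality $[R,_{c}F]=(\gamma_{c+1}(F))^{p}\,\gamma_{c+3}(F)$; this is precisely where the containment $\gamma_{c+3}(F)\subseteq[R,_{c}F]$ is used. Substituting this in yields
\[
\mathcal{M}^{(c)}(E_1)\cong\frac{\gamma_{c+1}(F)/\gamma_{c+3}(F)}{(\gamma_{c+1}(F))^{p}\,\gamma_{c+3}(F)/\gamma_{c+3}(F)}.
\]

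Next I would set $A:=\gamma_{c+1}(F)/\gamma_{c+3}(F)$. By Theorem \ref{hh}, $A$ is free abelian with basis the basic commutators of weights $c+1$ and $c+2$ on $\{x,y\}$, hence of rank $\chi_{c+1}(2)+\chi_{c+2}(2)$. The subgroup $(\gamma_{c+1}(F))^{p}\gamma_{c+3}(F)/\gamma_{c+3}(F)$ is the image of $(\gamma_{c+1}(F))^{p}$ under the projection $\gamma_{c+1}(F)\twoheadrightarrow A$, and since $A$ is abelian this image is exactly the subgroup of $p$-th powers of $A$, that is $pA$. Consequently $\mathcal{M}^{(c)}(E_1)\cong A/pA\cong\mathbb{Z}_{p}^{(\chi_{c+1}(2)+\chi_{c+2}(2))}$, which is the stated formula.

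Since the substantive work has already been carried out in Theorem \ref{11}, I do not expect a serious obstacle here. The only point that genuinely requires care is the passage from a congruence modulo $\gamma_{c+3}(F)$ to an equality of subgroups — this rests on $\gamma_{c+3}(F)=[\gamma_3(F),_{c}F]\subseteq[R,_{c}F]$ — together with the elementary but worth-stating observation that in the abelian group $A$ the subgroup generated by all $p$-th powers is $pA$, so that $A/pA$ is elementary abelian of the claimed rank.
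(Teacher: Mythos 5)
Your proposal is correct and follows essentially the same route as the paper: both invoke Theorem \ref{hh} to identify $\gamma_{c+1}(F)/\gamma_{c+3}(F)$ as free abelian of rank $\chi_{c+1}(2)+\chi_{c+2}(2)$ and Theorem \ref{11} to identify the denominator with its subgroup of $p$-th powers. Your version merely makes explicit two points the paper leaves implicit — that $\gamma_{c+3}(F)\subseteq[R,{}_cF]$ upgrades the congruence to an equality of subgroups, and that the image of $(\gamma_{c+1}(F))^p$ in the quotient is $pA$ — which is a welcome clarification but not a different argument.
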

\begin{proof}
By Theorem \ref{hh}, we know that $\dfrac{\gamma_{c+1}(F)}{\gamma_{c+3}(F)}$ is the free abelian group with the basis of all basic commutators of wight $c+1$ and $c+2$ on $\{ x,y\}$. By Theorem \ref{11}, we have
\[
\dfrac{[R,_{c}F]}{\gamma _{c+3}(F)}=\Bigg{(} \dfrac{\gamma_{c+1}(F)}{\gamma_{c+3}(F)}\Bigg{)}^p,\]
so the result holds.
\end{proof}
\begin{thm}\cite[Theorem C]{111}\label{13}
Let $G$ be a  group and $1\rightarrow R\rightarrow F \rightarrow G \rightarrow 1$ be a free presentation of $G$. Put $\gamma_{c+1}^{*}(G)=\dfrac{\gamma_{c+1}(F)}{\gamma_{c+1}(R,F)}$, then
\begin{itemize}
\item[\textnormal{(i)}] $0\longrightarrow \mathcal{M}^{(c)}(G) \longrightarrow \gamma_{c+1}^{*}(G) \longrightarrow \gamma_{c+1}(G)\longrightarrow 0$ is exact.
\item[\textnormal{(ii)}] For $n\geq 2$, we have
\begin{align*}
\gamma_{c+1} ^{*}(D_n)\cong
\left \lbrace
\begin{array}{ll}
\mathbb{Z}_n &n ~ is ~odd\\
\mathbb{Z}_n \times \mathbb{Z}^{(\chi _{c+1}(2)-1)}_{2} & n ~ is ~even
\end{array}\right.,
\end{align*}
where $ D_n $ is dihedral group of order $2n.$
\end{itemize}
\end{thm}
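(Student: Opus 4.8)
The plan for part (i) is purely formal. Since $R\trianglelefteq F$, the subgroup $\gamma_{c+1}(R,F)=[R,{}_{c}F]$ lies in $\gamma_{c+1}(F)\cap R$, so $\mathcal{M}^{(c)}(G)=(\gamma_{c+1}(F)\cap R)/\gamma_{c+1}(R,F)$ is a normal subgroup of $\gamma_{c+1}^{*}(G)=\gamma_{c+1}(F)/\gamma_{c+1}(R,F)$, and the quotient map $F\to F/R=G$ induces a homomorphism $\gamma_{c+1}^{*}(G)\to\gamma_{c+1}(G)$ whose kernel is exactly $\mathcal{M}^{(c)}(G)$, because
\[
\gamma_{c+1}^{*}(G)\big/\mathcal{M}^{(c)}(G)\;\cong\;\gamma_{c+1}(F)\big/\big(\gamma_{c+1}(F)\cap R\big)\;\cong\;\gamma_{c+1}(F)R/R\;=\;\gamma_{c+1}(F/R)\;=\;\gamma_{c+1}(G),
\]
using that $\gamma_{c+1}$ commutes with quotients. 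This is the asserted exact sequence, so (i) needs nothing further.

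For (ii) I would first compute $\gamma_{c+1}(D_{n})$. From $D_{n}=\langle a,b\mid a^{n},\,b^{2},\,b^{-1}ab=a^{-1}\rangle$ one has $[a,b]=a^{-2}$, and inductively $[a,\underbrace{b,\ldots,b}_{k}]=a^{(-2)^{k}}$; hence $\gamma_{k+1}(D_{n})=\langle a^{2^{k}}\rangle$, in particular $\gamma_{c+1}(D_{n})=\langle a^{2^{c}}\rangle$, which is all of $\langle a\rangle\cong\mathbb{Z}_{n}$ when $n$ is odd. (Recall $D_{n}$ is nilpotent only for $n$ a $2$-power, so for general $n$ the lower central series merely stabilises.) By (i) it then suffices to find $\mathcal{M}^{(c)}(D_{n})$ together with the extension class. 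If $n$ is odd, every Sylow subgroup of $D_{n}$ is cyclic — a Sylow $p$-subgroup ($p\mid n$) sits inside $\langle a\rangle$, and the Sylow $2$-subgroup is $\langle b\rangle\cong\mathbb{Z}_{2}$ — so each has trivial $c$-nilpotent multiplier by Proposition \ref{7} and Corollary \ref{8}; since the $p$-primary part of $\mathcal{M}^{(c)}$ is controlled by Sylow $p$-subgroups (the usual restriction/transfer argument for Baer invariants, exactly as for the Schur multiplier), we get $\mathcal{M}^{(c)}(D_{n})=0$, whence $\gamma_{c+1}^{*}(D_{n})\cong\gamma_{c+1}(D_{n})\cong\mathbb{Z}_{n}$.

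The even case is where the work lies. Write $n=2^{s}q$ with $q$ odd; the coprime splitting $\langle a\rangle=\langle a^{q}\rangle\times\langle a^{2^{s}}\rangle$ exhibits $D_{n}\cong\mathbb{Z}_{q}\rtimes D_{2^{s}}$, a direct product $\mathbb{Z}_{2}\times D_{q}$ when $s=1$. For $s=1$ one applies Proposition \ref{61} and Corollary \ref{8} directly: since $\mathcal{M}^{(c)}(\mathbb{Z}_{2})=\mathcal{M}^{(c)}(D_{q})=0$ and $D_{q}^{\mathrm{ab}}\cong\mathbb{Z}_{2}$, one gets $\mathcal{M}^{(c)}(D_{n})\cong\Gamma_{c+1}(\mathbb{Z}_{2},\mathbb{Z}_{2})\cong\mathbb{Z}_{2}^{(\chi_{c+1}(2))}$, and since $\gamma_{c+1}(D_{n})\cong\mathbb{Z}_{q}$ has odd order the extension in (i) splits, giving $\mathbb{Z}_{q}\times\mathbb{Z}_{2}^{(\chi_{c+1}(2))}\cong\mathbb{Z}_{n}\times\mathbb{Z}_{2}^{(\chi_{c+1}(2)-1)}$. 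For $s\ge 2$ (where $D_{2^{s}}$ is a dihedral $2$-group of class $s$) I would induct on $n$ through the central subgroup $Z(D_{n})=\langle a^{n/2}\rangle\cong\mathbb{Z}_{2}$ with quotient $D_{n/2}$, using Proposition \ref{5} and Theorem \ref{4}, the base case being handled by the basic-commutator calculus of Theorems \ref{11}--\ref{12}: take $F$ free on $\{a,b\}$ and $R=\langle a^{2^{s}},b^{2},[a,b]a^{2}\rangle^{F}$, use Theorem \ref{hh} to identify $\gamma_{c+1}(F)$ modulo a sufficiently high term $\gamma_{N}(F)$ with the free abelian group on the basic commutators of weights $c+1,\ldots,N-1$ on $\{a,b\}$, and show that $\gamma_{c+1}(R,F)$ meets this range exactly in the product of the appropriate power-subgroups of those layers — the relator $[a,b]a^{2}$ pushing the ``$a^{2}$-directions'' up one weight, while $b^{2}$ and $a^{2^{s}}$ contribute $2$nd and $2^{s}$th powers. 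Collecting everything yields $\gamma_{c+1}^{*}(D_{n})\cong\mathbb{Z}_{n}\times\mathbb{Z}_{2}^{(\chi_{c+1}(2)-1)}$.

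I expect the even case to be the main obstacle, for three reasons: a dihedral $2$-group $D_{2^{s}}$ has class $s$ rather than $2$, so the collection must be carried through several weight layers and the single-layer argument that sufficed for $E_{1}$ no longer closes the proof; one must determine precisely which of the $\chi_{c+1}(2)$ basic commutators of weight $c+1$ survive only as $2$-torsion and which combination assembles the cyclic $\mathbb{Z}_{n}$ summand; and the extension of (i) is in general non-split on its $2$-part — already for $D_{4}$ it turns $0\to\mathbb{Z}_{2}\to\gamma_{2}^{*}(D_{4})\to\mathbb{Z}_{2}\to0$ into a $\mathbb{Z}_{4}$ — so the final identification of the isomorphism type needs honest bookkeeping rather than a formal splitting. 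A cleaner, if more computational, alternative is to skip the reduction altogether and run the basic-commutator argument (Theorem \ref{hh}) directly on $D_{n}$, isolating the contributions of $a^{n}$, $b^{2}$ and $[a,b]a^{2}$ to $\gamma_{c+1}(R,F)$.
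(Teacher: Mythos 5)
First, a point of comparison: the paper does not prove this theorem at all --- it is imported verbatim as Theorem C of Ellis's paper \cite{111}, so there is no internal argument to measure your attempt against; any proof must stand on its own. Judged on those terms, your part (i) is correct and complete: the inclusion $\gamma_{c+1}(R,F)\subseteq\gamma_{c+1}(F)\cap R$ together with $\gamma_{c+1}(F)/(\gamma_{c+1}(F)\cap R)\cong\gamma_{c+1}(F)R/R=\gamma_{c+1}(F/R)$ gives the exact sequence formally. In part (ii) the odd case and the case $n\equiv 2\pmod 4$ are essentially right and consistent with Propositions \ref{61} and \ref{7}, although the claim that the $p$-primary part of $\mathcal{M}^{(c)}(G)$ embeds into $\mathcal{M}^{(c)}$ of a Sylow $p$-subgroup is a nontrivial fact about Baer invariants that you invoke without proof; it is true and standard in that literature (cf.\ \cite{6A}), but it is not available from anything established in this paper.

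The genuine gap is the case $4\mid n$, and neither of your proposed routes closes it. The inductive route through $Z(D_n)=\langle a^{n/2}\rangle$ with quotient $D_{n/2}$ rests on Theorem \ref{4} and Proposition \ref{5}, but these yield only divisibility bounds and an exact sequence whose first map has unidentified image; at best they control the order of $\mathcal{M}^{(c)}(D_n)$, never the isomorphism type of $\gamma_{c+1}^{*}(D_n)$ --- and, as you yourself observe, the isomorphism type is exactly where the subtlety lies, since the extension in (i) is non-split on the $2$-part (already $\gamma_{2}^{*}(D_4)\cong\mathbb{Z}_4$). The direct route --- computing $\gamma_{c+1}(R,F)$ modulo a high term $\gamma_N(F)$ for $R=\langle a^{n},b^{2},[a,b]a^{2}\rangle^{F}$ --- is precisely the content of Ellis's Theorem C, and you only describe what such a computation ought to show (``the appropriate power-subgroups of those layers'') without carrying it out. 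Unlike the $E_1$ computation of Theorems \ref{11}--\ref{12}, where $\gamma_{c+1}(F)\subseteq R$ and everything collapses into two weight layers, here $\gamma_{c+1}(F)\not\subseteq R$, the relator $[a,b]a^{2}$ mixes weights $1$ and $2$ across arbitrarily many layers, and one must identify which combination of basic commutators assembles the cyclic $\mathbb{Z}_n$ summand and verify that the remaining $\chi_{c+1}(2)-1$ directions are reduced to exponent $2$. None of that bookkeeping is present, so the even case remains unproved; honestly flagging the difficulty, as you do, does not remove it.
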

It is known that $\mathcal{M}^{(c)}(D_8)\cong \mathbb{Z}_4 \times \mathbb{Z}^{(\chi_{c+1}(2)-1)}_{2}$. Also we know that $E_2,$ the extra-special $p$-group of order $p^3$ and exponent $p^2 $ for odd $p,$ and $Q_8$ are not capable groups, so we can summarize the explicit structure of the $c$-nilpotent multipliers of all extra-special $p$-groups as follows, for $c\geq 2$.
\begin{thm}\label{14}
Let $G$ be an extra-special $p$-group of order $p^{2n+1}$ and $c\geq 2$.
\begin{itemize}
\item[\textnormal{(i)}] If $n>1$, then $\mathcal{M}^{(c)}(G)$ is an elementary abelian $p$-group of order $ p^{\chi_{c+1}(2n)}$.
\item[\textnormal{(ii)}] Suppose that $G$ of order $p^3$ and $p$ is odd. Then $\mathcal{M}^{(c)}(G)\cong \mathbb{Z}^{(\chi_{c+1}(2)+\chi_{c+2}(2))}_{p}$ if $G$ is of exponent $p$ and $\mathcal{M}^{(c)}(G)\cong\mathbb{Z}^{(\chi_{c+1}(2))}_{p}$ if $G$ is of exponent $p^2$.
\item[\textnormal{(iii)}] The quaternion group of order 8, $ Q_8 $, has $\mathbb{Z}^{(\chi_{c+1}(2))}_{2}  $ as $c$-nilpotent multiplier and the $c$-nilpotent multiplier of the dihedral group of order 8, $ D_8$,  is\ \  $\mathbb{Z}_4 \times \mathbb{Z}^{(\chi_{c+1}(2)-1)}_{2}$.
\end{itemize}
\end{thm}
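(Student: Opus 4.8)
The plan is to establish the three assertions by assembling the results proved above, organising the argument according to the order of $G$ and whether $G$ is capable. The crucial input from \cite{3} is that the only capable extra-special $p$-groups are $D_8$ and the group $E_1$ of order $p^3$ and exponent $p$ (for odd $p$); every other extra-special $p$-group is non-capable, and hence subject to Theorem~\ref{10} and the corollary following it.

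\textbf{Part (i).} When $n>1$, Theorem~\ref{9}(i) says that $G$ is unicentral, so $Z^{*}(G)=Z(G)=G'\neq 1$ and $G$ is not capable. The corollary following Theorem~\ref{10} then applies directly and gives $\mathcal{M}^{(c)}(G)\cong\mathbb{Z}_p^{(\chi_{c+1}(2n))}$, an elementary abelian $p$-group of order $p^{\chi_{c+1}(2n)}$. (Equivalently, one may combine Theorem~\ref{10} with $G/G'\cong\mathbb{Z}_p^{(2n)}$ and Corollary~\ref{8}, using $\chi_{c+1}(1)=0$.)

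\textbf{Parts (ii) and (iii).} For order $p^3$ with $p$ odd there are exactly two isomorphism types. If $G$ has exponent $p$ then $G\cong E_1$, and the identity $\mathcal{M}^{(c)}(G)\cong\mathbb{Z}_p^{(\chi_{c+1}(2)+\chi_{c+2}(2))}$ is precisely Theorem~\ref{12}. If $G$ has exponent $p^2$ then $G\cong E_2$, which is non-capable, so the corollary after Theorem~\ref{10} with $n=1$ gives $\mathcal{M}^{(c)}(G)\cong\mathbb{Z}_p^{(\chi_{c+1}(2))}$. For $p=2$: the group $Q_8$ is non-capable, so that same corollary (with $n=1$) yields $\mathcal{M}^{(c)}(Q_8)\cong\mathbb{Z}_2^{(\chi_{c+1}(2))}$; and $D_8$ has nilpotency class $2$, so $\gamma_{c+1}(D_8)=1$ for all $c\geq 2$, whence the exact sequence of Theorem~\ref{13}(i) collapses to $\mathcal{M}^{(c)}(D_8)\cong\gamma_{c+1}^{*}(D_8)$, and Theorem~\ref{13}(ii), applied to the dihedral group of order $8$ (the even case $n=4$), identifies the right-hand side as $\mathbb{Z}_4\times\mathbb{Z}_2^{(\chi_{c+1}(2)-1)}$.

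Since the statement is a synthesis of earlier results, there is no single delicate computation to worry about: the genuine work has already been carried out in Theorems~\ref{10}, \ref{11}, \ref{12} and in the preceding corollary. The only point demanding care is the bookkeeping around capability — checking that every extra-special $p$-group other than $E_1$ and $D_8$ is non-capable, so that Theorem~\ref{10} may be invoked — together with the separate treatment of the three small exceptional groups $E_2$, $Q_8$ and $D_8$, whose $c$-nilpotent multipliers are not simply that of the elementary abelian quotient $G^{ab}$ (owing either to exponent-$p^2$ effects or, for $D_8$, to the extra $\mathbb{Z}_4$ factor).
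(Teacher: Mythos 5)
Your proposal is correct and follows essentially the same route as the paper: the paper offers no separate proof of this theorem, presenting it as a summary of the preceding results, and the assembly you describe (Theorem \ref{9}(i) plus unicentrality to rule out capability for $n>1$, Theorem \ref{12} for $E_1$, the corollary to Theorem \ref{10} for the non-capable groups $E_2$ and $Q_8$, and Theorem \ref{13} for $D_8$) is precisely the implicit argument. Your explicit justification of the $D_8$ case via the collapse of the exact sequence in Theorem \ref{13}(i) makes transparent a step the paper only asserts as ``known.''
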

In the proof of some of next theorems, we need to work with the $c$-nilpotent multiplier of finite abelian $p$-groups. For abelian $p$-groups we have.
\begin{thm}\label{15}
Let $G\cong \mathbb{Z}_{p^{m_1}} \oplus\ldots \oplus\mathbb{Z}_{p^{m_k}}$, where $m_1\geq m_2\geq \ldots \geq m_k$ and $ |G|=p^n $. Then
\begin{itemize}
\item[\textnormal{(i)}]  $\big{|} \mathcal{M}^{(c)}(G)\big{|} = p^{\chi_{c+1}(n)}$ if and only if $m_i=1$ for all $i$.
\item[\textnormal{(ii)}] $\big{|} \mathcal{M}^{(c)}(G)\big{|} \leq p^{\chi_{c+1}(n-1)}$ if and only if $m_1\geq 2$.
\end{itemize}
\end{thm}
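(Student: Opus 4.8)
The plan is to reduce both equivalences to the explicit formula of Corollary \ref{8} together with one structural fact: the strict monotonicity of the "successive differences'' $e_i:=\chi_{c+1}(i)-\chi_{c+1}(i-1)$.

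Write $w=c+1$ and put $e_i=\chi_{w}(i)-\chi_{w}(i-1)$ for $i\geq 1$ with $\chi_w(0)=0$; since $w=c+1\geq 3$ there is no basic commutator of weight $w$ on one letter, so $e_1=0$. By Corollary \ref{8},
\[
\bigl|\mathcal{M}^{(c)}(G)\bigr|=p^{N},\qquad N=\sum_{i=2}^{k}m_i e_i=\sum_{i=1}^{k}m_i e_i ,
\]
while $\chi_w(n)=\sum_{j=1}^{n}e_j$ and $\chi_w(n-1)=\sum_{j=1}^{n-1}e_j$ by telescoping. I first claim
\[
0=e_1<e_2<e_3<\cdots .
\]
Recall that $\chi_w(d)$ is the number of basic commutators of weight $w$ on $\{x_1,\dots,x_d\}$, so $e_d$ counts those that actually involve the top letter $x_d$. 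The order isomorphism $\{x_1<\cdots<x_{d-1}\}\to\{x_1<\cdots<x_{d-2}<x_d\}$ induces a weight- and (lexicographic) order-preserving bijection of basic commutators, which identifies the basic commutators of weight $w$ on $\{x_1,\dots,x_{d-1}\}$ involving $x_{d-1}$ with those on $\{x_1,\dots,x_{d-2},x_d\}$ involving $x_d$; these form a proper subset of the set counted by $e_d$, since for $d\geq 2$ the left-normed basic commutator $[x_d,\underbrace{x_{d-1},\dots,x_{d-1}}_{c}]$ has weight $w$ and involves $x_{d-1}$. Hence $e_{d-1}<e_d$ for all $d\geq 3$, while $e_1=0<\chi_w(2)=e_2$.

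For \textbf{(i)}: form the multiset listing each $e_i$ with multiplicity $m_i$; it has $n=\sum m_i$ elements and total $N$. Arranged in increasing order it is $e_1$ repeated $m_1$ times, then $e_2$ repeated $m_2$ times, and so on, so its $j$-th entry is $e_{\iota(j)}$, where $\iota(j)$ is the least index with $m_1+\cdots+m_{\iota(j)}\geq j$; since each $m_\ell\geq 1$ we get $\iota(j)\leq j$, hence $e_{\iota(j)}\leq e_j$. Summing over $j$ yields $N\leq\sum_{j=1}^{n}e_j=\chi_w(n)$, and by strict monotonicity of $(e_j)$ equality forces $\iota(j)=j$ for every $j$, i.e. $m_1=\cdots=m_k=1$ (equivalently $k=n$). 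Conversely, if all $m_i=1$ then $N=\sum_{i=1}^{n}e_i=\chi_w(n)$.

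For \textbf{(ii)}: if $m_1\geq 2$, then $e_1=0$ occurs with multiplicity at least $2$ in the multiset above; deleting one copy leaves an $(n-1)$-element multiset of the same total $N$, whose $j$-th smallest entry is $e_{\iota'(j)}$ with $\iota'(j)$ least such that $(m_1-1)+m_2+\cdots+m_{\iota'(j)}\geq j$. As $m_1-1\geq 1$ and $m_\ell\geq 1$, again $\iota'(j)\leq j$, so $N\leq\sum_{j=1}^{n-1}e_j=\chi_w(n-1)$, i.e. $\bigl|\mathcal{M}^{(c)}(G)\bigr|\leq p^{\chi_{c+1}(n-1)}$. Conversely, if $m_1=1$ then all $m_i=1$, and by (i) $N=\chi_w(n)=\chi_w(n-1)+e_n>\chi_w(n-1)$ because $e_n>0$ for $n\geq 2$; hence $\bigl|\mathcal{M}^{(c)}(G)\bigr|>p^{\chi_{c+1}(n-1)}$. (The degenerate case $n=1$, where $G$ is cyclic and the multiplier is trivial, is excluded.) The only real work is the strict monotonicity $e_{d-1}<e_d$; granting that, both equivalences are routine bookkeeping with the multiset of exponents of $\mathcal{M}^{(c)}(G)$.
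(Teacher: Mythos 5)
Your proof is correct, and while it starts from the same place as the paper (the explicit formula of Corollary \ref{8}, reducing everything to maximizing $N=\sum_i m_i e_i$ with $e_i=\chi_{c+1}(i)-\chi_{c+1}(i-1)$ over partitions of $n$), the optimization step is carried out quite differently. The paper defines $S(k:m_1,\ldots,m_k)=\sum_{i=2}^k e_i m_i$ and argues by a local exchange: replacing a part $m_j>1$ by $m_j-1$ and appending a new part $1$ changes $S$ by $e_{k+1}-e_j$, so the maximum is at $(1,\ldots,1)$ and the ``next largest value'' is at $(2,1,\ldots,1)$. This correct difference is positive only because $e_j<e_{k+1}$, i.e.\ precisely because of the strict monotonicity $e_1<e_2<\cdots$ that you prove; the paper never establishes this (and in fact its displayed difference $(\chi_{c+1}(j-1)-\chi_{c+1}(j))+(\chi_{c+1}(k)-\chi_{c+1}(k+1))$ has a sign slip in the first term, which makes the expression look trivially negative and hides the dependence on monotonicity). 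Your combinatorial proof of $e_{d-1}<e_d$ via the relabelling bijection and the witness $[x_d,x_{d-1},\ldots,x_{d-1}]$ is exactly the missing ingredient, and your sorted-multiset comparison replaces the paper's somewhat hand-waved assertion about the second-largest value of $S$ with a direct term-by-term bound $N\leq\sum_{j\le n-1}e_j$ when $m_1\geq 2$. So your argument is a more complete and self-contained version of the same strategy. Two small remarks: $e_1=0$ already needs only $c+1\geq 2$, not $c+1\geq 3$; and you are right that the ``only if'' direction of (ii) degenerates when $n=1$ --- the paper's statement tacitly assumes $n\geq 2$, and flagging this explicitly, as you do, is appropriate.
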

\begin{proof}
Using Corollary $ \ref{8} $, we have $ |\mathcal{M}^{(c)}(G)|=p^{\sum_{i=2}^{k}(\chi_{c+1}(i)-\chi_{c+1}(i-1))m_i} $. Define
\[S(k:m_1,\ldots,m_k)=\sum_{i=2}^{k}(\chi_{c+1}(i)-\chi_{c+1}(i-1))m_i.\]
It is easy to see that $ S(n:1,\ldots,1) =\chi_{c+1}(n)$ and $ S(n:2,1,\ldots,1) =\chi_{c+1}(n-1)$. Assume that for some $ j $,
we have $ m_j>1 $ an straightforward computation shows that
\[S(k:m_1,\ldots,m_k)-S(k+1:m_1,\ldots,m_j-1,\ldots,m_k,1)=\]
\[(\chi_{c+1}(j-1)-\chi_{c+1}(j))+(\chi_{c+1}(k)-\chi_{c+1}(k+1))\]
which is negative. Hence the maximum  value of $S(k:m_1,\ldots,m_k)  $ is $ S(n:1,\ldots,1)  $ and the next largest value
of $S(k:m_1,\ldots,m_k)  $ is $ S(n-1:2,1,\ldots,1).$
\end{proof}
Here we recall the notion of central product of two groups.
\begin{defn}
The group $G$ is a central product of $A$ and $B$, if $G = AB$, where $A$ and $B$ are normal subgroups of $G$ and $A \subseteq C_G(B)$.
\end{defn}

We denote the central product of two groups $A$ and $B$ by $A\cdot B$.\\
Recall that a group $G$ is said to be minimal non-abelian if it is non-abelian but all its proper
subgroups are abelian. \\
%Here we recall the notion of central product of two groups.
%\begin{defn}
%The group $G$ is a central product of $A$ and $B$, if $G = AB$, where $A$ and $B$ are normal subgroups of $G$ and $A \subseteq C_G(B)$.
%\end{defn}
%We denote the central product of two groups $A$ and $B$ by $A\cdot B$.
The number minimal generators of group of $ G $ and the Frattini subgroup of $ G $ are denoted by $ d(G)$ and $\Phi(G),$ respectively.\\
%The number minimal generators of a group $ G $ is denoted by $ d(G).$
This lemma gives information on minimal non-abelian $p$-groups.

\begin{lem}\cite [ Lemma 2.2]{zh}\label{401}
Assume that $G$ is a finite nonabelian $p$-group. Then the
following conditions are equivalent:
\begin{itemize}
\item[$(1)$] $G$ is minimal non-abelian;
\item[$(2)$] $d(G) = 2$ and $|G'| = p$;
\item[$(3)$] $d(G) = 2$ and $\Phi(G) = Z(G)$.
\end{itemize}
\end{lem}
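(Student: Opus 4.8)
The plan is to prove the cyclic chain $(1)\Rightarrow(2)\Rightarrow(3)\Rightarrow(1)$. Throughout, $G$ is a fixed finite non-abelian $p$-group, and I will freely use the standard facts that every proper subgroup of $G$ lies in a maximal subgroup, that maximal subgroups have index $p$ and are normal, and that every maximal subgroup contains $\Phi(G)=G'G^{p}$; equivalently, $G$ is minimal non-abelian exactly when all of its maximal subgroups are abelian. Since $d(G)=\dim_{\mathbb{F}_p}G/\Phi(G)$, the condition $d(G)=2$ is the same as $G/\Phi(G)\cong\mathbb{Z}_p\oplus\mathbb{Z}_p$, so $G$ then has exactly $p+1\geq 2$ maximal subgroups.

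For $(1)\Rightarrow(2)$: choosing $a,b$ with $[a,b]\neq 1$, the subgroup $\langle a,b\rangle$ is non-abelian, so minimality gives $G=\langle a,b\rangle$, and as $G$ is not cyclic, $d(G)=2$. Next I would show $\Phi(G)\subseteq Z(G)$: an element $x\in\Phi(G)$ lies in two distinct maximal subgroups $M_1\neq M_2$, both abelian by minimality, and $\langle M_1,M_2\rangle=G$, so $x\in Z(G)$. Hence $G'\subseteq\Phi(G)\subseteq Z(G)$, so $G$ has class at most $2$; the quotient $G/Z(G)$ is a non-trivial quotient of $G/\Phi(G)\cong\mathbb{Z}_p\oplus\mathbb{Z}_p$ and cannot be cyclic (else $G$ is abelian), so $G/Z(G)\cong\mathbb{Z}_p\oplus\mathbb{Z}_p$ and $Z(G)=\Phi(G)$. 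In class two the commutator induces a bilinear map $G/Z(G)\times G/Z(G)\to G'$, so $G'=\langle[a,b]\rangle$; and $[a,b]^{p}=[a^{p},b]=1$ because $a^{p}\in G^{p}\subseteq\Phi(G)=Z(G)$. Thus $G'$ is cyclic of order $p$.

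For $(2)\Rightarrow(3)$: $|G'|=p$ makes the normal subgroup $G'$ central, so $G$ has class at most $2$, and then $[x^{p},y]=[x,y]^{p}=1$ for all $x,y$ (the commutator lies in a group of order $p$), giving $G^{p}\subseteq Z(G)$ and hence $\Phi(G)=G'G^{p}\subseteq Z(G)$. Conversely a hypothetical $z\in Z(G)\setminus\Phi(G)$ would be part of a generating pair of $G$, forcing $G$ abelian, so $Z(G)\subseteq\Phi(G)$ and $Z(G)=\Phi(G)$. For $(3)\Rightarrow(1)$: any proper $H$ lies in a maximal subgroup $M$, which contains $\Phi(G)=Z(G)$ with $|M/Z(G)|=|M/\Phi(G)|=p$; a group with cyclic central quotient is abelian, so $M$, and therefore $H$, is abelian. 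As $G$ is non-abelian by hypothesis, $G$ is minimal non-abelian.

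The one step needing genuine care is extracting $|G'|=p$ \emph{exactly} (not just ``$G'$ cyclic'') in $(1)\Rightarrow(2)$; this is what forces the detour through class two and the identity $[a,b]^{p}=[a^{p},b]$ together with $a^{p}\in Z(G)$. The remaining implications are routine Frattini-subgroup arithmetic for $p$-groups.
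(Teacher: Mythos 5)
The paper offers no proof of this lemma at all: it is imported verbatim from the cited reference (Xu--An--Zhang, Lemma 2.2), so there is no in-paper argument to measure yours against. Your blind proof is correct and self-contained. The cycle $(1)\Rightarrow(2)\Rightarrow(3)\Rightarrow(1)$ closes: in $(1)\Rightarrow(2)$ the key points all hold --- $G=\langle a,b\rangle$ by minimality, $\Phi(G)\subseteq Z(G)$ because every Frattini element sits in two distinct abelian maximal subgroups whose join is $G$, and the passage from ``$G'$ cyclic'' to $|G'|=p$ via $[a,b]^p=[a^p,b]=1$ with $a^p\in G^p\subseteq\Phi(G)=Z(G)$ is exactly the step that needs care and you supply it. (Note that along the way you actually establish $(1)\Rightarrow(3)$ as well, which costs nothing.) In $(2)\Rightarrow(3)$ the facts that a normal subgroup of order $p$ in a $p$-group is central and that a central element outside $\Phi(G)$ would complete a generating pair (Burnside basis theorem, using $d(G)=2$) are both correctly invoked; and $(3)\Rightarrow(1)$ is the standard ``cyclic central quotient implies abelian'' argument applied to each maximal subgroup $M\supseteq\Phi(G)=Z(G)$ with $|M/\Phi(G)|=p$. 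I see no gaps; this is a clean, elementary proof of a result the paper only cites.
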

For $p$-groups with small derived subgroup we have:
\begin{lem}\label{151}
\cite[Lemma 4.2]{ber}
Let $G$ be a $p$-group with $|G'|= p$. Then $G=(A_1\cdot A_2\cdot \ldots\cdot A_s)Z(G)$,
the central product, where $A_1,\ldots , A_s$ are minimal non-abelian groups. Furthermore $G/Z(G)$ is an elementary
abelian group of even rank.
\end{lem}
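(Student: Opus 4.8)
The plan is to reduce everything to linear algebra over $\mathbb{F}_p$ via the commutator form on $G/Z(G)$. First I would record that $G$ has nilpotency class at most $2$: the conjugation action of $G$ on the cyclic group $G'$ of order $p$ yields a homomorphism into $\mathrm{Aut}(G')$, a group of order $p-1$, which is forced to be trivial since $G$ is a $p$-group; hence $G'\subseteq Z(G)$, and in particular $G'$ is central. Using class $2$ one checks that the map
\[
f\colon \bar G\times\bar G\longrightarrow G',\qquad f\bigl(xZ(G),\,yZ(G)\bigr)=[x,y],
\]
where $\bar G=G/Z(G)$, is a well-defined $\mathbb{Z}$-bilinear map which is alternating because $[x,x]=1$. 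Moreover $[x^p,y]=[x,y]^p=1$ for all $x,y$ (the commutator lies in $G'$, which has exponent $p$), so $x^p\in Z(G)$; thus $\bar G$ is elementary abelian and, identifying $G'$ with $\mathbb{F}_p$, the map $f$ becomes an alternating bilinear form on the finite-dimensional $\mathbb{F}_p$-space $\bar G$.

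Next I would argue that $f$ is nondegenerate: if $\bar x$ lies in the radical then $[x,y]=1$ for every $y\in G$, i.e. $x\in Z(G)$ and $\bar x=0$. Since a nondegenerate alternating bilinear form on a finite-dimensional $\mathbb{F}_p$-space exists only in even dimension, this yields at once that $G/Z(G)$ is elementary abelian of even rank, say $2s$; fixing a generator $z$ of $G'$, I would then choose a symplectic basis $\bar e_1,\bar f_1,\dots,\bar e_s,\bar f_s$ of $\bar G$, so that $f(\bar e_i,\bar f_i)=z$ for each $i$ and $f$ vanishes on every other pair of basis vectors.

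Finally, I would lift each basis vector to $e_i,f_i\in G$ and put $A_i=\langle e_i,f_i\rangle$. Since the value of $f$ is literally the relevant commutator, $[e_i,f_i]=z$ while $[e_i,e_j]=[f_i,f_j]=1$ for all $i,j$ and $[e_i,f_j]=1$ whenever $i\neq j$. Hence each $A_i$ is non-abelian with $d(A_i)=2$ and $|A_i'|=|\langle z\rangle|=p$, so $A_i$ is minimal non-abelian by Lemma \ref{401}. For $i\neq j$ the generators of $A_i$ commute with those of $A_j$, so $A_i\subseteq C_G(A_j)$; and each $A_i$ is normal in $G$, because $[e_i,G]$ and $[f_i,G]$ lie in $G'=\langle z\rangle\subseteq A_i$, so conjugation by any $g\in G$ carries $e_i,f_i$ back into $A_i$. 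Reducing modulo $Z(G)$, the product $A_1A_2\cdots A_s$ maps onto $\langle\bar e_1,\bar f_1,\dots,\bar e_s,\bar f_s\rangle=\bar G$, whence $G=(A_1\cdot A_2\cdots A_s)\,Z(G)$, the iterated central product, as claimed.

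The only genuinely delicate point is the appeal to the classification of alternating forms: one needs the radical of $f$ to be \emph{exactly} trivial, which is precisely why the quotient is taken by the full centre rather than by $G'$, and one needs all the $f(\bar e_i,\bar f_i)$ to be the \emph{same} generator $z$ of $G'$ — this is arranged by rescaling the symplectic basis. After these observations the central-product decomposition and the even-rank statement fall out simultaneously, and everything else is routine commutator bookkeeping in a class-two group.
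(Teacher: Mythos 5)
Your argument is correct. Note that the paper gives no proof of this lemma at all --- it is imported verbatim as \cite[Lemma 4.2]{ber} and used as a black box --- so there is no internal proof to compare against; what you have written is the standard argument (essentially Berkovich's own): $|G'|=p$ forces $G'\leq Z(G)$, the commutator then induces a nondegenerate alternating $\mathbb{F}_p$-bilinear form on the elementary abelian quotient $G/Z(G)$ (nondegenerate precisely because you quotient by the full centre), a symplectic basis yields the even rank together with the lifted pairs $e_i,f_i$, and Lemma \ref{401} identifies each $A_i=\langle e_i,f_i\rangle$ as minimal non-abelian. The supporting details you supply --- exponent $p$ of $G/Z(G)$ via $[x^p,y]=[x,y]^p=1$, normality of each $A_i$ from $[A_i,G]\leq G'\leq A_i$, and pairwise centralizing from bilinearity of the commutator in a class-two group --- all check out, so the proposal stands as a complete proof of the cited result.
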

The order of the $c$-nilpotent multiplier of a finite $p$-group depends somehow on the order of its derived subgroup.
\begin{lem}\label{16}
Let $G$ be a non-abelian finite $p$-group of order $p^n$ whose derived subgroup is of order $p$ and $ c\geq 2 $. Then
$\Big{|} \mathcal{M}^{(c)}(G)\Big{|} \leq p^{\chi_{c+1}(n-1)+\chi_{c+2}(2)}$, and the equality holds if and only if $G\cong E_1\times \mathbb{Z}_{p}^{(n-3)}$.
\end{lem}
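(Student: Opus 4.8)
The plan is to compare $\mathcal{M}^{(c)}(G)$ with $\mathcal{M}^{(c)}(G^{ab})$ and to control the difference by a graded Lie‑ring argument.

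\textbf{Step 1 (reduction to the abelianisation).} Since $|G'|=p$ the subgroup $G'$ is central, $G$ has class $2$, and $\gamma_{c+1}(G)=1$ for every $c\geq2$. I would apply Proposition \ref{5} with $B=G'$: as $G'\cap\gamma_{c+1}(G)=1$, the natural map $\mathcal{M}^{(c)}(G)\to\mathcal{M}^{(c)}(G^{ab})$ is onto and its kernel $K$ is the image of $G'\wedge_{c}G$, so $|\mathcal{M}^{(c)}(G)|=|\mathcal{M}^{(c)}(G^{ab})|\,|K|$. By Theorem \ref{15}, $|\mathcal{M}^{(c)}(G^{ab})|\leq p^{\chi_{c+1}(n-1)}$, with equality exactly when $G^{ab}$ is elementary abelian. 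Hence it remains to prove $|K|\leq p^{\chi_{c+2}(2)}$ and to analyse when the two estimates are simultaneously sharp.

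\textbf{Step 2 (bounding $K$).} Fix a minimal free presentation $1\to R\to F\to G\to1$ with $F$ free of rank $d=d(G)$. Using $\gamma_{c+1}(F)\subseteq R$ and $\gamma_{c+1}(RF',F)=\gamma_{c+2}(F)\gamma_{c+1}(R,F)$ one identifies $K\cong\gamma_{c+2}(F)/\big(\gamma_{c+2}(F)\cap\gamma_{c+1}(R,F)\big)$. From $\gamma_3(F)\subseteq R$ and $(F')^{p}\subseteq R$ (the latter because $(G')^{p}=1$) one gets $(\gamma_{c+2}F)^{p}\gamma_{c+3}F\subseteq\gamma_{c+1}(R,F)$, so $K$ is elementary abelian and is a quotient of the degree‑$(c+2)$ part over $\mathbb{F}_p$ of the free Lie ring $L$ on $d$ letters, which has rank $\chi_{c+2}(d)$ by Theorem \ref{hh}. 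The crucial point is that $|G'|=p$ makes the image of $R\cap F'$ in $L_2\otimes\mathbb{F}_p$ a \emph{hyperplane} $V$, so $K$ is a quotient of $(L/I)_{c+2}$ with $I$ the Lie ideal generated by $V$. A short Jacobi argument then shows $L/I$ is, in all degrees $\geq2$, a homomorphic image of the free Lie ring on \emph{two} generators $a,b$ (any pair with $[\bar a,\bar b]\neq0$ in $G$): in $L/I$ every bracket is a scalar multiple of $z=[\bar a,\bar b]$, and the identity $\phi(a,b)[z,y]=\phi(a,y)[z,b]-\phi(b,y)[z,a]$ collapses all of $L/I$ onto iterated brackets of $a,b$. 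Therefore $\dim_{\mathbb{F}_p}(L/I)_{c+2}\leq\chi_{c+2}(2)$ (Theorem \ref{hh} on two letters), giving $|K|\leq p^{\chi_{c+2}(2)}$ and the stated bound.

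\textbf{Step 3 (equality).} Equality forces $G^{ab}$ elementary abelian (so $d=n-1$) and $|K|=p^{\chi_{c+2}(2)}$. Sharpness of the $K$‑estimate should be shown to force (i) that the commutator form $G^{ab}\times G^{ab}\to G'$ has rank exactly $2$ — if the rank were $\geq4$, a second hyperbolic pair $c,d$ gives $[a,b]=[c,d]=z$ in $L/I$, whence $[z,a]=[[c,d],a]=[[c,a],d]+[c,[d,a]]=0$ and $(L/I)_{\geq3}=0$, i.e.\ $K=0$ (this is what happens for extra‑special groups of order $\geq p^{5}$, cf.\ Theorem \ref{14}(i)) — and (ii) that the power relations $x_i^{p^{m_i}}w_i$ contribute nothing modulo $(\gamma_{c+2}F)^{p}\gamma_{c+3}F$, which together with $G^{ab}$ elementary abelian forces $\exp G=p$ (using $\binom p2\equiv0\pmod p$ for odd $p$; for $p=2$ no non‑abelian $2$‑group of exponent $2$ exists, and strictness there follows from Theorem \ref{14}(iii) and the induction). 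A rank‑$2$ form gives $G/Z(G)\cong\mathbb{Z}_p^{(2)}$, and then $\exp G=p$ with $|G'|=p$ yields $G=E_1\times\mathbb{Z}_p^{(n-3)}$ (this can also be read off from Lemma \ref{151}). Conversely, Proposition \ref{61} together with Theorem \ref{12} and Corollary \ref{8} give $\mathcal{M}^{(c)}(E_1\times\mathbb{Z}_p^{(n-3)})\cong\mathcal{M}^{(c)}(\mathbb{Z}_p^{(n-1)})\oplus\mathbb{Z}_p^{(\chi_{c+2}(2))}$, of order $p^{\chi_{c+1}(n-1)+\chi_{c+2}(2)}$, so the bound is attained exactly there.

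\textbf{Main obstacle.} The technical heart is Step 2 — converting $|G'|=p$ into the hyperplane statement in the associated graded Lie ring and making the ``homomorphic image of the free Lie ring on two generators'' claim precise — together with the bookkeeping in Step 3 showing that the bound is strictly lost as soon as the commutator form has rank $\geq4$ or some generator has a non‑trivial $p$‑th power in $G'$. Tracking exactly when the degree‑$(c+2)$ collapse is an isomorphism is where the argument must be handled with care.
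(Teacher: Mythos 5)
Your route is genuinely different from the paper's. The paper never works inside a free presentation for this lemma: it invokes Berkovich's central--product decomposition (Lemma \ref{151}), kills the cases where $Z(G)$ or $G/G'$ fails to be elementary abelian by the divisibility bound of Theorem \ref{4} together with Ellis's product formula (Proposition \ref{61}) and Theorem \ref{15}(ii), and in the remaining case writes $G=H\times T$ with $H$ extra-special and reads the answer off from Theorems \ref{10}, \ref{12}, \ref{14}. Your Steps 1--2, by contrast, prove the inequality directly: the identification of the kernel $K$ of $\mathcal{M}^{(c)}(G)\to\mathcal{M}^{(c)}(G^{ab})$ with $\gamma_{c+2}(F)/(\gamma_{c+2}(F)\cap\gamma_{c+1}(R,F))$ is correct (since $\gamma_{c+1}(RF',F)=\gamma_{c+1}(R,F)\gamma_{c+2}(F)$), the containment $(\gamma_{c+2}F)^p\gamma_{c+3}F\subseteq\gamma_{c+1}(R,F)$ holds, and the Jacobi/induction argument showing that $L/I$ is spanned in degrees $\geq 2$ by brackets in a single hyperbolic pair does give $|K|\leq p^{\chi_{c+2}(2)}$. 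This is a legitimate alternative proof of the bound, and your rank-$\geq4$ collapse neatly explains \emph{why} larger extra-special groups contribute nothing, which the paper only records as a computed fact.

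The gap is in Step 3, in the direction ``equality $\Rightarrow G\cong E_1\times\mathbb{Z}_p^{(n-3)}$.'' After forcing $G^{ab}$ elementary abelian and the commutator form of rank $2$, you still must exclude $E_2\times T$, $Q_8\times T$, $D_8\times T$ and $(E_{p^3}\cdot\mathbb{Z}_{p^2})\times T$, all of which satisfy both of those conditions. Your item (ii) asserts that a power relation $x^p=w\neq1$ in $G'$ destroys sharpness, but the mechanism you sketch does not yet do this: commutating $f^pw^{-1}\in R$ with $y_1,\dots,y_c$ only yields $[w,y_1,\dots,y_c]\equiv[f,y_1,\dots,y_c]^p \pmod{\gamma_{c+1}(R,F)}$, i.e.\ it identifies the spanning set of $K$ with $p$-th powers of weight-$(c+1)$ elements; that alone does not make $K$ small, and one needs the further bookkeeping (the relations $f_i^p\in R$ for the remaining generators plus $\binom{p}{2}\equiv0\pmod p$ for odd $p$) to show those $p$-th powers actually die in $\mathcal{M}^{(c)}(G)$, with a separate treatment of $p=2$ and of a central element of order $p^2$. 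You flag this yourself (``should be shown''), so as written the equality case is incomplete. The cleanest repair is to stop the Lie-ring argument once rank $2$ is established, conclude from Lemma \ref{151} that $G\cong H\times T$ or $(H\cdot\mathbb{Z}_{p^2})\times T$ with $|H|=p^3$ extra-special, and then quote Theorem \ref{14}, Proposition \ref{61} and Theorem \ref{15} exactly as the paper does.
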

\begin{proof}
By Lemma \ref{151}, $G=(A_1\cdot A_2\cdot\ldots\cdot A_s)Z(G)$ where $A_i$'s are minimal non-abelian groups. First suppose that $Z(G)$ is not elementary abelian. Then by Theorem \ref{4}, we have
\[
\Big{|} \mathcal{M}^{(c)}(G)\Big{|}\Big{|}\gamma_{c+1}(G)\cap Z(G)\Big{|}\leq  \Big{|}\mathcal{M}^{(c)}(G/ Z(G))\Big{|} ~
\Big{|} \mathcal{M}^{(c)}(Z(G))\Big{|}~\Big{|}  Z(G)\otimes_c G/ Z(G)\Big{|}.
\]
Using Proposition \ref{61} and Theorem \ref{15}$ (ii)$, since $  Z(G)\otimes_c G/ Z(G)  $ is the direct summand of $\Gamma_{c+1}(Z(G), G/ Z(G)),$ we have \[\Big{|}  Z(G)\otimes_c G/ Z(G)\Big{|}\leq \Big{|}\Gamma_{c+1}(Z(G), G/ Z(G)) \Big{|}.\] Therefore
\[\Big{|} \mathcal{M}^{(c)}(G)\Big{|}\leq \Big{|} \mathcal{M}^{(c)}(Z(G)\times G/ Z(G))\Big{|}\leq p^{\chi_{c+1}(n-1)}\leq p^{\chi_{c+1}(n-1)+\chi_{c+2}(2)}.\]
Now assume that $Z(G)$ is elementary abelian.
 If $|Z(G)|=p,$ then $G$ is extra-special due to Theorem \ref{14}.  We may assume that $|Z(G)| \geq p^2.$ If $ G/G' $  is not elementary abelian, then by a similar way in the previous case and using Theorem \ref{4} and  Proposition \ref{61}, we have
\[
\Big{|} \mathcal{M}^{(c)}(G)\Big{|}\Big{|}\gamma_{c+1}(G)\cap G'\Big{|}\leq  \Big{|}\mathcal{M}^{(c)}(G/ G')\Big{|} ~
\Big{|} \mathcal{M}^{(c)}(G')\Big{|}~\Big{|}  G'\otimes_c G/ G'\Big{|}.
\]
\[\leq \Big{|}\mathcal{M}^{(c)}(G'\times G/G')\Big{|}.\]
Thus Theorem \ref{15}$ (ii),$ implies $\Big{|} \mathcal{M}^{(c)}(G)\Big{|}\leq  p^{\chi_{c+1}(n-1)}\leq p^{\chi_{c+1}(n-1)+\chi_{c+2}(2)}.$
 \newline
 Now assume that $G/G'$ is elementary abelian. It is shown that \cite[Lemma 2.2]{25}, $G$ is a central product of an extra-special $p$-group $H$ of order $p^{2k+1}$ and $Z(G)$ of order $p^{n-2k}$. %Again we may assume that $|Z(G)| \geq p^2$ due to Theorem \ref{14}.
%  We may assume that $|Z(G)| \geq p^2$ due to Theorem \ref{14}.
% If $Z(G)$ is not elementary abelian, then by Theorem \ref{4}, we have
%\[
%\Big{|} \mathcal{M}^{(c)}(G)\Big{|}\leq  \Big{|}\mathcal{M}^{(c)}(G/ Z(G))\Big{|} ~
%\Big{|} \mathcal{M}^{(c)}(Z(G))\Big{|}~\Big{|}  Z(G)\otimes_c G/ Z(G)\Big{|}.
%\]
%By Proposition \ref{61} and Theorem \ref{15}$ (ii)$, therefore
%\[\Big{|} \mathcal{M}^{(c)}(G)\Big{|}\leq \Big{|} \mathcal{M}^{(c)}(Z(G)\times G/ Z(G))\Big{|}\leq p^{\chi_{c+1}(n-1)}\leq p^{\chi_{c+1}(n-1)+\chi_{c+2}(2)}.\]
Still in the case that $Z(G)$ is elementary abelian, suppose that $T$ be a complement of $G'$ in $Z(G)$, we have $Z(G)=G'\times T$ and so $G=H\times T$.
 By Proposition \ref{61}, we have
\[
\Big{|} \mathcal{M}^{(c)}(G)\Big{|}=  \Big{|}\mathcal{M}^{(c)}(H\times T)\Big{|}=\Big{|} \mathcal{M}^{(c)}(H)\Big{|} ~
\Big{|} \mathcal{M}^{(c)}(T)\Big{|}~\Big{|} \Gamma_{c+1}(H^{ab}, T)\Big{|}.\]
If $ H $ is not capable, then Theorems \ref{10} and \ref{14} imply
\[\Big{|} \mathcal{M}^{(c)}(H/H')\Big{|}=\Big{|} \mathcal{M}^{(c)}(H)\Big{|}=
p^{\chi_{c+1}(2k)}
.\]
By Theorem \ref{15}$ (ii) $, we have
%$\Big{|} M^{(c)}(Z(G))\Big{|}\leq p^{\chi_{c+1}(n-2k-1)}$. By Proposition \ref{61}, we have
%\[
%\Big{|}\Gamma_{c+1}(H^{ab}, Z(G))\Big{|}=p^{\chi_{c+1}(n-1)-\chi_{c+1}(2k)-\chi_{c+1}(n-2k-1)},\\
%\]
%and by Proposition \ref{5}, we have
\[\Big{|} \mathcal{M}^{(c)}(G)\Big{|}=\Big{|} \mathcal{M}^{(c)}(T\times H^{ab})\Big{|}= p^{\chi_{c+1}(n-1)}.\]
%Thus
%\begin{align*}
%\Big{|}\mathcal{M}^{(c)}(G)\Big{|}\leq p \Big{|} \mathcal{M}^{(c)}(Z(G)\times H^{ab})\Big{|}\leq p^{\chi_{c+1}(n-1)+1}
%<p^{\chi_{c+1}(n-1)+\chi_{c+2}(2)}.
%\end{align*}
Now assume that $ H=E_1 $ or $ H=D_8$.
By Proposition \ref{61} and Theorem \ref{15} $ (ii) $, we have
\[
\Big{|} \mathcal{M}^{(c)}(T)\Big{|}=p^{\chi_{c+1}(n-3)},~~ \Bigg{|}\Gamma_{c+1}(H^{ab},T)\Bigg{|}
=p^{\chi_{c+1}(n-1)-\chi_{c+1}(n-3)-\chi_{c+1}(2)}.
\]
Thus
\begin{align*}
\Big{|} \mathcal{M}^{(c)}(G)\Big{|}=\left\{
\begin{array}{ll}
2^{\chi_{c+1}(n-1)+1}&\! \! \! \! \! \! \! \! \! \! \! \! \! \! \! \! \! \! \! \! \! \! \! \! \qquad \qquad if~p=2,\\
p^{\chi_{c+1}(n-1)+\chi_{c+2}(2)}&~~
 ~~\quad\qquad\! \quad\! \! \! \! \! \! \! \! \! \! \! \! \! \! \! \! \! \! \! \! \! \! \!  if~p>2.\\
\end{array}\right .
\end{align*}
Now it is easy to see that $\Big{|} \mathcal{M}^{(c)}(G)\Big{|}=p^{\chi_{c+1}(n-1)+\chi_{c+2}(2)}$ if and only if  $G\cong E_1 \times \mathbb{Z}_{p}^{(n-3)}$.
\end{proof}
Now we are in a position to summarize the results in the following theorem.
\begin{thm}\label{17}
Let $ G $ be a $p$-group of order $ p^n $ with $ |G'|=p^m (m \geq 1)$ and $ c\geq 2.$ Then
\[\Big{|}\mathcal{M}^{(c)} (G)\Big{|}\leq p^{\chi_{c+1}(n-m)+\chi_{c+2}(2)+(m-1)(n-m)^c}.\]
In particular, if $ |G'|=p $, then we have $ \Big{|}\mathcal{M}^{(c)}(G)\Big{|}\leq p^{\chi_{c+1}(n-1)+\chi_{c+2}(2) }$ and the equality holds in last one if  and only if $ G\cong E_1\times  \mathbb{Z}_{p}^{(n-3)}$.
\end{thm}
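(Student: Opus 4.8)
The plan is to induct on $m$, where $|G'|=p^{m}$. The base case $m=1$ is exactly Lemma \ref{16}: it gives $\big|\mathcal{M}^{(c)}(G)\big|\le p^{\chi_{c+1}(n-1)+\chi_{c+2}(2)}$, which is the asserted bound with $m=1$ (the term $(m-1)(n-m)^c$ being $0$), and it also supplies both the ``in particular'' statement and the classification of the equality case. Thus the only thing left is the inductive step for $m\ge 2$, and the equality analysis never has to be revisited.

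For the inductive step, assume $m\ge 2$ and that the bound holds for all $p$-groups whose derived subgroup has order $p^{m-1}$. Since $G$ is a finite $p$-group and $G'\ne 1$, the intersection $Z(G)\cap G'$ is non-trivial, so choose $B\le Z(G)\cap G'$ with $|B|=p$ and put $A=G/B$. Then $|A|=p^{n-1}$; because $B\subseteq G'$ we have $A'=G'/B$, so $|A'|=p^{m-1}$, and moreover $A^{ab}=(G/B)^{ab}=G/G'$, an abelian $p$-group of order $p^{n-m}$, hence generated by at most $n-m$ elements. Now apply Theorem \ref{4} to the central subgroup $B$; since $B\cong\mathbb{Z}_{p}$ is cyclic, Corollary \ref{8} gives $\mathcal{M}^{(c)}(B)=0$, whence
\[
\big|\mathcal{M}^{(c)}(G)\big|\;\le\;\big|\mathcal{M}^{(c)}(A)\big|\cdot\big|B\otimes_{c}(G/B)^{ab}\big|.
\]
Next, because $B$ is central the iterated product $B\otimes_{c}(G/B)^{ab}=B\otimes (G/B)^{ab}\otimes\cdots\otimes(G/B)^{ab}$ behaves as an ordinary iterated tensor product of abelian groups; it is generated by the $(n-m)^{c}$ simple tensors $b\otimes m_{i_1}\otimes\cdots\otimes m_{i_c}$, where $b$ generates $B$ and the $m_{i_j}$ run over a generating set of $(G/B)^{ab}$ of size $\le n-m$, and each such element has order dividing $p$ since $b$ does. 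Hence $\big|B\otimes_{c}(G/B)^{ab}\big|\le p^{(n-m)^{c}}$. Applying the induction hypothesis to $A$ (note $(n-1)-(m-1)=n-m$) gives $\big|\mathcal{M}^{(c)}(A)\big|\le p^{\chi_{c+1}(n-m)+\chi_{c+2}(2)+(m-2)(n-m)^{c}}$, and multiplying the two estimates yields precisely $\big|\mathcal{M}^{(c)}(G)\big|\le p^{\chi_{c+1}(n-m)+\chi_{c+2}(2)+(m-1)(n-m)^{c}}$, finishing the induction.

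The step requiring the most care is the estimate $\big|B\otimes_{c}(G/B)^{ab}\big|\le p^{(n-m)^{c}}$: one must verify that, $B$ being central, the non-abelian tensor product appearing in Theorem \ref{4} really is generated by those $(n-m)^{c}$ simple tensors and that $b^{p}=1$ forces each of them to have exponent $p$, so that the naive generator-and-exponent count applies. The remaining ingredients --- existence of $B\le Z(G)\cap G'$, the identifications $A'=G'/B$ and $A^{ab}=G/G'$, and $\mathcal{M}^{(c)}(\mathbb{Z}_p)=0$ --- are routine. Note finally that for $m\ge 2$ this bound is expected to be far from sharp; the equality statement concerns only the case $|G'|=p$, already covered by Lemma \ref{16}.
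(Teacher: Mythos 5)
Your proposal is correct and follows essentially the same route as the paper: induction on $m$ with Lemma \ref{16} as the base case, and for $m\ge 2$ a central subgroup $B$ of order $p$ inside $G'$ fed into Theorem \ref{4}, with $\mathcal{M}^{(c)}(B)$ trivial and $\big|B\otimes_c (G/B)^{ab}\big|\le p^{(n-m)^c}$. You are merely more explicit than the paper about the identification $(G/B)^{ab}=G^{ab}$ and the generator-and-exponent count for the iterated tensor product.
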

\begin{proof}
Let $ G $ be an arbitrary non-abelian $ p$-group of order $ p^n.$ We proceed by induction on $ m $. The case $ m=1 $
follows from Lemma \ref{16}. \newline Therefore, we may assume that $ m\geq 2 $. Let $ B $ a central subgroup of order $ p $ in $ G' $, by Theorem \ref{4}, we have
\[\Big{|}\mathcal{M}^{(c)}(G)\Big{|}\leq \Big{|}\mathcal{M}^{(c)}(G)\Big{|}\Big{|}\gamma_{c+1}(G)\cap B\Big{|}\leq \Big{|}\mathcal{M}^{(c)}(G/B)\Big{|}~\Big{|} B \otimes_c G^{ab}\Big{|}.\]
 Using induction hypothesis
\[ \big{|} \mathcal{M}^{(c)}(G/B) \big{|} \leq  p^{\chi_{c+1}(n-m)+\chi_{c+2}(2)+(m-2)(n-m)^c },\]
and so
\[ \big{|} \mathcal{M}^{(c)}(G) \big{|} \leq  p^{\chi_{c+1}(n-m)+\chi_{c+2}(2)+(m-2)(n-m)^c+ (n-m)^c}\]
\[=p^{\chi_{c+1}(n-m)+\chi_{c+2}(2)+(m-1)(n-m)^c },\]
which completes the proof.
\end{proof}
\section{$c$-Capability of extra-special $p$-groups}
\paragraph{}
The first study of the capability of extra-special $p$-groups was by Beyl et al. in \cite{3}. They showed that in extra-special $p$-groups, only $E_1$ and $D_8$ are capable. Recently the last two authors, proved that for extra-special $p$-groups, the notions ``capable'' and ``2-capable'' are equivalent. Here we generalize it and show that for these groups, ``capability'' and ``$c$-capability'' are equivalent. Recall that if $G$ is a $c$-capable group, that is $G\cong E/Z_c(E)$ for some group $E$; then $G\cong \frac{E}{Z_{c-1}(E)}/Z(\frac{E}{Z_{c-1}(E)})$ which shows $G$ is capable too. So we just have to prove the $c$-capability of $D_8$ and $E_1$. To do this we need the following proposition which can be found in \cite[ Lemma 2.1(iv) and Proposition 1.2]{6}. For the statement of this fact we need terminology from \cite{6} as below.

Let $F/R$ be a free presentation for $G$ and $\pi :F/[R,\ _cF]\longrightarrow G$ be the canonical surjection. The $c$-central subgroup $Z_{c}^{*}(G)$ of $G$ is the image in $G$ of the section term of the upper central series of $F/[R,\ _cF]$. More precisely it is equal to $\pi (Z_c(F/[R,\ _cF]))$.
\begin{prop}\label{171}
\cite[Proposition 12]{111}
\begin{itemize}
\item[$(i)$] A group $G$ is $c$-capable if and only if $Z^{*}_{c}(G)$ is trivial;
\item[$(ii)$]If $N$ is normal subgroup of $G$ contained in $Z^{*}_{c}(G)$, then the canonical
\[
\mathcal{M}^{(c)}(G)\longrightarrow \mathcal{M}^{(c)}(G/N)
\]
homomorphisms is injection.
\item[$(iii)$] $ Z_{c+1}^*(G) $ contains $ Z_{c}^*(G) $ for $c \geq 0.$
\end{itemize}
\end{prop}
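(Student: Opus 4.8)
The plan is to fix a free presentation $1\to R\to F\to G\to 1$ and work throughout with the group $G^{*}:=F/[R,\ _cF]$, the canonical surjection $\pi\colon G^{*}\to G$, and $R^{*}:=R/[R,\ _cF]=\ker\pi$, so that by definition $Z_{c}^{*}(G)=\pi(Z_{c}(G^{*}))$. First I would record two structural facts. Since $[R^{*},\ _cG^{*}]=\gamma_{c+1}(R,F)/[R,\ _cF]=1$, we get $R^{*}\subseteq Z_{c}(G^{*})$, so $\pi$ is itself a $c$-central extension of $G$; conversely, given any $c$-central extension $\nu\colon E\to G$ (that is, $\ker\nu\subseteq Z_{c}(E)$), a lift $\tilde\varphi\colon F\to E$ of $F\twoheadrightarrow G$ satisfies $\tilde\varphi(R)\subseteq\ker\nu\subseteq Z_{c}(E)$, hence $\tilde\varphi([R,\ _cF])\subseteq[Z_{c}(E),\ _cE]=1$, so $\tilde\varphi$ factors as $\nu\psi=\pi$ for some $\psi\colon G^{*}\to E$. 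Also $\mathcal{M}^{(c)}(G)=(\gamma_{c+1}(F)\cap R)/[R,\ _cF]$ sits inside $\gamma_{c+1}(F)/[R,\ _cF]$. With these in hand, part (i)($\Leftarrow$) is immediate: if $Z_{c}^{*}(G)=1$ then $Z_{c}(G^{*})\subseteq R^{*}$, which together with $R^{*}\subseteq Z_{c}(G^{*})$ gives $Z_{c}(G^{*})=R^{*}$, whence $G\cong G^{*}/Z_{c}(G^{*})$ and $G$ is $c$-capable.

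For part (i)($\Rightarrow$), write $G\cong E/Z_{c}(E)$ with quotient map $\nu\colon E\to G$, so $\ker\nu=Z_{c}(E)$; the universal property above produces $\psi\colon G^{*}\to E$ with $\nu\psi=\pi$, and since $\nu(\psi(G^{*}))=\pi(G^{*})=G=\nu(E)$ we get $\psi(G^{*})\,Z_{c}(E)=E$. The crux is the lemma: \emph{if $H_{0}\le H$ and $H_{0}Z_{c}(H)=H$, then $Z_{c}(H_{0})\subseteq Z_{c}(H)$}. Granting it, $Z_{c}(\psi(G^{*}))\subseteq Z_{c}(E)$, and since $\psi$ maps $G^{*}$ onto $\psi(G^{*})$ (a surjection carries $Z_{c}$ into $Z_{c}$) we obtain $\psi(Z_{c}(G^{*}))\subseteq Z_{c}(\psi(G^{*}))\subseteq Z_{c}(E)=\ker\nu$; applying $\nu$ yields $Z_{c}^{*}(G)=\nu\psi(Z_{c}(G^{*}))=1$. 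I would prove the lemma by induction on $c$: for $c=1$ it is the classical identity $[h_{0},h_{0}'z]=[h_{0},z]\,[h_{0},h_{0}']^{z}=1$ with $h_{0}\in Z(H_{0})$, $h_{0}'\in H_{0}$, $z\in Z(H)$; for the inductive step, pass to $\bar H:=H/Z_{1}(H)$, check $\bar H_{0}\,Z_{c-1}(\bar H)=\bar H$, and use that for a central quotient $q\colon H\twoheadrightarrow\bar H$ one has $q^{-1}(Z_{k}(\bar H))=Z_{k+1}(H)$, which converts the inductive conclusion $Z_{c-1}(\bar H_{0})\subseteq Z_{c-1}(\bar H)$ back into $Z_{c}(H_{0})\subseteq Z_{c}(H)$.

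For part (ii), write $N=S/R$ with $R\le S\le F$, so that $G/N\cong F/S$. Pulling the hypothesis $N\subseteq Z_{c}^{*}(G)$ back through $\pi$ gives $S/[R,\ _cF]\subseteq L:=Z_{c}(G^{*})R^{*}$. I would then establish $[L,\ _cG^{*}]=1$ by repeatedly applying $[AB,C]=[A,C][B,C]$ for normal subgroups $A,B$, together with the telescoping inclusion $[\,Z_{c-k}(G^{*})\,[R^{*},\ _kG^{*}],\,G^{*}\,]\subseteq Z_{c-k-1}(G^{*})\,[R^{*},\ _{k+1}G^{*}]$, which after $c$ steps lands inside $Z_{0}(G^{*})\,[R^{*},\ _cG^{*}]=1$. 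Consequently $[S,\ _cF]=[R,\ _cF]$, so $\mathcal{M}^{(c)}(G)=(\gamma_{c+1}(F)\cap R)/[R,\ _cF]$ and $\mathcal{M}^{(c)}(G/N)=(\gamma_{c+1}(F)\cap S)/[R,\ _cF]$ are both subgroups of $\gamma_{c+1}(F)/[R,\ _cF]$, and the canonical map is the inclusion, hence a monomorphism.

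For part (iii), the inclusion $[R,\ _{c+1}F]\subseteq[R,\ _cF]$ gives a surjection $q\colon F/[R,\ _{c+1}F]\twoheadrightarrow G^{*}$, and a canonical $\pi_{c+1}\colon F/[R,\ _{c+1}F]\to G$ with $\pi q=\pi_{c+1}$. Given $g\in Z_{c}^{*}(G)$, write $g=\pi(y)$ with $y\in Z_{c}(G^{*})$, lift $y$ to $\tilde y$ with $q(\tilde y)=y$, and observe that $q\big([\tilde y,\ _c(F/[R,\ _{c+1}F])]\big)=[y,\ _cG^{*}]=1$, so $[\tilde y,\ _c(F/[R,\ _{c+1}F])]\subseteq\ker q=[R,\ _cF]/[R,\ _{c+1}F]$; one further commutator gives $[\tilde y,\ _{c+1}(F/[R,\ _{c+1}F])]\subseteq[R,\ _{c+1}F]/[R,\ _{c+1}F]=1$, so $\tilde y\in Z_{c+1}(F/[R,\ _{c+1}F])$ and $g=\pi_{c+1}(\tilde y)\in Z_{c+1}^{*}(G)$. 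The step I expect to be the main obstacle is the lemma used in part (i)($\Rightarrow$): it needs care, since the naive claim that $\psi(A)Z_{c}(B)=B$ forces $\psi(Z_{c}(A))\subseteq Z_{c}(B)$ is false for surjective $\psi$, and one must genuinely route through $Z_{c}(\psi(G^{*}))$ and handle the upper central series of subgroups and of central quotients separately.
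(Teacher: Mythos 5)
The paper does not actually prove this proposition (it is imported verbatim from Ellis \cite{111}), so your argument has to stand on its own; measured against the standard proof, parts (ii) and (iii) and the direction ($\Leftarrow$) of (i) are correct, and your overall architecture for (i)($\Rightarrow$) — route through $Z_c(\psi(G^*))$ and reduce to the lemma ``$H_0\le H$ and $H_0Z_c(H)=H$ imply $Z_c(H_0)\subseteq Z_c(H)$'' — is the right one. The gap is exactly where you predicted it: the inductive step of that lemma does not close as written. With $q\colon H\twoheadrightarrow\bar H=H/Z(H)$, the identity $q^{-1}(Z_{c-1}(\bar H))=Z_c(H)$ only handles the right-hand side of the desired inclusion; to exploit the inductive conclusion $Z_{c-1}(\bar H_0)\subseteq Z_{c-1}(\bar H)$ you also need $q(Z_c(H_0))\subseteq Z_{c-1}(\bar H_0)$, i.e. $[Z_c(H_0),{}_{c-1}H_0]\subseteq\ker(q|_{H_0})=H_0\cap Z(H)$. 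What comes for free is only $[Z_c(H_0),{}_{c-1}H_0]\subseteq Z(H_0)$, and $Z(H_0)\subseteq Z(H)$ is generally false under your hypothesis (take $H$ of class $2$, $c=2$, $H_0$ abelian non-central: then $H_0Z_2(H)=H$ but $Z(H_0)=H_0\not\subseteq Z(H)$); the hypothesis only yields $Z(H_0)\subseteq Z_c(H)$. Since $q|_{H_0}$ is a quotient of $H_0$ by a central subgroup that is usually strictly smaller than $Z(H_0)$, it carries $Z_c(H_0)$ only into $Z_c(\bar H_0)$, not $Z_{c-1}(\bar H_0)$, and the induction stalls.

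The gap is repairable, in two ways. Staying inside your induction, first prove $[Z_c(H_0),{}_{c-1}H_0]\subseteq Z(H)$: this subgroup lies in $\gamma_c(H_0)\cap Z(H_0)$, and for $u$ there and $h=ab$ with $a\in H_0$, $b\in Z_c(H)$ one computes $[u,h]=[u,b][u,a]^b=[u,b]\in[\gamma_c(H),Z_c(H)]=1$, using the three-subgroups-lemma consequence $[\gamma_i(H),Z_i(H)]=1$. Cleaner still, drop the induction: $Z_c(H)$ is the marginal subgroup of the variety $\mathfrak{N}_c$, so an element $b\in Z_c(H)$ may be deleted from any argument of a left-normed commutator of weight $c+1$ without changing its value; hence for $w\in Z_c(H_0)$, $a_i\in H_0$, $b_i\in Z_c(H)$ one gets $[w,a_1b_1,\dots,a_cb_c]=[w,a_1,\dots,a_c]=1$, which is the lemma. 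With either repair the rest of your write-up goes through.
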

Proposition \ref{171}  yields the sequences
\[1=Z_{0}^*(G)\subseteq Z_{1}^*(G)\subseteq Z_{2}^*(G)\subseteq Z_{3}^*(G) \subseteq \ldots,\]
we  recall that $ Z_{1}^*(G) $ the epicentre of $G.$\newline
Beyl's results can be extended to $c$-capability as follows.
\begin{thm}\label{18}
An extra-special $p$-group is $c$-capable if and only if $G$ is isomorphic to one of the groups $D_8$ or $E_1$.
\end{thm}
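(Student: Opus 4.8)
The plan is to establish the two directions of Theorem~\ref{18} separately, using Proposition~\ref{171} as the main tool. For the ``only if'' direction, note that Beyl et al.\ \cite{3} already showed that among extra-special $p$-groups only $D_8$ and $E_1$ are capable, and by the remark preceding Proposition~\ref{171} a $c$-capable group is automatically capable (since $Z_c^*(G)\supseteq Z_1^*(G)$ by Proposition~\ref{171}(iii)). Hence if $G$ is a $c$-capable extra-special $p$-group, it is capable, so $G\cong D_8$ or $G\cong E_1$. This direction is essentially immediate from what has been set up.

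For the ``if'' direction we must show $D_8$ and $E_1$ are actually $c$-capable, i.e.\ $Z_c^*(G)=1$ for $G\in\{D_8,E_1\}$. First I would record that $Z_c^*(G)\subseteq Z(G)=G'$, which has order $p$, so the only possibilities are $Z_c^*(G)=1$ or $Z_c^*(G)=G'$. By Proposition~\ref{171}(ii), if $Z_c^*(G)=G'$ then the canonical map $\mathcal{M}^{(c)}(G)\longrightarrow \mathcal{M}^{(c)}(G/G')$ is injective. So it suffices to show this map is \emph{not} injective for $G=E_1$ and $G=D_8$; equivalently, by comparing orders, that $|\mathcal{M}^{(c)}(G)|>|\mathcal{M}^{(c)}(G/G')|=|\mathcal{M}^{(c)}(\mathbb{Z}_p^{(2)})|=p^{\chi_{c+1}(2)}$. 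For $E_1$ this is immediate from Theorem~\ref{12}: $|\mathcal{M}^{(c)}(E_1)|=p^{\chi_{c+1}(2)+\chi_{c+2}(2)}$, and $\chi_{c+2}(2)>0$ for all $c\geq 2$, so the map cannot be a monomorphism, forcing $Z_c^*(E_1)=1$. For $D_8$, recall $\mathcal{M}^{(c)}(D_8)\cong\mathbb{Z}_4\times\mathbb{Z}_2^{(\chi_{c+1}(2)-1)}$ while $\mathcal{M}^{(c)}(D_8/D_8')=\mathcal{M}^{(c)}(\mathbb{Z}_2^{(2)})\cong\mathbb{Z}_2^{(\chi_{c+1}(2))}$; these have the same order but different exponents, so again no injection $\mathbb{Z}_4\times\mathbb{Z}_2^{(\chi_{c+1}(2)-1)}\hookrightarrow\mathbb{Z}_2^{(\chi_{c+1}(2))}$ exists, giving $Z_c^*(D_8)=1$.

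The main obstacle, such as it is, lies in making the $D_8$ argument watertight: one must know not merely $|\mathcal{M}^{(c)}(D_8)|$ but enough of its group structure (that it contains an element of order $4$) to rule out an embedding into an elementary abelian group, and one should confirm that the relevant quotient map is indeed the \emph{canonical} one appearing in Proposition~\ref{171}(ii). An alternative, perhaps cleaner, route for $D_8$ is to invoke Theorem~\ref{13}(ii), which computes $\gamma_{c+1}^*(D_n)$ directly, and to trace through the definition of $Z_c^*(D_8)$ via $Z_c(F/[R,{}_cF])$; but the order/exponent comparison above already suffices. I would also remark that combining both directions with Theorem~\ref{10} and the corollary following it shows the dichotomy is sharp: every non-capable extra-special $p$-group has $Z_c^*(G)=G'$ for all $c\geq 1$, which is consistent with $\mathcal{M}^{(c)}(G)\cong\mathcal{M}^{(c)}(G/G')$.
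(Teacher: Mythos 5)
Your proposal is correct in substance and, for $E_1$ and for the ``only if'' direction, follows essentially the paper's route: the paper also reduces to capability via the identity $E/Z_c(E)\cong \big(E/Z_{c-1}(E)\big)/Z\big(E/Z_{c-1}(E)\big)$ and then, for $E_1$, compares $|\mathcal{M}^{(c)}(E_1)|=p^{\chi_{c+1}(2)+\chi_{c+2}(2)}$ with $|\mathcal{M}^{(c)}(E_1/N)|\le p^{\chi_{c+1}(2)}$ to defeat injectivity in Proposition~\ref{171}(ii). Where you genuinely diverge is $D_8$: the paper simply exhibits the witness $D_{2^{c+1}}/Z_c(D_{2^{c+1}})\cong D_8$, which settles $c$-capability by definition and needs no information about $\mathcal{M}^{(c)}(D_8)$, whereas you run the same non-injectivity argument using the structure $\mathcal{M}^{(c)}(D_8)\cong \mathbb{Z}_4\times\mathbb{Z}_2^{(\chi_{c+1}(2)-1)}$ (available from Theorem~\ref{13}(ii) since $\gamma_{c+1}(D_8)=1$ for $c\ge 2$). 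Your route is uniform across both groups; the paper's is shorter for $D_8$.

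Two small repairs are needed. First, you assert $Z_c^*(G)\subseteq Z(G)$, but the only containment available is $Z_c^*(G)\subseteq Z_c(G)$, and $Z_c(G)=G$ for $c\ge 2$ since these groups have class $2$; so ``$Z_c^*(G)=1$ or $G'$'' is not justified as stated. The fix is standard and is implicitly what the paper does: if $Z_c^*(G)\neq 1$ it is a nontrivial normal subgroup of a $p$-group, hence meets $Z(G)=G'$ (of order $p$) nontrivially, hence \emph{contains} $G'$; then Proposition~\ref{171}(ii) applied to $N=G'\subseteq Z_c^*(G)$ forces $\mathcal{M}^{(c)}(G)\to\mathcal{M}^{(c)}(G/G')$ to be injective, which is what you go on to contradict. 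Second, your remark that $\mathcal{M}^{(c)}(D_8)$ and $\mathcal{M}^{(c)}(\mathbb{Z}_2^{(2)})$ ``have the same order'' is false: $\big|\mathbb{Z}_4\times\mathbb{Z}_2^{(\chi_{c+1}(2)-1)}\big|=2^{\chi_{c+1}(2)+1}>2^{\chi_{c+1}(2)}=\big|\mathbb{Z}_2^{(\chi_{c+1}(2))}\big|$. This error is harmless --- indeed it makes your life easier, since non-injectivity then follows from a pure order count and you do not need the exponent argument at all --- but it should be corrected.
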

\begin{proof}
%First we show $E_1$ is $c$-capable. We use Proposition 3.1 (ii) and show that for arbitrary normal subgroup $N$ of $E_1$, $M^{(c)}(E_1)\longrightarrow M^{(c)}(E_1/N)$ is not injective. For each normal subgroup $N$ of $E_1$, $E_1/N$ is an abelian $p$-group of order at most $p^2$ so $|M^{(c)}(E_1/N)|\leq p^\chi_{c+1}(2)$, but $|M^{(c)}(E_1)|=p^{\chi_{c+1}(2)+\chi_{c+2}(2)}$. For $D_8$ the proof is completely similar.
%
%The above technique can be used for determining the $c$-capability of generalized extra special $p$-groups.
%
%
Let $G$ be an extra-special $p$-group of order $p^3$ and exponent $p$. We will show there is no nontrivial normal subgroup of $G$ for which the natural homomorphism $\mathcal{M}^{(c)}(G)\longrightarrow \mathcal{M}^{(c)}(G/N)$ is injective. Let $N$ be a nontrivial normal subgroup of $G$, so $G/N$ is an abelian $p$-group of order at most $p^2$ and hence, Theorem \ref{15} shows $\big{|} \mathcal{M}^{(c)}(G/N)\big{|} \leq p^{\chi_{c+1}(2)}$. Since $\big{|} \mathcal{M}^{(c)}(G)\big{|}=p^{\chi_{c+1}(2)+\chi_{c+2}(2)}$ by Theorem \ref{17},
$\mathcal{M}^{(c)}(G)\longrightarrow \mathcal{M}^{(c)}(G/N)$ fails to be injective and the result holds. For the case $G=D_8,$ we have $D_{2^{c+1}}/Z_c(D_{2^{c+1}})\cong D_8.$ The proof is complete.
\end{proof}

\section{$c$-Capability and  $c$-nilpotent multiplier of generalized extra-special $p$-groups}
Jafari et al. in \cite{151}  studied the Schur multiplier  of generalized extra-special $p$-groups. We  obtain the $c$-nilpotent multiplier of a generalized extra-special $p$-groups.
Niroomand and Parvizi in \cite{28} first studied the capability of generalized extra-special $p$-groups. They showed that in generalized extra-special $p$-groups, only  $G=E_1\times \mathbb{Z}_p^{(n-3)}$  and $ G=D_8\times \mathbb{Z}_2^{n-3}$ are capable.  Here we generalize the results and show that for these groups, ``capability'' and ``$c$-capability'' are equivalent. Because non-capable groups are not $c$-capable, we just have to prove the $c$-capability of  $G=E_1\times \mathbb{Z}_p^{(n-3)}$  and $ G=D_8\times \mathbb{Z}_2^{(n-3)}.$

\begin{defn}\label{19}
\cite[Definition 3.1]{29}
Let $ G $ be a finite $p$-group. Then $ G $ is called generalized extra-special $p$-group if $ \Phi(G)=G'\cong \mathbb{Z}_{p} $.
\end{defn}
The structure of generalized extra-special $p$-groups is as follows. Here by $E_{p^{2m+1}}$ we mean the extra-special $p$-group of order $p^{2m+1}$.
\begin{lem}\label{20}
\cite[Lemma 3.2]{29}
Let $ G $ be a generalized extra-special $p$-group. Then if  $ Z(G)\cong \Phi(G) \times A$  then $ G\cong E_{p^{2m+1}} \times A$, and  if  $ Z(G)\cong \mathbb{Z}_{p^2} \times A$  then $ G\cong (E_{p^{2m+1}}\cdot\mathbb{Z}_{p^2} )\times A$, in which $A$ is an elementary abelian $p$-group.
\end{lem}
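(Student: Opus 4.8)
The plan is to first pin down the isomorphism type of $Z(G)$, then manufacture an extra-special subgroup $H\leq G$ from a symplectic basis of the commutator form, and finally split off a complementary central elementary abelian direct factor, taking $A$ to be a suitable complement inside $Z(G)$. For the reductions: since $G'\cong\mathbb{Z}_p$ and $|\mathrm{Aut}(\mathbb{Z}_p)|=p-1$ is prime to $p$, the $p$-group $G$ centralises $G'$, so $G'\leq Z(G)$ and $G$ has nilpotency class $2$; moreover $\Phi(G)=G'\leq Z(G)$, so $G/Z(G)$ is elementary abelian. Since $\Phi(Z(G))=Z(G)^p\leq G^p\leq\Phi(G)=G'$ has order at most $p$, there are exactly two possibilities: either $Z(G)$ is elementary abelian, and then the line $G'$ has a complement, giving $Z(G)\cong\Phi(G)\times A$ with $A$ elementary abelian; or $Z(G)^p=G'$, which forces $\exp Z(G)=p^2$ with exactly one invariant factor of order $p^2$, i.e. $Z(G)\cong\mathbb{Z}_{p^2}\times A$ with $A$ elementary abelian. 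This is precisely the case division in the statement.

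Next I would use the commutator form. On the $\mathbb{F}_p$-space $V_0:=G/G'$, class $2$ together with $(G')^p=1$ makes $\beta(xG',yG')=[x,y]\in G'\cong\mathbb{F}_p$ a well-defined alternating $\mathbb{F}_p$-bilinear form whose radical is exactly $Z(G)/G'$; hence $\beta$ descends to a nondegenerate alternating form on $G/Z(G)$, so $\dim_{\mathbb{F}_p}G/Z(G)=2m$ is even, and $m\geq 1$ because $G$ is non-abelian. Now choose an $\mathbb{F}_p$-subspace $W\leq V_0$ complementary to the radical $Z(G)/G'$; then $\beta|_W$ is nondegenerate, so it has a symplectic basis, which I lift to elements $g_1,g_1',\dots,g_m,g_m'\in G$ with $[g_1,g_1']$ a generator of $G'$, and set $H=\langle g_1,g_1',\dots,g_m,g_m'\rangle$. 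Since the $g_i,g_i'$ project onto a basis of $W\cong G/Z(G)$, we have $HZ(G)=G$ and $G'\leq H$, so $H/G'\cong W$ and $|H|=p^{2m+1}$; and if $h\in H\cap Z(G)$ its image in $V_0$ lies in $W\cap(Z(G)/G')=0$, whence $H\cap Z(G)=G'$. Therefore $Z(H)=H\cap Z(G)=G'$ and $\Phi(H)=H'=G'$, so $H$ is extra-special of order $p^{2m+1}$, and $[H,G]=[H,HZ(G)]=H'\leq H$ shows $H\unlhd G$.

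Finally I would assemble the decomposition. In the first case, with $A$ the chosen complement of $\Phi(G)$ in $Z(G)$, we have $Z(G)=G'A\leq HA$ and $HZ(G)=G$, whence $G=HA$, while $H\cap A\leq(H\cap Z(G))\cap A=G'\cap A=1$; since $A$ is central and $H$ is normal, $G=H\times A\cong E_{p^{2m+1}}\times A$. In the second case, fix a cyclic direct factor $\langle z_0\rangle\cong\mathbb{Z}_{p^2}$ of $Z(G)$ (so $G'=Z(G)^p=\langle z_0^p\rangle\leq\langle z_0\rangle$) and a complement $A$; then $H\cap\langle z_0\rangle=G'$, the order-$p$ subgroup of $\langle z_0\rangle$, so $H\langle z_0\rangle$ is the central product $E_{p^{2m+1}}\cdot\mathbb{Z}_{p^2}$. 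As before $G=H\langle z_0\rangle A$, while any $hz_0^k\in H\langle z_0\rangle\cap A$ is central, forcing $h\in H\cap Z(G)=G'$ and hence $hz_0^k\in\langle z_0\rangle\cap A=1$; since $A$ is central and $H\langle z_0\rangle\unlhd G$, we obtain $G=(H\langle z_0\rangle)\times A\cong(E_{p^{2m+1}}\cdot\mathbb{Z}_{p^2})\times A$.

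The delicate point --- and essentially the whole content of the argument --- is forcing $H\cap Z(G)=G'$ exactly, which is why the generators of $H$ must lift a basis of a vector-space complement of the radical of $\beta$ rather than an arbitrary symplectic basis of $G/Z(G)$; everything else is bookkeeping. The precise isomorphism type of $E_{p^{2m+1}}$ (exponent $p$ versus $p^2$ when $p$ is odd, and the $D_8$- versus $Q_8$-type central products when $p=2$) can be read off from $\exp H$, but the statement only asserts existence, so this needs no attention here.
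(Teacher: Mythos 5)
The paper does not prove this lemma at all: it is imported verbatim from Stancu \cite{29}, so there is no internal proof to compare yours against, and the real question is whether your blind reconstruction is sound. It is. The reduction to the two cases for $Z(G)$ via $\Phi(Z(G))=Z(G)^p\le G^p\le\Phi(G)=G'$ is correct and shows these are the only possibilities; the commutator form $\beta$ on $G/G'$ is well defined precisely because $G$ has class $2$ (forced by $|G'|=p$) and $G'$ has exponent $p$; and the decisive step --- lifting a symplectic basis of a complement $W$ of the radical $Z(G)/G'$ inside $G/G'$, rather than an arbitrary symplectic basis of $G/Z(G)$, so that $H\cap Z(G)=G'$ exactly --- is indeed the whole point, and you identify it as such. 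The remaining verifications ($\Phi(H)=H'H^p=G'$ using $H^p\le G^p\le\Phi(G)$; $Z(H)=H\cap Z(G)$ using $G=HZ(G)$; in the second case $G'=Z(G)^p=\langle z_0^p\rangle$ automatically sits inside any cyclic direct factor of order $p^2$, so $H\langle z_0\rangle$ really is the central product amalgamated over $Z(H)$) are all correct as written. Your argument is essentially the standard central-product decomposition for $p$-groups with derived subgroup of order $p$ (compare Lemma \ref{151}, quoted from Berkovich, which the paper uses for the general $|G'|=p$ case), specialized to the generalized extra-special situation where $\Phi(G)=G'$ forces $G/Z(G)$ elementary abelian and $\exp Z(G)\le p^2$; so what your proof buys is a self-contained derivation of a fact the paper only cites. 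No gaps.
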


%\begin{cor}
%et $ G $ be a generalized extra-special $p$-group. Then if  $ Z(G)\cong \Phi(G) \times A$  then $ G\cong E_{p^{2m+1}} \times \mathbb{Z}_{p^{n-2m-1}}$, and  if  $ Z(G)\cong \mathbb{Z}_{p^2} \times A$  then $ G\cong (E_{p^{2m+1}}*\mathbb{Z}_{p^2} )\times \mathbb{Z}_{p^{n-2m-2}} $, in which $A$ is an elementary abelian $p$-group.
%\end{cor}
The capability of the generalized extra-special $p$-groups is determined in the following.
\begin{prop}\cite[Theorem 3.4]{28}\label{21}
Let $ G $ be a generalized extra-special $p$-group. $ G $ is capable if and only if $ G=E_1\times \mathbb{Z}_p^{(n-3)}$  or $ G=D_8\times \mathbb{Z}_2^{(n-3)}$.
\end{prop}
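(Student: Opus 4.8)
The plan is to combine the structure theorem for generalized extra-special $p$-groups (Lemma~\ref{20}) with the capability criterion that $G$ is capable if and only if the exterior centre $Z^{\wedge}(G)=Z^{*}(G)$ is trivial, and with the detection tool of Lemma~\ref{41}: a nontrivial central subgroup $N\leq Z(G)$ lies in $Z^{\wedge}(G)$ precisely when the inflation $\mathcal{M}(G)\to\mathcal{M}(G/N)$ is a monomorphism. (All of this concerns the Schur multiplier, i.e. the case $c=1$.) By Lemma~\ref{20} and Definition~\ref{19} it suffices to settle capability for the two families $G\cong E_{p^{2m+1}}\times A$ and $G\cong(E_{p^{2m+1}}\cdot\mathbb{Z}_{p^2})\times A$ with $A$ elementary abelian; and since $Z^{\wedge}(G)$ is characteristic and every nontrivial central subgroup contains one of order $p$, it is enough to test central subgroups $N$ with $|N|=p$.

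For the ``only if'' direction, first suppose $E:=E_{p^{2m+1}}$ is \emph{not} capable, i.e. $E\in\{E_2,Q_8\}$ or $m\geq2$ (Theorem~\ref{9} and \cite{3}); then $Z^{\wedge}(E)=E'$, so $\mathcal{M}(E)\to\mathcal{M}(E/E')$ is injective by Lemma~\ref{41}. Take $N=E'\times1\leq Z(G)$. Feeding the quotient map $G\to G/N\cong(E/E')\times A$ into the K\"unneth-type decomposition $\mathcal{M}(X\times Y)\cong\mathcal{M}(X)\oplus\mathcal{M}(Y)\oplus(X^{ab}\otimes Y^{ab})$ (Proposition~\ref{61} with $c=1$) and using its naturality, together with the identity $(E/E')^{ab}=E^{ab}$ (so the tensor-summand map becomes the identity), shows $\mathcal{M}(G)\to\mathcal{M}(G/N)$ is injective; hence $N\subseteq Z^{\wedge}(G)$ and $G$ is not capable. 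The same argument handles $G=(E_{p^{2m+1}}\cdot\mathbb{Z}_{p^2})\times A$ once one knows that $\mathcal{M}(D)\to\mathcal{M}(D^{ab})$ is injective for $D=E_{p^{2m+1}}\cdot\mathbb{Z}_{p^2}$: here $D$ is nilpotent of class $2$, $D'\cong\mathbb{Z}_p$ is the unique subgroup of order $p$ in the central $\mathbb{Z}_{p^2}$, and $D^{ab}\cong\mathbb{Z}_p^{(2m+1)}$; I would compute $\mathcal{M}(D)$ from the five-term homology sequence of $1\to\mathbb{Z}_p\to E_{p^{2m+1}}\times\mathbb{Z}_{p^2}\to D\to1$ (using $\mathcal{M}(\mathbb{Z}_{p^2})=0$ and the known $\mathcal{M}(E_{p^{2m+1}})$ from Theorem~\ref{9}) and verify directly that the restriction to the abelianisation is monic, concluding $D'\times1\subseteq Z^{\wedge}(G)$.

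For the ``if'' direction it remains to show $E_1\times\mathbb{Z}_p^{(n-3)}$ and $D_8\times\mathbb{Z}_2^{(n-3)}$ are capable, i.e. $Z^{\wedge}(G)=1$. By Lemma~\ref{41} I must show that for every central $N$ of order $p$ the map $\mathcal{M}(G)\to\mathcal{M}(G/N)$ is \emph{not} injective, and by the exact sequence of Proposition~\ref{5} (with $c=1$) this is the same as saying the leftmost homomorphism $N\wedge_{1}G\to\mathcal{M}(G)$ has nontrivial image. Using the explicit multipliers $\mathcal{M}(E_1)\cong\mathbb{Z}_p^{2}$, $\mathcal{M}(D_8)\cong\mathbb{Z}_2$ (Theorem~\ref{9}), $\mathcal{M}(\mathbb{Z}_p^{(k)})$ (Corollary~\ref{8}), and the K\"unneth decomposition to pin down $\mathcal{M}(G)$, I would track the image of this connecting map in three cases: $N=G'$, $N$ contained in an elementary abelian complement of $G'$ in $Z(G)$, and $N$ ``mixed''. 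A more hands-on alternative is to produce explicit stem extensions: $D_8\cong D_{16}/Z(D_{16})$, $E_1\cong E/Z(E)$ for a suitable group $E$ of order $p^5$, and then realise $E_1\times\mathbb{Z}_p^{(k)}$ and $D_8\times\mathbb{Z}_2^{(k)}$ as $\widetilde{E}/Z(\widetilde{E})$ via an appropriate central-product construction.

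The hard part will be exactly these two points: on the positive side, showing the connecting map $N\wedge_{1}G\to\mathcal{M}(G)$ is nonzero for \emph{every} order-$p$ central subgroup of $Z(G)=G'\times\mathbb{Z}_p^{(n-3)}$ --- the ``mixed'' subgroups, which are neither $G'$ nor inside a chosen complement, are the delicate ones; and on the negative side, the explicit determination of $\mathcal{M}(E_{p^{2m+1}}\cdot\mathbb{Z}_{p^2})$ and the verification that its restriction to the abelianisation is injective. Both come down to a direct analysis of the relevant five-term and Ganea sequences rather than to any formal shortcut.
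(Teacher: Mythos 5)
First, a point of reference: the paper does not prove Proposition \ref{21} at all --- it is imported verbatim from \cite[Theorem 3.4]{28} --- so there is no in-paper argument to measure you against. Judged on its own terms, your overall strategy (reduce via Lemma \ref{20} to the two families $E_{p^{2m+1}}\times A$ and $(E_{p^{2m+1}}\cdot\mathbb{Z}_{p^2})\times A$, then test order-$p$ central subgroups with Lemma \ref{41}) is the right one, and your naturality argument for the ``only if'' direction in the direct-product case is correct and essentially complete. But as written the proposal has two genuine gaps, both of which you flag yourself, and a proof that ends by announcing its own hard parts is not yet a proof. (1) The non-capability of $(E_{p^{2m+1}}\cdot\mathbb{Z}_{p^2})\times A$ rests entirely on the unverified claim that $\mathcal{M}(D)\to\mathcal{M}(D^{ab})$ is injective for $D=E_{p^{2m+1}}\cdot\mathbb{Z}_{p^2}$; the order count ($|\mathcal{M}(D)|=p^{2m^2+m-1}$ by Proposition \ref{22} against $|\mathcal{M}(D^{ab})|=p^{2m^2+m}$) is merely consistent with injectivity and does not imply it, so this step is still an open computation in your write-up. (2) The ``if'' direction is left with the ``mixed'' central subgroups unresolved, and the fallback of realizing $D_8\times\mathbb{Z}_2^{(k)}$ explicitly as a central quotient is not as routine as the phrase ``appropriate central-product construction'' suggests (a plain direct product of covers has too large a centre).

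Gap (2) closes cheaply by counting, in the same way the paper itself argues in Theorems \ref{18} and \ref{23}, and I would urge you to replace the case analysis by this. By Proposition \ref{61} one has $|\mathcal{M}(E_1\times\mathbb{Z}_p^{(n-3)})|=p^{\binom{n-1}{2}+1}$, whereas the $c=1$ case of the inequality of \cite{22,23} quoted in the introduction (Green's bound) gives $|\mathcal{M}(H)|\le p^{\binom{n-1}{2}}$ for \emph{every} group $H$ of order $p^{n-1}$. Hence $\mathcal{M}(G)\to\mathcal{M}(G/N)$ fails to be injective for every nontrivial $N$, mixed or not, and Lemma \ref{41} gives $Z^{\wedge}(G)=1$ at one stroke. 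For $G=D_8\times\mathbb{Z}_2^{(n-3)}$ the same computation gives equality, $|\mathcal{M}(G)|=2^{\binom{n-1}{2}}$, so counting alone does not suffice; there you need Berkovich's theorem \cite{2} that equality in Green's bound forces the quotient to be elementary abelian, which pins the only dangerous subgroup down to $N=G'=\Phi(G)$, and that last case dies because $\mathcal{M}(D_8)\to\mathcal{M}(D_8/D_8')$ is the zero map (compare orders in the exact sequence of Proposition \ref{5}: $\mathcal{M}(\mathbb{Z}_2^{(2)})\cong\mathbb{Z}_2$ must surject onto $D_8'\cong\mathbb{Z}_2$). Gap (1) has no such shortcut that I can see from the tools quoted in this paper; it genuinely requires the five-term/Ganea analysis you defer, so that computation must be carried out before the ``only if'' direction is established for the second family.
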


\begin{prop}\label{22}
\cite[Lemma 2.9]{151}
Let $G$ be the generalized extra-special $p$-group and isomorphic to $E_{p^{2m+1}}\cdot B$ such that $ B\cong \mathbb{Z}_{p^2}$. Then $ \big{|}\mathcal{M}(G)\big{|}= p^{2m^2+m-1} $.
\end{prop}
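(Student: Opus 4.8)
The plan is to determine $|\mathcal{M}(G)|=|\mathcal{M}^{(1)}(G)|$ by squeezing it between a lower bound coming from the exact sequence of Proposition~\ref{5} and an upper bound coming from the divisibility of Theorem~\ref{4}, both specialized to $c=1$; the two bounds will coincide. First I would record the structure of $G$. Writing $G\cong E_{p^{2m+1}}\cdot B$ with $B\cong\mathbb{Z}_{p^2}$, the amalgamated subgroup is $Z(E_{p^{2m+1}})=E_{p^{2m+1}}'$, identified with the unique subgroup of order $p$ of $B$, so $|G|=p^{2m+2}$ and $G'=\Phi(G)\cong\mathbb{Z}_p$. Since $G/G'=G/\Phi(G)$ is elementary abelian of rank $d(G)=2m+1$, Corollary~\ref{8} gives $|\mathcal{M}(G^{ab})|=p^{\chi_2(2m+1)}=p^{2m^2+m}$. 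By Lemma~\ref{20}, the hypothesis (no extra elementary abelian direct factor) forces $Z(G)\cong\mathbb{Z}_{p^2}$, and because $G'\subseteq Z(G)$ the quotient $G/Z(G)$ is elementary abelian of rank $2m$, so Corollary~\ref{8} yields $|\mathcal{M}(G/Z(G))|=p^{\chi_2(2m)}=p^{2m^2-m}$ and $\mathcal{M}(Z(G))=\mathcal{M}(\mathbb{Z}_{p^2})=0$.

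For the lower bound, apply Proposition~\ref{5} to the central subgroup $G'$: the sequence
$G'\wedge G\to\mathcal{M}(G)\to\mathcal{M}(G^{ab})\to G'\cap\gamma_2(G)\to 1$
is exact, and $G'\cap\gamma_2(G)=G'$ has order $p$; hence the image of $\mathcal{M}(G)$ in $\mathcal{M}(G^{ab})$ has order $p^{2m^2+m}/p=p^{2m^2+m-1}$, and therefore $|\mathcal{M}(G)|\ge p^{2m^2+m-1}$.

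For the upper bound, the decisive point is to apply Theorem~\ref{4} with $B$ taken to be the \emph{whole} center $Z(G)$ rather than $G'$ --- this is what makes the estimate tight, because $\mathcal{M}(Z(G))$ is then trivial (a cyclic group has no Schur multiplier), whereas the choice $B=G'$ would lose too much through the tensor term. This gives that $|\mathcal{M}(G)|\,|\gamma_2(G)\cap Z(G)|$ divides $|\mathcal{M}(G/Z(G))|\,|\mathcal{M}(Z(G))|\,|Z(G)\otimes(G/Z(G))^{ab}|$, where $\gamma_2(G)\cap Z(G)=G'$ has order $p$ and, as a tensor product of abelian groups, $Z(G)\otimes(G/Z(G))^{ab}\cong\mathbb{Z}_{p^2}\otimes\mathbb{Z}_p^{(2m)}\cong\mathbb{Z}_p^{(2m)}$ has order $p^{2m}$. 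Hence $|\mathcal{M}(G)|\cdot p$ divides $p^{2m^2-m}\cdot p^{2m}=p^{2m^2+m}$, so $|\mathcal{M}(G)|\le p^{2m^2+m-1}$, and combined with the lower bound this yields $|\mathcal{M}(G)|=p^{2m^2+m-1}$. The only genuinely load-bearing idea is the choice $B=Z(G)$ in Theorem~\ref{4}; I expect the remaining work to be mild bookkeeping --- verifying that $\gamma_2(G)\cap Z(G)$ contributes exactly one factor $p$ in each of the two applications (which reduces to $G'\subseteq Z(G)$ with $|G'|=p$), and pinning down the three abelian groups $Z(G)$, $G^{ab}$ and $G/Z(G)$ together with the elementary tensor computation.
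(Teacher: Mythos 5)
Your argument is correct, and it is worth noting that the paper itself supplies no proof of this proposition at all: it is quoted verbatim from \cite[Lemma 2.9]{151}, where (judging from that paper's subject) the computation is carried out through the non-abelian tensor/exterior square of $G$. Your derivation is therefore a genuinely different, and self-contained, route built only from tools already present in this paper. The sandwich works exactly as you say: the exact sequence of Proposition \ref{5} with the central subgroup $G'$ gives $|\mathcal{M}(G)|\geq |\mathcal{M}(\mathbb{Z}_p^{(2m+1)})|/|G'\cap\gamma_2(G)|=p^{\chi_2(2m+1)-1}=p^{2m^2+m-1}$, and Theorem \ref{4} applied with $B=Z(G)\cong\mathbb{Z}_{p^2}$ gives $|\mathcal{M}(G)|\cdot p \mid p^{\chi_2(2m)}\cdot 1\cdot p^{2m}=p^{2m^2+m}$; all the auxiliary identifications ($|G|=p^{2m+2}$, $G^{ab}\cong\mathbb{Z}_p^{(2m+1)}$, $Z(G)=B$, $G/Z(G)\cong\mathbb{Z}_p^{(2m)}$, $\mathbb{Z}_{p^2}\otimes\mathbb{Z}_p^{(2m)}\cong\mathbb{Z}_p^{(2m)}$) check out. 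You correctly identify the one load-bearing choice, namely taking $B=Z(G)$ rather than $B=G'$ in Theorem \ref{4} so that the $\mathcal{M}^{(c)}(B)$ factor vanishes; with $B=G'$ the tensor term $G'\otimes G^{ab}$ would contribute $p^{2m+1}$ against a quotient multiplier of order $p^{2m^2+m}$, which is too lossy. An alternative in the spirit of the paper's own Proposition \ref{221} would be to use non-capability of $G$ to get $G'\subseteq Z^{*}(G)$ and then injectivity of $\mathcal{M}(G)\to\mathcal{M}(G^{ab})$ via Lemma \ref{41}; your version avoids having to compute the epicentre, at the cost of invoking Theorem \ref{4}. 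Either way the value $p^{2m^2+m-1}$ comes out, so the proposal stands as a complete proof.
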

The following proposition will be used in the next investigation.
\begin{prop}\label{221}
Let $ G=E_{p^{2m+1}}\cdot B$ such that $ B\cong \mathbb{Z}_{p^2}.$ Then $\big{|}\mathcal{M}^{c}(G)\big{|}=p^{\chi_{c+1}(2m)} $, for all $ c\geq 2 $.
\end{prop}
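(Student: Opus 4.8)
The plan is to identify $\mathcal{M}^{(c)}(G)$ with $\mathcal{M}^{(c)}(G/G')$ and then to read off its order from Corollary \ref{8}. First I would record the structure of $G=E_{p^{2m+1}}\cdot B$ with $B\cong\mathbb{Z}_{p^2}$: in the central product the amalgamated subgroup is the unique subgroup of order $p$ of $B$, which is identified with $(E_{p^{2m+1}})'=G'$; hence $G'=\Phi(G)\cong\mathbb{Z}_p$, $Z(G)=B\cong\mathbb{Z}_{p^2}$, $G$ is nilpotent of class $2$, $|G|=p^{2m+2}$, and the abelianization $G/G'\cong\mathbb{Z}_p^{(2m+1)}$ is elementary abelian (this is the second type of generalized extra-special $p$-group appearing in Lemma \ref{20}). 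In particular $\gamma_{c+1}(G)=1$ for every $c\geq 2$.

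Next I would obtain surjectivity of the canonical map. Applying Proposition \ref{5} to the central subgroup $B=G'$ yields the exact sequence
\[
G'\wedge_c G\longrightarrow \mathcal{M}^{(c)}(G)\longrightarrow \mathcal{M}^{(c)}(G/G')\longrightarrow G'\cap\gamma_{c+1}(G)\longrightarrow 1,
\]
and since $\gamma_{c+1}(G)=1$ the right-hand term is trivial, so $\mathcal{M}^{(c)}(G)\longrightarrow\mathcal{M}^{(c)}(G/G')$ is onto.

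The core of the argument is the opposite inclusion: one must show that the image of $G'\wedge_c G$ in $\mathcal{M}^{(c)}(G)$ is trivial, equivalently that $\mathcal{M}^{(c)}(G)\to\mathcal{M}^{(c)}(G/G')$ is a monomorphism. For this I would use that $G$ is not capable. Since $Z(G)\cong\mathbb{Z}_{p^2}$ is not elementary abelian, $G$ is isomorphic neither to $E_1\times\mathbb{Z}_p^{(n-3)}$ nor to $D_8\times\mathbb{Z}_2^{(n-3)}$, so Proposition \ref{21} shows $G$ is not capable, i.e. $Z_1^*(G)\neq 1$. As $Z_1^*(G)\leq Z(G)\cong\mathbb{Z}_{p^2}$ and every nontrivial subgroup of $\mathbb{Z}_{p^2}$ contains its subgroup of order $p$, we get $G'\leq Z_1^*(G)$, hence $G'\leq Z_c^*(G)$ by Proposition \ref{171}(iii); then Proposition \ref{171}(ii) forces $\mathcal{M}^{(c)}(G)\to\mathcal{M}^{(c)}(G/G')$ to be injective. (Alternatively one can bypass Proposition \ref{21} and deduce $G'\leq Z_1^*(G)$ directly from Proposition \ref{22} and Lemma \ref{41} by comparing orders in the $c=1$ instance of the sequence above.) Combining injectivity with the surjectivity of the previous paragraph gives $\mathcal{M}^{(c)}(G)\cong\mathcal{M}^{(c)}(G/G')\cong\mathcal{M}^{(c)}(\mathbb{Z}_p^{(2m+1)})$, and Corollary \ref{8} evaluates the right-hand side explicitly. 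The step I expect to be the genuine obstacle is this last one — proving that the connecting homomorphism $G'\wedge_c G\to\mathcal{M}^{(c)}(G)$ vanishes, i.e. that $G'$ sits inside the $c$-th epicentre of $G$ — since everything else is bookkeeping with the structure of $G$, whereas here the non-capability of $G$, and hence the classification of capable generalized extra-special $p$-groups, is what is really being used.
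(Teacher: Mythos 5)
Your argument follows essentially the same route as the paper's: non-capability of $G$ (via Proposition \ref{21}) forces $G'\subseteq Z^*(G)\subseteq Z_c^*(G)$, Proposition \ref{171}(ii) then gives injectivity of $\mathcal{M}^{(c)}(G)\to\mathcal{M}^{(c)}(G^{ab})$, and the class-two hypothesis kills $\gamma_{c+1}(G)$ so that Proposition \ref{5} gives surjectivity; in fact you are more careful than the paper, which cites only the injectivity half of Proposition \ref{171} and asserts the isomorphism outright, and which states $Z^*(G)=G'$ where only the containment $G'\subseteq Z^*(G)$ (your argument via the unique minimal subgroup of $Z(G)\cong\mathbb{Z}_{p^2}$) is needed or proved.

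The one point you must not leave implicit is the final evaluation. Since $\Phi(G)=G'$ has order $p$ and $|G|=p^{2m+2}$, the quotient $G^{ab}=G/\Phi(G)$ is elementary abelian of rank $2m+1$, so Corollary \ref{8} yields $\big|\mathcal{M}^{(c)}(G)\big|=p^{\chi_{c+1}(2m+1)}$, not $p^{\chi_{c+1}(2m)}$ as the proposition asserts. The same value comes out of the paper's own proof (which also passes through $\mathcal{M}^{(c)}(G^{ab})$ and Theorem \ref{15}(i)), and it is consistent with the $c=1$ count $p^{2m^2+m-1}=p^{\chi_{2}(2m+1)-1}$ of Proposition \ref{22}, where the lost factor of $p$ is exactly $|G'\cap\gamma_2(G)|$ and disappears once $c\geq 2$. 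So the discrepancy lies in the statement of the proposition rather than in your reasoning; had you actually carried out the last computation instead of writing that Corollary \ref{8} ``evaluates the right-hand side explicitly,'' you would have caught it.
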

\begin{proof}
By Proposition \ref{21}, since $G$ is a non-capable generalized extra-special $p$-group, $Z^{*}(G)=G'$. Now by Proposition \ref{171}, the homomorphism $ \mathcal{M}^{(c)}(G)\longrightarrow \mathcal{M}^{(c)}(G^{ab})$ is an isomorphism. Therefore $ \Big{|} \mathcal{M}^{(c)}(G)\Big{|}=\Big{|} \mathcal{M}^{(c)}(G^{ab})\Big{|}.$ Now the result holds by Theorem \ref{15}.
\end{proof}
The following theorem determines the $c$-capability of the generalized extra-special $p$-groups.
\begin{thm}\label{23}
Let $ G $ be a generalized extra-special $p$-group and $ |G|=p^n $. $ G $ is $c$-capable if and only if $ G=E_1\times  \mathbb{Z}_p^{(n-3)}$  or $ G=D_8\times \mathbb{Z}_2^{(n-3)}$.
\end{thm}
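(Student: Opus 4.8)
The plan is to split the proof into the two directions. One direction is easy: non-capable groups are never $c$-capable (as observed in the excerpt, if $G \cong E/Z_c(E)$ then $G \cong (E/Z_{c-1}(E))/Z(E/Z_{c-1}(E))$ is capable), so by Proposition \ref{21} any $c$-capable generalized extra-special $p$-group must be one of $E_1 \times \mathbb{Z}_p^{(n-3)}$ or $D_8 \times \mathbb{Z}_2^{(n-3)}$. This disposes of the "only if" part immediately, and also of all groups of the shapes in Lemma \ref{20} except the two exceptional families, using Proposition \ref{221} for the central-product case.

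For the "if" direction I would exhibit explicit witnesses showing $G = E_1 \times \mathbb{Z}_p^{(n-3)}$ and $G = D_8 \times \mathbb{Z}_2^{(n-3)}$ are $c$-capable, just as was done for the extra-special case in Theorem \ref{18}. First I would handle the two-generator building blocks: $D_8 = D_{2^{c+1}}/Z_c(D_{2^{c+1}})$ (already used in Theorem \ref{18}) and, similarly, $E_1 \cong E/Z_c(E)$ for a suitable $p$-group $E$ of class $c+1$ generated by two elements — for instance a quotient of the free nilpotent group of class $c+1$ on two generators with the appropriate $p$-th power relations, or the relevant $p$-group of maximal class. Then I would show that direct-multiplying a $c$-capable group by an elementary abelian $p$-group preserves $c$-capability: if $H \cong E/Z_c(E)$ then $H \times \mathbb{Z}_p^{(r)} \cong (E \times \mathbb{Z}_p^{(r)})/Z_c(E \times \mathbb{Z}_p^{(r)})$, since $Z_c(E \times \mathbb{Z}_p^{(r)}) = Z_c(E) \times \mathbb{Z}_p^{(r)}$ and the $\mathbb{Z}_p^{(r)}$ factor gets absorbed. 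Combining these two observations gives the $c$-capability of both families.

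Alternatively, and perhaps more in the spirit of the rest of the paper, one can give a multiplier-theoretic argument using Proposition \ref{171}(ii): it suffices to find, for each nontrivial normal subgroup $N$ of $G$, a proof that $\mathcal{M}^{(c)}(G) \to \mathcal{M}^{(c)}(G/N)$ is not injective, i.e. that $Z_c^*(G) = 1$. Here one would use the order computations already available — $|\mathcal{M}^{(c)}(E_1 \times \mathbb{Z}_p^{(n-3)})| = p^{\chi_{c+1}(n-1)+\chi_{c+2}(2)}$ from Theorem \ref{17}/Lemma \ref{16}, together with the formula for $\mathcal{M}^{(c)}$ of direct products (Proposition \ref{61}) and of abelian $p$-groups (Theorem \ref{15}) — and compare with $|\mathcal{M}^{(c)}(G/N)|$ for each minimal $N$ (either $N = G'$, or $N$ a line in the elementary abelian direct factor, or a diagonal-type subgroup of order $p$). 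The point is that killing any such $N$ strictly decreases the multiplier order below what injectivity would force, exactly as in Theorem \ref{18}.

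The main obstacle is the bookkeeping in the second approach: one has to enumerate the conjugacy-types of central (hence normal, since $G' $ has order $p$ and $G/G'$ is elementary abelian, nontrivial normal subgroups of order $p$ are central) subgroups of order $p$ in $G = H \times \mathbb{Z}_p^{(n-3)}$ — those inside $H$, those inside the elementary abelian factor, and the "diagonal" ones — and for each one compute $|\mathcal{M}^{(c)}(G/N)|$ via Propositions \ref{61} and \ref{7}/Theorem \ref{15}, checking in every case that the quotient map on $\mathcal{M}^{(c)}$ loses order. The first approach sidesteps this entirely, so I would lead with it: construct $E$ with $E/Z_c(E) \cong E_1$ explicitly, note $D_{2^{c+1}}/Z_c(D_{2^{c+1}}) \cong D_8$, and then invoke the direct-product-with-elementary-abelian stability lemma to finish. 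The genuinely new content is just verifying that $E_1$ itself is $c$-capable; once that is in hand, the rest is formal.
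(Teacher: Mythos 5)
Your ``only if'' direction and your second (multiplier-theoretic) route for the ``if'' direction are essentially the paper's argument: the paper reduces to showing that for every nontrivial normal subgroup $N$ of $G=E_1\times\mathbb{Z}_p^{(n-3)}$ (resp.\ $D_8\times\mathbb{Z}_2^{(n-3)}$) the map $\mathcal{M}^{(c)}(G)\to\mathcal{M}^{(c)}(G/N)$ fails to be injective, by comparing $|\mathcal{M}^{(c)}(G)|=p^{\chi_{c+1}(n-1)+\chi_{c+2}(2)}$ from Theorem \ref{17} with a bound on $|\mathcal{M}^{(c)}(G/N)|$, and then invokes Proposition \ref{171}. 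Your version of this is in fact more careful than the paper's, which asserts that $G/N$ is always abelian; that holds only when $N\supseteq G'$, and the remaining minimal normal subgroups (lines in the elementary abelian factor and diagonal ones) do require the non-abelian bound of Lemma \ref{16}, exactly the bookkeeping you describe.

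However, the route you say you would lead with contains a genuine error. The ``direct-product-with-elementary-abelian stability lemma'' fails for precisely the reason visible in your own formula: since $Z_c(E\times\mathbb{Z}_p^{(r)})=Z_c(E)\times\mathbb{Z}_p^{(r)}$, the quotient $(E\times\mathbb{Z}_p^{(r)})/Z_c(E\times\mathbb{Z}_p^{(r)})$ is isomorphic to $E/Z_c(E)\cong H$, not to $H\times\mathbb{Z}_p^{(r)}$ --- the elementary abelian factor is not ``absorbed'', it is annihilated by the quotient. So producing $E$ with $E/Z_c(E)\cong E_1$ does nothing, by itself, for $E_1\times\mathbb{Z}_p^{(n-3)}$; you would need a different witness $E'$ with $E'/Z_c(E')\cong E_1\times\mathbb{Z}_p^{(n-3)}$, and no such construction (nor any valid general product principle for $c$-capability) is offered. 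Since the proposal explicitly defers the case analysis of the second approach in favour of the first, it does not, as written, establish the ``if'' direction; promoting your ``alternative'' argument to the main one and carrying out the check on the three types of central subgroups of order $p$ is what is actually required, and is in substance what the paper does.
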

\begin{proof}
Let $G=E_1\times \mathbb{Z}_p^{(n-3)}$ be a generalized extra-special $p$-group. We will show there is no nontrivial normal subgroup of $G$ for which the natural homomorphism $\mathcal{M}^{(c)}(G)\longrightarrow  \mathcal{M}^{(c)}(G/N)$ is injective. Let $N$ be a nontrivial normal subgroup of $G$, so $G/N$ is an abelian $p$-group of order at most $p^{n-1}$ and hence Theorem \ref{15} $(i)$ shows $\big{|} \mathcal{M}^{(c)}(G/N)\big{|} \leq p^{\chi_{c+1}(n-1)}$. Since $\big{|} \mathcal{M}^{(c)}(G)\big{|}=p^{\chi_{c+1}(n-1)+\chi_{c+2}(2)}$ by Theorem \ref{17},
$\mathcal{M}^{(c)}(G)\longrightarrow \mathcal{M}^{(c)}(G/N)$ fails to be injective and the result holds.  Now Proposition \ref{171} shows $G=E_1\times \mathbb{Z}_p^{(n-3)}$ is $c$-capable. The proof for the case $G=D_8\times \mathbb{Z}_{2}^{(n-3)}$ is completely similar expect that $\mathcal{M}^{(c)}(D_8)\cong \mathbb{Z}_{4}\times \mathbb{Z}_{2}^{(\chi_{c+1}(2)-1)} $.
\end{proof}
\begin{cor}\label{24}
Let $ G $ be a generalized extra-special $p$-group. Then $ G $ is $c$-capable if and only if $  G$ is capable.
\end{cor}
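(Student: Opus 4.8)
The plan is to derive the corollary directly from Theorem \ref{23} together with Proposition \ref{21}, using only the general fact recorded just before Theorem \ref{18} that every $c$-capable group is capable. First I would recall that implication: if $G$ is $c$-capable, say $G\cong E/Z_c(E)$, then setting $E'=E/Z_{c-1}(E)$ one has $G\cong E'/Z(E')$, so $G$ is capable. Hence $c$-capability always implies capability, and only the reverse implication needs attention in the context of generalized extra-special $p$-groups.

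For the converse, I would invoke Proposition \ref{21}, which says that a generalized extra-special $p$-group $G$ (of order $p^n$) is capable if and only if $G=E_1\times\mathbb{Z}_p^{(n-3)}$ or $G=D_8\times\mathbb{Z}_2^{(n-3)}$. But Theorem \ref{23} asserts that $G$ is $c$-capable if and only if $G$ is one of exactly those same two groups. Therefore the two classes coincide: $G$ is capable iff $G$ is $c$-capable. No separate calculation is required; the content has already been isolated in Theorem \ref{23}, whose proof in turn rests on the computation $|\mathcal{M}^{(c)}(G)|=p^{\chi_{c+1}(n-1)+\chi_{c+2}(2)}$ from Theorem \ref{17} (and the corresponding statement for $D_8$) against the bound $|\mathcal{M}^{(c)}(G/N)|\leq p^{\chi_{c+1}(n-1)}$ for proper quotients coming from Theorem \ref{15}(i), combined with the injectivity criterion of Proposition \ref{171}.

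In short, the proof is a one-line synthesis: ``By Theorem \ref{23}, $G$ is $c$-capable iff $G=E_1\times\mathbb{Z}_p^{(n-3)}$ or $G=D_8\times\mathbb{Z}_2^{(n-3)}$; by Proposition \ref{21} these are precisely the capable generalized extra-special $p$-groups; hence $G$ is $c$-capable iff $G$ is capable.'' I anticipate no real obstacle here — the only thing to be careful about is that the list of exceptional groups in Theorem \ref{23} matches verbatim the list in Proposition \ref{21}, which it does, so the equivalence is immediate. The genuine work lies entirely in Theorem \ref{17} and Theorem \ref{23}; this corollary merely packages their conclusions.
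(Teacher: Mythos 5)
Your proposal is correct and matches the paper's (implicit) argument exactly: the corollary is stated without proof precisely because it is the immediate synthesis of Theorem \ref{23} with Proposition \ref{21}, together with the observation (already made before Theorem \ref{18}) that $c$-capability implies capability. Nothing further is needed.
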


\end{document}